\newcommand{\algfor}[1]{\textbf{for} #1\\\algbegin}
\newtheorem{thm}{Theorem}
\newtheorem{cor}[thm]{Corollary}
\newtheorem{prop}[thm]{Proposition}
\newtheorem{rem}[thm]{Remark}
\numberwithin{equation}{section}
\numberwithin{thm}{section}
\DeclareMathAlphabet{\mathsfsl}{OT1}{cmss}{m}{sl}
\newcommand{\term}{\emph}
\newcommand{\cnst}[1]{\mathrm{#1}}
\renewcommand{\phi}{\varphi}
\newcommand{\eps}{\varepsilon}
\newcommand{\econst}{\mathrm{e}}
\newcommand{\onevct}{\mathbf{e}}
\newcommand{\Id}{\mathbf{I}}
\newcommand{\Rspace}[1]{\mathbb{R}^{#1}}
\newcommand{\Cspace}[1]{\mathbb{C}^{#1}}
\newcommand{\abs}[1]{\left\vert {#1} \right\vert}
\newcommand{\abssq}[1]{{\abs{#1}}^2}
\newcommand{\argmin}{\operatorname*{arg\; min}}
\newcommand{\psdge}{\succcurlyeq}
\newcommand{\psdle}{\preccurlyeq}
\newcommand{\Expect}{\operatorname{\mathbb{E}}}
\newcommand{\vct}[1]{\bm{#1}}
\newcommand{\mtx}[1]{\bm{#1}}
\newcommand{\adj}{*}
\newcommand{\psinv}{\dagger}
\newcommand{\rank}{\operatorname{rank}}
\newcommand{\strank}{\operatorname{st.rank}}
\newcommand{\diag}{\operatorname{diag}}
\newcommand{\trace}{\operatorname{trace}}
\newcommand{\ip}[2]{\left\langle {#1},\ {#2} \right\rangle}
\newcommand{\absip}[2]{\abs{\ip{#1}{#2}}}
\newcommand{\norm}[1]{\left\Vert {#1} \right\Vert}
\newcommand{\normsq}[1]{\norm{#1}^2}
\newcommand{\enorm}[1]{\norm{#1}_2}
\newcommand{\enormsq}[1]{\enorm{#1}^2}
\newcommand{\fnorm}[1]{\norm{#1}_{\mathrm{F}}}
\newcommand{\fnormsq}[1]{\fnorm{#1}^2}
\newcommand{\pnorm}[2]{\norm{#2}_{#1}}
\newcommand{\infnorm}[1]{\norm{#1}_{\infty}}
\newcommand{\triplenorm}[1]{\left\vert\!\left\vert\!\left\vert {#1} \right\vert\!\right\vert\!\right\vert}
\newcommand{\conv}{\operatorname{conv}}
\newcommand{\subjto}{\quad\text{subject to}\quad}
\newcommand{\bigO}{\mathrm{O}}
\begin{document}

\title[Column Subset Selection]
{Column Subset Selection, Matrix Factorization,\\ and Eigenvalue Optimization}

\author{J.~A.~Tropp}

\thanks{JAT is with Applied and Computational Mathematics, MC 217-50, California Inst.~Technology, Pasadena, CA 91125-5000.
E-mail: \url{jtropp@acm.caltech.edu}.
Supported in part by ONR award no. N000140810883.}

\date{26 June 2008}

\begin{abstract}
Given a fixed matrix, the problem of column subset selection requests a column submatrix that has favorable spectral properties.  Most research from the algorithms and numerical linear algebra communities focuses on a variant called rank-revealing {\sf QR}, which seeks a well-conditioned collection of columns that spans the (numerical) range of the matrix.  The functional analysis literature contains another strand of work on column selection whose algorithmic implications have not been explored.  In particular, a celebrated result of Bourgain and Tzafriri demonstrates that each matrix with normalized columns contains a large column submatrix that is exceptionally well conditioned.  Unfortunately, standard proofs of this result cannot be regarded as algorithmic.

This paper presents a randomized, polynomial-time algorithm that produces the submatrix promised by Bourgain and Tzafriri.  The method involves random sampling of columns, followed by a matrix factorization that exposes the well-conditioned subset of columns.  This factorization, which is due to Grothendieck, is regarded as a central tool in modern functional analysis.  The primary novelty in this work is an algorithm, based on eigenvalue minimization, for constructing the Grothendieck factorization.  These ideas also result in a novel approximation algorithm for the $(\infty, 1)$ norm of a matrix, which is generally {\sf NP}-hard to compute exactly.  As an added bonus, this work reveals a surprising connection between matrix factorization and the famous {\sc maxcut} semidefinite program.
\end{abstract}



\maketitle


\newpage

\section{Introduction}

\term{Column subset selection} refers to the challenge of extracting from a matrix a column submatrix that has some distinguished property.  These properties commonly involve conditions on the spectrum of the submatrix.  The most familiar example is probably rank-revealing {\sf QR}, which seeks a well-conditioned collection of columns that spans the (numerical) range of the matrix \cite{GE96:Efficient-Algorithms}.

The literature on geometric functional analysis contains several fundamental theorems on column subset selection that have not been discussed by the algorithms community or the numerical linear algebra community.  These results are phrased in terms of the \term{stable rank} of a matrix:
$$
\strank(\mtx{A}) = \frac{\fnormsq{\mtx{A}}}{\normsq{\mtx{A}}}
$$
where $\fnorm{\cdot}$ is the Frobenius norm and $\norm{\cdot}$ is the spectral norm.  The stable rank can be viewed as an analytic surrogate for the algebraic rank.  Indeed, express the two norms in terms of singular values to obtain the relation
$$
\strank(\mtx{A}) \leq \rank(\mtx{A}).
$$
In this bound, equality occurs (for example) when the columns of $\mtx{A}$ are identical or when the columns of $\mtx{A}$ are orthonormal.  As we will see, the stable rank is tightly connected with the number of (strongly) linearly independent columns we can extract from a matrix.

Before we continue, let us instate some notation.  We say that a matrix is \term{standardized} when its columns have unit $\ell_2$ norm.  The $j$th column of a matrix $\mtx{A}$ is denoted by $\vct{a}_j$.  For a subset $\tau$ of column indices, we write $\mtx{A}_{\tau}$ for the column submatrix indexed by $\tau$.  Likewise, given a square matrix $\mtx{H}$, the notation $\mtx{H}_{\tau \times \tau}$ refers to the principal submatrix whose rows and columns are listed in $\tau$.  The pseudoinverse $\mtx{D}^\psinv$ of a diagonal matrix $\mtx{D}$ is formed by reciprocating the nonzero entries.
As usual, we write $\pnorm{p}{\cdot}$ for the $\ell_p$ vector norm.  The \term{condition number} of a matrix is the quantity
$$
\kappa(\mtx{A}) =
\max\left\{ \frac{\enorm{\mtx{A}\vct{x}}}{\enorm{\mtx{A}\vct{y}}} :
	\enorm{\vct{x}} = \enorm{\vct{y}} = 1 \right\}.
$$
Finally, upright letters $(\cnst{c}, \cnst{C}, \cnst{K}, \dots)$ refer to positive, universal constants that may change from appearance to appearance.

The first theorem, due to Kashin and Tzafriri, shows that each matrix with standardized columns contains a large column submatrix that has small spectral norm~\cite[Thm.~2.5]{Ver01:Johns-Decompositions}.


\begin{thm}[Kashin--Tzafriri] \label{thm:kt}
Suppose $\mtx{A}$ is standardized.  Then there is a set $\tau$ of column indices for which
$$
\abs{\tau} \geq \strank(\mtx{A})
\quad\text{and}\quad
\norm{\mtx{A}_{\tau}} \leq \cnst{C}.
$$
\end{thm}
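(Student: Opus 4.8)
The plan is to take $\tau$ to be a random subset of the columns and to control the top eigenvalue of $\mtx{A}_\tau^{\adj}\mtx{A}_\tau$ by matrix concentration. Write $n$ for the number of columns of $\mtx{A}$. Since $\mtx{A}$ is standardized, $\fnormsq{\mtx{A}} = n$, so $\strank(\mtx{A}) = n/\normsq{\mtx{A}}$, and in particular $1 \leq \strank(\mtx{A}) \leq n$. If $\normsq{\mtx{A}} \leq \cnst{C}$ we simply take $\tau$ to be all of the columns, so assume $\normsq{\mtx{A}}$ is large. Draw independent selectors $\delta_1,\dots,\delta_n \in \{0,1\}$ of common mean $\delta \asymp 1/\normsq{\mtx{A}}$ and put $\tau = \{\, j : \delta_j = 1 \,\}$. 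Then $\Expect \abs{\tau} = \delta n \asymp \strank(\mtx{A})$, and --- tuning the implicit constant in $\delta$ --- a scalar Chernoff bound gives $\abs{\tau} \geq \strank(\mtx{A})$ with probability bounded away from zero. The substance of the argument is the matching bound $\norm{\mtx{A}_\tau} \leq \cnst{C}$.

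For the latter, the nonzero eigenvalues of $\mtx{A}_\tau^{\adj}\mtx{A}_\tau$ coincide with those of $\mtx{A}_\tau\mtx{A}_\tau^{\adj} = \sum_j \delta_j\, \vct{a}_j\vct{a}_j^{\adj}$, a sum of independent positive-semidefinite matrices, each of norm at most $1$, whose mean $\delta\, \mtx{A}\mtx{A}^{\adj}$ has norm $\delta \normsq{\mtx{A}} \asymp 1$. I would bound $E := \Expect \norm{ \sum_j \delta_j\, \vct{a}_j\vct{a}_j^{\adj} }$ by splitting off the mean, symmetrizing the fluctuation, and applying Rudelson's lemma (a consequence of the noncommutative Khintchine inequality; a matrix Chernoff or Bernstein inequality would serve as well). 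Because $\enorm{\vct{a}_j} = 1$, this yields a self-improving estimate of the shape $E \leq \delta \normsq{\mtx{A}} + \cnst{C}\sqrt{\log N}\,\sqrt{E}$, where $N$ is the number of columns --- or, after compressing everything to $\range(\mtx{A})$, the rank. Solving the quadratic gives $E \lesssim 1 + \log N$; feeding this through Markov's inequality and combining with the cardinality estimate by a union bound exhibits a single $\tau$ with $\abs{\tau}$ of the right order and $\norm{\mtx{A}_\tau}$ controlled. (Equivalently, one can phrase the argument as a paving of the columns into few blocks of controlled spectral norm.)

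The difficulty is the factor $\sqrt{\log N}$ that standard matrix concentration forces into the fluctuation: this is exactly what stands between the naive bound and the sharp statement, which demands a \emph{universal} $\cnst{C}$ together with $\abs{\tau} \geq \strank(\mtx{A})$ with constant $1$. One can spend the logarithm on the size of $\tau$ --- obtaining only $\abs{\tau} \gtrsim \strank(\mtx{A})/\log N$ with $\norm{\mtx{A}_\tau} \leq \cnst{C}$ --- or on the norm --- obtaining $\abs{\tau} \gtrsim \strank(\mtx{A})$ but only $\norm{\mtx{A}_\tau} \lesssim \sqrt{\log N}$ --- and it is not dislodged by iterative peeling (extract a good subset, recurse on the residual, whose stable rank drops by at most the number of columns removed, and union the pieces), since a union of $\ell$ blocks of bounded norm has norm $\bigO(\sqrt{\ell}\,)$. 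Removing the logarithm genuinely requires the more delicate selector argument of Bourgain and Tzafriri, which discards offending columns group by group and iterates, or a barrier/potential-function selection in the spirit of restricted invertibility. I expect essentially all of the work, and all of the cleverness, to sit in that last step.
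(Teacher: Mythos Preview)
Your diagnosis is accurate: direct matrix concentration on the spectral norm cannot deliver the theorem, and the $\sqrt{\log N}$ is genuinely there.  The paper's own ``double identity'' example $\mtx{A} = [\,\Id \mid \Id\,]$ makes this concrete: random selection of $\Omega(\sqrt{m})$ columns almost surely duplicates a column, so $\norm{\mtx{A}_\sigma}$ does not drop at all.  What you are missing, however, is not a more refined selector or barrier argument on the spectral norm but a \emph{change of norm} before the random step.

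The paper bounds a different norm of the random submatrix, the $(\infty,2)$ operator norm
$\pnorm{\infty\to2}{\mtx{A}_\sigma} = \max\{ \enorm{\mtx{A}_\sigma \vct{x}} : \infnorm{\vct{x}} = 1 \}$,
and this norm \emph{is} reduced by random selection with no logarithmic penalty: for $\abs{\sigma} = s \approx 2\,\strank(\mtx{A})$ one has $\Expect \pnorm{\infty\to2}{\mtx{A}_\sigma} \leq \cnst{C}\sqrt{s}$ (Theorem~\ref{thm:rdm-inf2}).  The reason the log disappears is that, after centering and symmetrizing, a Rademacher comparison strips the absolute values from $\sum_j \eps_j \delta_j \absip{\vct{a}_j}{\vct{x}}$ and the whole expression collapses to $\Expect\enorm{\sum_j \eps_j \delta_j \vct{a}_j}$, a \emph{scalar} second-moment computation---no noncommutative Khintchine, hence no $\sqrt{\log N}$.

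The second step converts the $(\infty,2)$ bound into a spectral bound on a large subset, and it is deterministic.  By Pietsch factorization (Theorem~\ref{thm:pietsch}), any $\mtx{B}$ with $s$ columns admits $\mtx{B} = \mtx{T}\mtx{D}$ with $\mtx{D}$ nonnegative diagonal, $\trace(\mtx{D}^2) = 1$, and $\norm{\mtx{T}} \leq \cnst{K_P}\,\pnorm{\infty\to2}{\mtx{B}}$.  Markov's inequality on the entries of $\mtx{D}^2$ gives $\tau = \{ j : d_{jj}^2 \leq 2/s \}$ with $\abs{\tau} \geq s/2$, and then $\norm{\mtx{B}_\tau} \leq \norm{\mtx{T}}\cdot\norm{\mtx{D}_\tau} \leq \cnst{K_P}\sqrt{2/s}\,\pnorm{\infty\to2}{\mtx{B}} \leq \cnst{C}$.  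So the cleverness you anticipated is not an iterated Bourgain--Tzafriri pruning; it is a single factorization applied \emph{after} the random step, and the logarithm never enters because the random step is run in a norm whose fluctuation is governed by a vector rather than a matrix.
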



In fact, much more is true.  Combining Theorem~\ref{thm:kt} with the celebrated restricted invertibility result of Bourgain and Tzafriri~\cite[Thm.~1.2]{BT87:Invertibility-Large}, we find that every standardized matrix contains a large column submatrix whose \emph{condition number} is small.  



\begin{thm}[Bougain--Tzafriri] \label{thm:bt}
Suppose $\mtx{A}$ is standardized.  Then there is a set $\tau$ of column indices for which
$$
\abs{\tau} \geq \cnst{c} \cdot \strank(\mtx{A})
\quad\text{and}\quad
\kappa(\mtx{A}_{\tau}) \leq \sqrt{3}.
$$
\end{thm}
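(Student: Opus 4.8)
The plan is to bootstrap Theorem~\ref{thm:kt}.  That result already produces a large column set on which the \emph{largest} singular value is under control, so the remaining task is to descend to a further subcollection on which the \emph{smallest} singular value is bounded away from zero as well --- equivalently, one on which the Gram matrix is close to the identity.  First I would apply Theorem~\ref{thm:kt} to $\mtx{A}$ to obtain a set $\sigma$ of column indices with $\abs{\sigma} \ge \strank(\mtx{A})$ and $\norm{\mtx{A}_{\sigma}} \le \cnst{C}$.  Since the columns of $\mtx{A}_{\sigma}$ still have unit $\ell_2$ norm, the Gram matrix $\mtx{G} \defby \mtx{A}_{\sigma}^{\adj}\mtx{A}_{\sigma}$ has every diagonal entry equal to $1$; writing $\mtx{H} \defby \mtx{G} - \Id$ for its hollow (off-diagonal) part, we have $\norm{\mtx{H}} \le \norm{\mtx{G}} + 1 = \norm{\mtx{A}_{\sigma}}^2 + 1 \le \cnst{C}^2 + 1$, a universal constant.

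Next I would invoke the Bourgain--Tzafriri restricted invertibility theorem in the form that extracts from a hollow matrix a large principal submatrix of small norm: there is a universal constant $\cnst{c}$ so that $\mtx{H}$ has a principal submatrix $\mtx{H}_{\tau \times \tau}$, indexed by some $\tau \subseteq \sigma$, with $\abs{\tau} \ge \cnst{c}\,\abs{\sigma}/\norm{\mtx{H}}^2$ and $\norm{\mtx{H}_{\tau \times \tau}} \le \half$.  Because $\norm{\mtx{H}}^2$ is a universal constant and $\abs{\sigma} \ge \strank(\mtx{A})$, this gives $\abs{\tau} \ge \cnst{c}\cdot\strank(\mtx{A})$.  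Finally, the Gram submatrix satisfies $\mtx{A}_{\tau}^{\adj}\mtx{A}_{\tau} = \Id + \mtx{H}_{\tau \times \tau}$, so every eigenvalue of $\mtx{A}_{\tau}^{\adj}\mtx{A}_{\tau}$ lies in the interval $[\,1 - \norm{\mtx{H}_{\tau \times \tau}},\ 1 + \norm{\mtx{H}_{\tau \times \tau}}\,] \subseteq [\tfrac{1}{2},\ \tfrac{3}{2}]$; hence $\kappa(\mtx{A}_{\tau})^2 = \smax(\mtx{A}_{\tau})^2/\smin(\mtx{A}_{\tau})^2 \le (3/2)/(1/2) = 3$, which is the assertion.

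The one genuinely deep ingredient is the restricted invertibility estimate, whose usual proofs (random column selection followed by an iteration, or square-function and Grothendieck-type estimates) are precisely the non-algorithmic arguments this paper aims to supplant; for the present theorem it may be cited as a black box.  Two features of the combination need care.  First, the size of the extracted set degrades like $\norm{\mtx{H}}^{-2}$, so it is essential that the spectral norm has already been pushed down to a universal constant before restricted invertibility is applied --- using it directly on $\mtx{A}$ would yield only a set of size $\gtrsim \strank(\mtx{A})/\norm{\mtx{A}}^2$, which can be far below $\strank(\mtx{A})$.  This is exactly the role of the preliminary pass through Theorem~\ref{thm:kt}.  Second, securing the \emph{precise} bound $\kappa(\mtx{A}_{\tau}) \le \sqrt{3}$, rather than merely some universal constant, forces the Gram perturbation $\norm{\mtx{H}_{\tau \times \tau}}$ below $\half$, so one must use the two-sided form of the extraction (a lower bound on $\smin$ alone would not pin the constant) and verify that the remaining constants still compose to keep $\abs{\tau}$ proportional to $\strank(\mtx{A})$.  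Coordinating these two estimates --- largely a bookkeeping matter once the two cited theorems are in hand --- is the main obstacle.
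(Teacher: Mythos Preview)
Your proposal is correct and, in fact, is precisely the derivation the paper alludes to in the introduction when it says Theorem~\ref{thm:bt} follows by ``combining Theorem~\ref{thm:kt} with the celebrated restricted invertibility result of Bourgain and Tzafriri.''  Your bookkeeping is right, and your observation that Theorem~\ref{thm:kt} must be applied \emph{first} (so that the restricted-invertibility extraction loses only a constant factor rather than a factor of $\norm{\mtx{A}}^{2}$) is exactly the point.

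The paper's own proof in Section~\ref{sec:bt}, however, takes a genuinely different route that does \emph{not} pass through Theorem~\ref{thm:kt} or invoke restricted invertibility as a black box.  Instead it mirrors the Kashin--Tzafriri argument directly at the level of the hollow Gram matrix: a random subset $\sigma$ of size $s \le \cnst{c}\,\strank(\mtx{A})$ is drawn, and a Rudelson--Vershynin estimate shows that $\pnorm{\infty\to1}{\mtx{H}_{\sigma\times\sigma}} \le s/8$ with positive probability.  Then the Grothendieck factorization $\mtx{H}_{\sigma\times\sigma} = \mtx{DTD}$ (the bilinear analogue of Pietsch) is used exactly as Pietsch was in Proposition~\ref{prop:pietsch-cols}: the set $\tau = \{j : d_{jj}^2 \le 2/s\}$ has $\abs{\tau} \ge s/2$ and $\norm{\mtx{H}_{\tau\times\tau}} \le (2\cnst{K_G}/s)\cdot(s/8) < \tfrac12$, whence $\kappa(\mtx{A}_\tau) \le \sqrt{3}$.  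So where you do ``KT then restricted invertibility,'' the paper does ``random $(\infty,1)$-norm reduction then Grothendieck factorization'' in a single pass.  The payoff of the paper's route is that every step is constructive: the Grothendieck factorization is obtained from an eigenvalue minimization (Theorem~\ref{thm:groth-eig}), so the whole argument becomes an algorithm, whereas your route leans on exactly the non-algorithmic black box the paper is trying to replace --- a point you yourself flag.
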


Theorem~\ref{thm:bt} yields the best general result~\cite[Thm.~1.1]{BT91:Problem-Kadison-Singer} on the Kadison--Singer conjecture, the major open question in operator theory.  To display its strength, let us consider two extreme examples.

\begin{enumerate} 
\item	When $\mtx{A}$ has identical columns, every collection of two or more columns is singular.  Theorem~\ref{thm:bt} guarantees a well-conditioned submatrix $\mtx{A}_\tau$ with $\abs{\tau} = 1$, which is optimal.

\item	When $\mtx{A}$ has $n$ orthonormal columns, the full matrix is perfectly conditioned.  Theorem~\ref{thm:bt} guarantees a well-conditioned submatrix $\mtx{A}_\tau$ with $\abs{\tau} \geq \cnst{c}n$, which lies within a constant factor of optimal.
\end{enumerate}
The stable rank allows Theorem~\ref{thm:bt} to interpolate between the two extremes.  Subsequent research established that the stable rank is intrinsic to the problem of finding well-conditioned submatrices.  We postpone a more detailed discussion of this point until Section~\ref{sec:discussion}.


\subsection{Contributions}

Although Theorems~\ref{thm:kt} and~\ref{thm:bt} would be very useful in computational applications, we cannot regard current proofs as constructive.  The goal of this paper is to establish the following novel, algorithmic claim.

\begin{thm}
There are randomized, polynomial-time algorithms for producing the sets guaranteed by Theorem~\ref{thm:kt} and by Theorem~\ref{thm:bt}.
\end{thm}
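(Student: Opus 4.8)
The plan is to reduce both theorems to a single statement about a randomly chosen column submatrix, and then to extract the promised index set by \emph{computing} a Grothendieck factorization via semidefinite programming. Fix $\mtx{A}$ standardized with $n$ columns, and for an index set $\tau$ abbreviate $\mtx{M}_\tau \defby \mtx{A}_\tau^\adj \mtx{A}_\tau - \Id$, the hollow Gram matrix of the submatrix. Since $\mtx{A}$ is standardized, the diagonal of $\mtx{A}_\tau^\adj \mtx{A}_\tau$ consists of ones, so the spectrum of $\mtx{A}_\tau^\adj \mtx{A}_\tau$ lies in $[\,1 - \norm{\mtx{M}_\tau},\ 1 + \norm{\mtx{M}_\tau}\,]$; in particular $\norm{\mtx{M}_\tau} \leq \half$ forces $\norm{\mtx{A}_\tau} \leq \sqrt{3/2}$ and $\kappa(\mtx{A}_\tau) \leq \sqrt{3}$ at once. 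It therefore suffices to give a randomized, polynomial-time algorithm producing a set $\tau$ with $\abs{\tau} \gtrsim \strank(\mtx{A})$ and $\norm{\mtx{M}_\tau} \leq \half$; up to the universal constants, this yields both Theorem~\ref{thm:kt} and Theorem~\ref{thm:bt}.

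First I would subsample: keep each column of $\mtx{A}$ independently with probability $\cnst{c}\,\strank(\mtx{A})/n$, obtaining $\mtx{A}_\sigma$ with $\abs{\sigma} \asymp \strank(\mtx{A})$ with high probability. The crucial estimate is that $\pnorm{\infty\to1}{\mtx{M}_\sigma} \leq \eps\abs{\sigma}$ with high probability, where $\eps$ can be made as small as we wish by shrinking $\cnst{c}$. This is exactly where the stable rank enters: for the full matrix, $\pnorm{\infty\to1}{\mtx{M}} = \max_{\vct{x},\vct{y} \in \{\pm1\}^n}\abs{\ip{\mtx{A}\vct{x}}{\mtx{A}\vct{y}} - \ip{\vct{x}}{\vct{y}}} \leq n\normsq{\mtx{A}} + n = n^2/\strank(\mtx{A}) + n$ because $\fnormsq{\mtx{A}} = n$, and the leading (``expectation'') contribution to $\pnorm{\infty\to1}{\mtx{M}_\sigma}$ is this quantity multiplied by the squared sampling probability, which is of order $\eps\abs{\sigma}$. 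The fluctuation is absorbed by a routine decoupling and Rademacher-concentration argument for the $(\infty\to1)$ norm, in the style of the probabilistic selection lemmas of Bourgain--Tzafriri and Rudelson; the essential feature is that the \emph{weak} $(\infty\to1)$ norm concentrates under sampling, whereas the operator norm does not.

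The central step computes the Grothendieck factorization of $\mtx{M} \defby \mtx{M}_\sigma$ by eigenvalue optimization. Put $k \defby \abs{\sigma}$ and $\mtx{N} \defby \left[\begin{smallmatrix} \zeromtx & \mtx{M} \\ \mtx{M} & \zeromtx \end{smallmatrix}\right]$, and consider the program of minimizing $\vct{1}^\adj\vct{d}$ over $\vct{d} \in \Rspace{2k}$ subject to $\diag(\vct{d}) \psdge \mtx{N}$ --- equivalently, subject to $\lambda_{\max}(\mtx{N} - \diag(\vct{d})) \leq 0$, an eigenvalue-minimization problem. It is convex, satisfies Slater's condition, and can be solved to arbitrary accuracy in polynomial time; its dual is precisely the {\sc maxcut} relaxation $\max\{ \ip{\mtx{N}}{\mtx{Z}} : \mtx{Z} \psdge \zeromtx,\ Z_{ii} = 1 \}$, whose optimal value is at most $2\cnst{K}\,\pnorm{\infty\to1}{\mtx{M}}$ by Grothendieck's inequality applied to the off-diagonal block structure of $\mtx{N}$ ($\cnst{K}$ denoting Grothendieck's constant). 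Strong duality hands us $\vct{d}$ with $\vct{1}^\adj\vct{d} \leq 2\cnst{K}\,\pnorm{\infty\to1}{\mtx{M}}$ and $\diag(\vct{d}) \psdge \mtx{N}$; averaging $\vct{d}$ with its block swap and renormalizing yields a probability vector $\vct{p}$ for which $\smnorm{}{\diag(\vct{p})^{\psinv/2}\,\mtx{M}\,\diag(\vct{p})^{\psinv/2}} \leq \cnst{K}\,\pnorm{\infty\to1}{\mtx{M}}$, the sought Grothendieck factorization --- now an algorithmic output rather than an existence statement. (An index $j$ with $p_j = 0$ forces the $j$th column of $\mtx{M}$ to vanish, hence does no harm in $\tau$.) I would then take $\tau \defby \{ j \in \sigma : p_j \leq 2/k \}$; Markov's inequality gives $\abs{\tau} \geq k/2 \asymp \strank(\mtx{A})$, and restricting the factorization to $\tau$ gives
\[
\norm{\mtx{M}_\tau} \;\leq\; \Bigl(\max_{j \in \tau} p_j\Bigr)\,\smnorm{}{\diag(\vct{p}_\tau)^{\psinv/2}\,\mtx{M}_\tau\,\diag(\vct{p}_\tau)^{\psinv/2}} \;\leq\; \frac{2}{k}\cdot\cnst{K}\,\pnorm{\infty\to1}{\mtx{M}} \;\leq\; 2\cnst{K}\eps,
\]
by the estimate above. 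Choosing $\cnst{c}$, hence $\eps$, small enough that $2\cnst{K}\eps < \half$ completes the construction.

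The hard part will be the central step. Standard proofs of Grothendieck's factorization theorem go through a Hahn--Banach or minimax argument and produce no procedure; the new ingredient is the recognition that the optimal weights solve a finite convex eigenvalue-minimization problem whose dual is the {\sc maxcut} semidefinite program, together with the use of Grothendieck's inequality to certify the factorization's quality (the same pairing underlies the announced approximation algorithm for the $(\infty\to1)$ norm). What remains is care: propagating the inexactness of a polynomial-time SDP solver --- and the rounding needed to restore exact feasibility --- through the extraction step; since Theorems~\ref{thm:kt} and~\ref{thm:bt} leave slack in their constants, this error is absorbed into $\eps$. The concentration bound of the sampling step is technical but sits comfortably within the existing machinery for restricted invertibility.
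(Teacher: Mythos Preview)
Your proposal is correct and follows the paper's overall strategy: random subsampling to control a weak norm, then a computable factorization obtained from an eigenvalue-minimization problem whose dual is the \textsc{maxcut} relaxation, followed by thresholding the factorization weights via Markov's inequality. The one substantive difference is that you treat both theorems with a single Grothendieck-factorization argument on the hollow Gram matrix, whereas the paper runs two parallel tracks---Pietsch factorization and the $(\infty,2)$ norm for Kashin--Tzafriri, Grothendieck factorization and the $(\infty,1)$ norm for Bourgain--Tzafriri. Your unification is cleaner and loses nothing for the theorem as stated; the paper's separate treatment buys an independent algorithm for Pietsch factorization (and hence the approximation scheme for the $(\infty,2)$ norm advertised in Remark~\ref{rem:inf2-norm}), and somewhat better explicit constants for Theorem~\ref{thm:kt}, since the $(\infty,2)$ sampling lemma is sharper and more elementary than the Rudelson--Vershynin $(\infty,1)$ estimate you invoke. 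Your SDP formulation (minimize $\vct{1}^\adj\vct{d}$ subject to $\diag(\vct{d}) \psdge \mtx{N}$) is the trace-form variant the paper records in Section~\ref{sec:other-forms}; the paper mostly works with the equivalent $\lambda_{\max}$ formulation with a target parameter $\alpha$, which has the minor practical advantage that a merely feasible (rather than near-optimal) point already yields a usable factorization.
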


This result is significant because no known algorithm for column subset selection is guaranteed to produce a submatrix whose condition number has constant order.  See~\cite{BDM08:Selecting-Exactly} for a recent overview of that literature.
The present work has other ramifications with independent interest.
\begin{itemize}
\item	We develop algorithms for computing the matrix factorizations of Pietsch and Grothendieck, which are regarded as basic instruments in modern functional analysis~\cite{Pis86:Factorization-Linear}.

\item	The methods for computing these factorizations lead to new approximation algorithms for two {\sf NP}-hard matrix norms.  (See Remarks~\ref{rem:inf2-norm} and~\ref{rem:inf1-norm}.)

\item 	We identify an intriguing connection between Pietsch factorization and the {\sc maxcut} semidefinite program~\cite{GW95:Improved-Approximation}.
\end{itemize}

\subsection{Overview}

We focus on the algorithmic version of the Kashin--Tzafriri theorem because it highlights all the essential concepts while minimizing irrelevant details.  Section~\ref{sec:kt-pf} outlines a proof of this result, emphasizing where new algorithmic machinery is required.  The missing link turns out to be a computational method for producing a certain matrix factorization.  Section~\ref{sec:pietsch} reformulates the factorization problem as an eigenvalue minimization, which can be completed with standard techniques.  In Section~\ref{sec:kt-alg}, we exhibit a randomized algorithm that delivers the submatrix promised by Kashin--Tzafriri.  In Section~\ref{sec:bt}, we traverse a similar route to develop an algorithmic version of Bourgain--Tzafriri.  Section~\ref{sec:discussion} provides more details about the stable rank and describes directions for future work.  Appendix~\ref{app:rdm-norm} contains some key estimates on the norms of random submatrices, and Appendix~\ref{app:emd} outlines a simple computational procedure for solving the eigenvalue optimization problems that arise in our work.

\section{The Kashin--Tzafriri Theorem}
	\label{sec:kt-pf}

The proof of the Kashin--Tzafriri theorem proceeds in two steps.  First, we select a random set of columns with appropriate cardinality.  Second, we use a matrix factorization to identify and remove redundant columns that inflate the spectral norm.  The proof gives strong hints about how a computational procedure might work, even though it is not constructive.

\subsection{Intuitions}

We would like to think that a random submatrix inherits its share of the norm of the entire matrix.  In other words, if we were to select a tenth of the columns, we might hope to reduce the norm by a factor of ten.  Unfortunately, this intuition is meretricious.

Indeed, random selection does not necessarily reduce the spectral norm at all.  The essential reason emerges when we consider the ``double identity,'' the $m \times 2m$ matrix $\mtx{A} = \begin{bmatrix} \Id \ |\ \Id \end{bmatrix}$.  Suppose we draw $s$ random columns from $\mtx{A}$ without replacement.  The probability that all $s$ columns are distinct is
$$
\frac{2m - 2}{2m - 1} \times \frac{2m - 4}{2m - 2} \times \cdots \times
	\frac{2m - 2(s-1)}{2m - (s-1)}
	\leq \prod\nolimits_{j=0}^{s-1} \left( 1 - \frac{j}{2m} \right)
	\approx \exp\left\{ - \sum\nolimits_{j=0}^{s-1} \frac{j}{2m} \right\}
	\approx \econst^{-s^2/4m}.
$$
Therefore, when $s = \Omega(\sqrt{m})$, sampling almost always produces a submatrix with at least one duplicated column.  A duplicated column means that the norm of the submatrix is $\sqrt{2}$, which equals the norm of the full matrix, so no reduction takes place.

Nevertheless, a randomly chosen set of columns from a standardized matrix typically \emph{contains} a large set of columns that has small norm.  We will see that the desired subset is exposed by factoring the random submatrix.  This factorization, which was invented by Pietsch, is regarded as a basic instrument in modern functional analysis.

\subsection{The $(\infty,2)$ operator norm}

Although sampling does not necessarily reduce the spectral norm, it often reduces other matrix norms.  Define the natural norm on linear operators from $\ell_\infty$ to $\ell_2$ via the expression
$$
\pnorm{\infty\to2}{\mtx{B}}
	= \max\{ \enorm{\mtx{B}\vct{x}} : \infnorm{\mtx{x}} = 1 \}.
$$
An immediate consequence is that
$\pnorm{\infty\to2}{\mtx{B}} \leq \sqrt{s}\norm{\mtx{B}}$
for each matrix $\mtx{B}$ with $s$ columns.  Equality can obtain in this bound.  

The exact calculation of the $(\infty, 2)$ operator norm is computationally difficult.  Results of Rohn~\cite{Roh00:Computing-Norm} imply that there is a class of positive semidefinite matrices for which it is {\sf NP}-hard to estimate $\pnorm{\infty\to2}{\cdot}$ within an absolute tolerance.  Nevertheless, we will see that the norm can be approximated in polynomial time up to a small relative error.  (See Remark~\ref{rem:inf2-norm}.)



As we have intimated, the $(\infty, 2)$ norm can often be reduced by random selection.  The following theorem requires some heavy lifting, which we delegate to Appendix~\ref{app:inf2}.

\begin{thm} \label{thm:rdm-inf2}
Suppose $\mtx{A}$ is a standardized matrix with $n$ columns.  Choose
$$
s \leq \lceil 2 \strank(\mtx{A}) \rceil,
$$
and draw a uniformly random subset $\sigma$ with cardinality $s$ from $\{1, 2, \dots, n\}$.  Then
$$
\Expect \pnorm{\infty\to 2}{\mtx{A}_{\sigma}} \leq
	7 \sqrt{s}.
$$
In particular, $\pnorm{\infty\to 2}{\mtx{A}_{\sigma}} \leq 8 \sqrt{s}$ with probability at least $1/8$.
\end{thm}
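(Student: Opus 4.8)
The plan is to bound the expectation $\Expect \pnorm{\infty\to2}{\mtx{A}_{\sigma}}$ and then deduce the high-probability tail bound via a reverse Markov-type argument, since $\pnorm{\infty\to2}{\mtx{A}_{\sigma}}$ is bounded above deterministically by $\sqrt{s}\,\norm{\mtx{A}}$, and more crudely by $\sqrt{s \cdot \maxcol}$ where $\maxcol \le$ something controlled; in fact one always has $\pnorm{\infty\to2}{\mtx{A}_{\sigma}} \le \sqrt{s}$ when... no, wait --- that is false in general, so I will instead use the cruder deterministic bound $\pnorm{\infty\to2}{\mtx{A}_{\sigma}}^2 \le s$ times the maximal $\ell_2^2$ mass, which for a standardized matrix gives $\pnorm{\infty\to2}{\mtx{A}_{\sigma}} \le s$ (each column has unit norm, so $\enorm{\mtx{A}_\sigma \vct{x}} \le \sum_j \abs{x_j} \le s$ for $\infnorm{\vct x} \le 1$). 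With the almost-sure bound $\pnorm{\infty\to2}{\mtx{A}_{\sigma}} \le s$ and the mean bound $\Expect \pnorm{\infty\to2}{\mtx{A}_{\sigma}} \le 7\sqrt s$ in hand, Markov applied to the nonnegative random variable $s - \pnorm{\infty\to2}{\mtx{A}_{\sigma}}$... this is getting delicate because $7\sqrt s$ versus $s$; instead I will simply write $X = \pnorm{\infty\to2}{\mtx{A}_{\sigma}}$, note $X \ge 0$, and apply Markov's inequality directly: $\Prob{X > 8\sqrt s} \le \Expect X / (8\sqrt s) \le 7\sqrt s / (8 \sqrt s) = 7/8$, so $X \le 8\sqrt s$ with probability at least $1/8$. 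That is the entire second sentence, and it needs nothing beyond the expectation bound.

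So the real content is the expectation estimate $\Expect \pnorm{\infty\to 2}{\mtx{A}_{\sigma}} \le 7\sqrt{s}$, which the excerpt explicitly defers to Appendix~\ref{app:inf2}, so for the purposes of this statement I may invoke it as the substantive input. The shape of that appendix argument (which I would reconstruct as follows) is: pass from sampling without replacement to sampling with replacement, or equivalently compare to i.i.d.\ selection, using a standard contraction/decoupling reduction so that $\sigma$ is modeled by independent selectors; then symmetrize, introducing Rademacher signs, and bound $\Expect \pnorm{\infty\to2}{\mtx{A}_\sigma}$ by controlling $\Expect \max_{\infnorm{\vct x}=1} \enorm{\sum_j \eps_j \delta_j x_j \atom_j}$; finally invoke a noncommutative Khintchine / Rudelson-type inequality to replace the Rademacher average by an expression in $\norm{\sum_j \delta_j \atom_j \atom_j^{\transp}}$ and $\max_j \enorm{\atom_j}$, and use standardization ($\enorm{\atom_j}=1$) together with the choice $s \le \lceil 2\strank(\mtx A)\rceil$ to see that the relevant operator norm is $\bigO(1)\cdot\smax(\mtx A)^2$ and the Frobenius-type term contributes the $\sqrt s$. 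Tracking the absolute constants through these three reductions is what produces the explicit $7$.

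The main obstacle, then, is entirely in that deferred appendix computation --- specifically, getting a clean constant out of the symmetrization-plus-Rudelson chain while handling sampling \emph{without} replacement (the ``double identity'' example in the text shows one cannot be cavalier about duplicates). For the theorem as stated here, however, once Appendix~\ref{app:inf2} supplies $\Expect X \le 7\sqrt s$, the in-probability claim is an immediate one-line application of Markov's inequality to the nonnegative variable $X = \pnorm{\infty\to2}{\mtx{A}_\sigma}$, as above; there is no further difficulty.
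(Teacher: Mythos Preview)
Your derivation of the ``in particular'' clause via Markov's inequality is correct and matches the paper's implicit reasoning; the initial detour through deterministic upper bounds is unnecessary, as you eventually recognize. Since the statement itself defers the expectation bound to the appendix, your treatment of the theorem proper is complete.

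Your parenthetical reconstruction of the appendix argument, however, is not what the paper does and would not go through as written. You propose to symmetrize to $\Expect \max_{\infnorm{\vct{x}}=1}\enorm{\sum_j \eps_j \delta_j x_j \vct{a}_j}$ and then apply a noncommutative Khintchine or Rudelson-type bound. Two issues. First, Rademacher signs on the $\ell_\infty$ side are absorbed by the constraint (replace $x_j$ by $\eps_j x_j$), so this symmetrization is vacuous and leaves you exactly where you started. Second, Rudelson's lemma controls the \emph{spectral} norm of $\sum_j \delta_j \vct{a}_j\vct{a}_j^{\adj}$; routing through $\pnorm{\infty\to2}{\mtx{A}_\sigma} \le \sqrt{s}\,\norm{\mtx{A}_\sigma}$ would at best yield $\bigO(\sqrt{s\log m})$, not the log-free $7\sqrt{s}$. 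The paper instead dualizes to the $(2,1)$ norm, writing $\pnorm{\infty\to2}{\mtx{A}\mtx{R}_\delta} = \max_{\enorm{\vct{x}}=1}\sum_j \delta_j \absip{\vct{a}_j}{\vct{x}}$, centers and symmetrizes the \emph{selectors} $\delta_j$, applies a Rademacher comparison theorem to strip the absolute values, and is left with $\Expect\enorm{\sum_j \eps_j(\delta_j-\delta_j')\vct{a}_j}$, which an elementary second-moment computation bounds by $\sqrt{2\delta(1-\delta)}\,\fnorm{\mtx{A}}$. A Poissonization lemma converts the uniform model to i.i.d.\ Bernoulli selectors at the cost of a factor~$2$. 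No matrix-concentration machinery enters, and the argument is both sharper and more elementary than the route you sketch.
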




\subsection{Pietsch factorization}

We cannot exploit the bound in Theorem~\ref{thm:rdm-inf2} unless we have a way to connect the $(\infty, 2)$ norm with the spectral norm.  To that end, let us recall one of the landmark theorems of functional analysis.

\begin{thm}[Pietsch Factorization] \label{thm:pietsch}
Each matrix $\mtx{B}$ can be factored as $\mtx{B} = \mtx{TD}$ where
\begin{itemize}
\item	$\mtx{D}$ is a nonnegative, diagonal matrix with $\trace(\mtx{D}^2) = 1$, and
\item	$\pnorm{\infty\to2}{\mtx{B}} \leq \norm{\mtx{T}} \leq \cnst{K_P} \pnorm{\infty\to 2}{\mtx{B}}$.
\end{itemize}
\end{thm}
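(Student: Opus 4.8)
The plan is to realize $\mtx{D}^2$ as the solution of a convex optimization problem whose value encodes the $(\infty,2)$ norm, and then read off $\mtx{T}$ from the factorization $\mtx{B} = \mtx{T}\mtx{D}$. Write $\mtx{B}$ with columns $\vct{b}_1, \dots, \vct{b}_s$. For a probability vector $\vct{p} = (p_1, \dots, p_s)$ on the simplex, set $\mtx{D}_{\vct{p}} = \diag(\sqrt{p_1}, \dots, \sqrt{p_s})$, so $\trace(\mtx{D}_{\vct{p}}^2) = 1$ automatically. On the support of $\vct{p}$ we may define $\mtx{T} = \mtx{B}\mtx{D}_{\vct{p}}^{\psinv}$, which has columns $\vct{b}_j / \sqrt{p_j}$ and satisfies $\mtx{B} = \mtx{T}\mtx{D}_{\vct{p}}$. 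The lower bound $\pnorm{\infty\to2}{\mtx{B}} \leq \norm{\mtx{T}}$ is immediate and holds for \emph{every} such factorization: since $\mtx{D}_{\vct{p}}$ maps the $\ell_\infty$ unit ball into the $\ell_2$ unit ball (its $j$th diagonal entry is at most $1$ only after we observe $\sum p_j = 1$ forces each $p_j \le 1$), we get $\pnorm{\infty\to2}{\mtx{B}} = \pnorm{\infty\to2}{\mtx{T}\mtx{D}_{\vct{p}}} \le \norm{\mtx{T}} \cdot \pnorm{\infty\to2}{\mtx{D}_{\vct{p}}} \le \norm{\mtx{T}}$. The entire content of the theorem is therefore the existence of a choice of $\vct{p}$ making the \emph{upper} bound $\norm{\mtx{T}} \le \cnst{K_P}\pnorm{\infty\to2}{\mtx{B}}$ hold.

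For the upper bound I would invoke (the matrix case of) Pietsch's domination theorem, which is the standard route: a bound on the $(\infty,2)$ norm, $\pnorm{\infty\to2}{\mtx{B}} = \max_{\infnorm{\vct{x}}=1} \enorm{\mtx{B}\vct{x}}$, says that the quadratic form $\vct{x} \mapsto \enormsq{\mtx{B}\vct{x}} = \sum_j \abssq{\ip{\vct{x}}{\overline{\vct{b}_j}}}$ is dominated on the cube by a constant. Pietsch's factorization/domination argument — a Hahn--Banach separation between a convex set of ``bad'' functions and the cone of functions dominated by a single coordinate evaluation, exactly as in \cite{Pis86:Factorization-Linear} — produces a probability vector $\vct{p}$ with
$$
\enormsq{\mtx{B}\vct{x}} \leq \cnst{K_P}^2 \, \pnorm{\infty\to2}{\mtx{B}}^2 \sum\nolimits_{j} p_j \abssq{x_j}
	\quad\text{for all } \vct{x}.
$$
Rewriting the right side as $\cnst{K_P}^2 \pnorm{\infty\to2}{\mtx{B}}^2 \enormsq{\mtx{D}_{\vct{p}}\vct{x}}$ and substituting $\vct{x} = \mtx{D}_{\vct{p}}^{\psinv}\vct{y}$ (for $\vct{y}$ supported on $\supp{\vct{p}}$) gives $\enorm{\mtx{T}\vct{y}} \le \cnst{K_P}\pnorm{\infty\to2}{\mtx{B}}\enorm{\vct{y}}$, i.e. $\norm{\mtx{T}} \le \cnst{K_P}\pnorm{\infty\to2}{\mtx{B}}$. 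To handle coordinates outside $\supp{\vct{p}}$ one checks that the domination inequality forces $\vct{b}_j = \vct{0}$ whenever $p_j = 0$ (take $\vct{x} = \onevct_j$), so those columns contribute nothing and the diagonal entry there can be left at zero. The constant $\cnst{K_P}$ comes from the Grothendieck-type constant in the domination theorem; in the real case one can take $\cnst{K_P} = \sqrt{\pi/2}$, but for our purposes any universal constant suffices.

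The main obstacle is making the Hahn--Banach step airtight: one must exhibit the right convex set in the space of functions on the cube $\{-1,+1\}^s$ (or on the Euclidean ball of $\ell_\infty^s$), verify it is disjoint from the relevant negative cone, and extract the separating functional as a probability measure. An alternative, and perhaps cleaner for a self-contained treatment, is a minimax argument: consider $\min_{\vct{p}} \norm{\mtx{B}\mtx{D}_{\vct{p}}^{\psinv}}^2 = \min_{\vct{p}} \max_{\enorm{\vct{y}}=1} \sum_j \abssq{y_j}/p_j \cdot (\text{appropriately coupled})$ — more precisely, apply Sion's minimax theorem to the bilinear-in-$(\vct{p}, \text{measure on the cube})$ expression, swap the order, and recognize the inner problem as the $(\infty,2)$ norm. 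That reduces the whole theorem to checking concavity/convexity and compactness hypotheses, which are routine. I would present the minimax version as the proof and remark that it is the constructive shadow of the classical argument — indeed this is precisely the reformulation exploited in Section~\ref{sec:pietsch} to turn the factorization into an eigenvalue minimization.
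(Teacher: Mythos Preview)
The paper does not prove this theorem itself; it cites the result as a consequence of the little Grothendieck theorem together with the Pietsch factorization theorem (both from Pisier's monograph), and remarks only that the standard proof is an abstract separation argument offering no algorithmic insight. Your proposal is exactly that standard argument---the Hahn--Banach (or minimax) production of a dominating probability $\vct{p}$, with the constant $\cnst{K_P}$ supplied by little Grothendieck (your single ``domination'' step is really bundling these two ingredients)---so it aligns with what the paper merely invokes.
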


This result follows from the little Grothendieck theorem~\cite[Sec.~5b]{Pis86:Factorization-Linear} and the Pietsch factorization theorem~\cite[Cor.~1.8]{Pis86:Factorization-Linear}.  The standard proof produces the factorization using an abstract separation argument that offers no algorithmic insight.  The value of the constant is available.
\begin{itemize}
\item	When the scalar field is real, we have $\cnst{K_P}(\Rspace{}) = \sqrt{\pi/2} \approx 1.25$.
\item	When the scalar field is complex, we have $\cnst{K_P}(\Cspace{}) = \sqrt{4/\pi} \approx 1.13$.
\end{itemize}

A major application of Pietsch factorization is to identify a submatrix with controlled spectral norm.  The following proposition describes the procedure.

\begin{prop} \label{prop:pietsch-cols}
Suppose $\mtx{B}$ is a matrix with $s$ columns.  Then there is a set $\tau$ of column indices for which
$$
\abs{\tau} \geq \frac{s}{2}
\quad\text{and}\quad
\norm{ \mtx{B}_{\tau} }
	\leq \cnst{K_P} \sqrt{\frac{2}{s}} \pnorm{\infty\to2}{\mtx{B}}.
$$
\end{prop}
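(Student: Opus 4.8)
The plan is to use the Pietsch factorization $\mtx{B} = \mtx{TD}$ from Theorem~\ref{thm:pietsch} and then extract from the diagonal matrix $\mtx{D}$ the columns where the diagonal entries are not too large. Since $\trace(\mtx{D}^2) = 1$ and $\mtx{D}$ has $s$ diagonal entries $d_1, \dots, d_s \geq 0$, we have $\sum_j d_j^2 = 1$, so the average value of $d_j^2$ is $1/s$. A simple counting argument (Markov's inequality applied to the uniform distribution on the index set) shows that at least half the indices $j$ satisfy $d_j^2 \leq 2/s$, i.e., $d_j \leq \sqrt{2/s}$. Let $\tau$ be this set of indices, so $\abs{\tau} \geq s/2$.

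Next I would relate $\norm{\mtx{B}_\tau}$ to $\norm{\mtx{T}}$ and the bound on the diagonal. Restricting the factorization to the columns in $\tau$ gives $\mtx{B}_\tau = \mtx{T}\mtx{D}_\tau$, where $\mtx{D}_\tau$ is the submatrix of $\mtx{D}$ with columns indexed by $\tau$ (equivalently, $\mtx{B}_\tau = \mtx{T} (\mtx{D}_{\tau \times \tau})$ after deleting the zero columns). Then by submultiplicativity of the spectral norm,
$$
\norm{\mtx{B}_\tau} = \norm{\mtx{T} \mtx{D}_\tau} \leq \norm{\mtx{T}} \cdot \norm{\mtx{D}_\tau}.
$$
Since $\mtx{D}_\tau$ is (after removing zero columns) a nonnegative diagonal matrix whose entries are all bounded by $\sqrt{2/s}$, its spectral norm is exactly $\max_{j \in \tau} d_j \leq \sqrt{2/s}$.

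Finally I would invoke the upper bound $\norm{\mtx{T}} \leq \cnst{K_P} \pnorm{\infty\to2}{\mtx{B}}$ from the second bullet of Theorem~\ref{thm:pietsch}, yielding
$$
\norm{\mtx{B}_\tau} \leq \cnst{K_P} \pnorm{\infty\to2}{\mtx{B}} \cdot \sqrt{\frac{2}{s}},
$$
which is exactly the claimed bound. There is no serious obstacle here — the argument is a short chain of elementary inequalities once Pietsch factorization is in hand; the only point requiring a moment's care is the counting step that produces a set of size at least $s/2$, and the observation that passing to a column submatrix of $\mtx{D}$ cannot increase the spectral norm (since one is merely discarding columns, hence discarding rows/columns of a diagonal matrix). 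The real content — and the part that is not constructive in the classical treatment — is entirely hidden inside Theorem~\ref{thm:pietsch}, which this proposition takes as a black box.
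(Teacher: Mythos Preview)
Your proof is correct and essentially identical to the paper's own argument: both take the Pietsch factorization $\mtx{B}=\mtx{TD}$, define $\tau=\{j:d_{jj}^2\leq 2/s\}$, use Markov's inequality to get $\abs{\tau}\geq s/2$, and then bound $\norm{\mtx{B}_\tau}\leq\norm{\mtx{T}}\cdot\norm{\mtx{D}_\tau}\leq \cnst{K_P}\pnorm{\infty\to2}{\mtx{B}}\cdot\sqrt{2/s}$.
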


\begin{proof}
Consider a Pietsch factorization $\mtx{B} = \mtx{TD}$, and define
$$
\tau = \{ j : d_{jj}^2 \leq 2/s \}.
$$
Since $\sum d_{jj}^2 = 1$, Markov's inequality implies that $\abs{\tau} \geq s/2$.  We may calculate that
$$
\norm{ \mtx{B}_{\tau} }
	= \norm{ \mtx{TD}_{\tau} }
	\leq \norm{ \mtx{T} } \cdot \norm{ \mtx{D}_{\tau} }
	\leq \cnst{K_P} \pnorm{\infty\to2}{\mtx{B}} \cdot \sqrt{2/s}.
$$
This completes the proof.
\end{proof}

\subsection{Proof of Kashin--Tzafriri}
	\label{sec:kt-pf-digest}

With these results at hand, we easily complete the proof of the Kashin--Tzafriri theorem.  Suppose $\mtx{A}$ is a standardized matrix with $n$ columns.  Assume that $\strank(\mtx{A}) \leq n/2$.  Otherwise, the spectral norm $\norm{\mtx{A}} \leq \sqrt{2}$, so we may select $\tau = \{1, 2, \dots, n\}$.

According to Theorem~\ref{thm:rdm-inf2}, there is a subset $\sigma$ of column indices for which
$$
\abs{\sigma} \geq 2 \strank(\mtx{A})
\quad\text{and}\quad
\pnorm{\infty\to2}{ \mtx{A}_{\sigma} } \leq 8 \sqrt{\abs{\sigma}}.
$$
Apply Proposition~\ref{prop:pietsch-cols} to the matrix $\mtx{B} = \mtx{A}_{\sigma}$ to obtain a subset $\tau$ inside $\sigma$ for which
$$
\abs{\tau} \geq \frac{\abs{\sigma}}{2}
\quad\text{and}\quad
\norm{ \mtx{B}_{\tau} }
	\leq \cnst{K_P} \sqrt{\frac{2}{\abs{\sigma}}}\pnorm{\infty\to2}{\mtx{B}}.
$$
Since $\mtx{B}_{\tau} = \mtx{A}_{\tau}$ and $\cnst{K_P} \leq \sqrt{\pi/2}$, these bounds reveal the advertised conclusion:
$$
\abs{\tau} \geq \strank(\mtx{A})
\quad\text{and}\quad
\norm{ \mtx{A}_{\tau} }
	< 15.
$$


At this point, we take a step back and notice that this proof is nearly algorithmic.  It is straightforward to perform the random selection described in Theorem~\ref{thm:rdm-inf2}.  Provided that we know a Pietsch factorization of the matrix $\mtx{B}$, we can easily carry out the column selection  of Proposition~\ref{prop:pietsch-cols}.  Therefore, we need only develop an algorithm for computing the Pietsch factorization to reach an effective version of the Kashin--Tzafriri theorem.

\section{Pietsch Factorization via Convex Optimization}
	\label{sec:pietsch}

The main novelty is to demonstrate that we can produce a Pietsch factorization by solving a convex programming problem.  Remarkably, the resulting optimization is the dual of the famous \textsc{maxcut} semidefinite program~\cite{GW95:Improved-Approximation}, for which many polynomial-time algorithms are available.

\subsection{Pietsch and eigenvalues}
	\label{sec:pietsch-eigs}

The next theorem, which serves as the basis for our computational method, demonstrates that Pietsch factorizations have an intimate relationship with the eigenvalues of a related matrix.  In the sequel, we reserve the letter $\mtx{D}$ for a nonnegative, diagonal matrix with $\trace(\mtx{D}^2) = 1$, and we write $\lambda_{\max}$ for the algebraically maximal eigenvalue of a Hermitian matrix.

\begin{thm} \label{thm:pietsch-eig}
The factorization $\mtx{B} = \mtx{TD}$ satisfies $\norm{\mtx{T}} \leq \alpha$ if and only if $\mtx{D}$ satisfies
$$
\lambda_{\max}(\mtx{B}^\adj \mtx{B} - \alpha^2 \mtx{D}^2) \leq 0.
$$
In particular, if no $\mtx{D}$ verifies this bound, then no factorization $\mtx{B} = \mtx{TD}$ admits $\norm{\mtx{T}} \leq \alpha$.
\end{thm}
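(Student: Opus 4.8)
The plan is to recast both constraints --- the spectral-norm bound on $\mtx{T}$ and the eigenvalue bound on $\mtx{D}$ --- as statements in the positive semidefinite (Loewner) order, and then reconcile them. The only genuinely delicate point is that $\mtx{D}$ may be singular, so that $\mtx{T}$ is not determined by $\mtx{D}$ alone and must be produced by hand.

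\textbf{The forward implication.} Suppose $\mtx{B} = \mtx{TD}$ with $\norm{\mtx{T}} \le \alpha$. I would use that the norm bound is equivalent to $\mtx{T}^\adj\mtx{T} \psdle \alpha^2\Id$ and that conjugation by the (real, diagonal) matrix $\mtx{D}$ preserves the order; this gives $\mtx{B}^\adj\mtx{B} = \mtx{D}\mtx{T}^\adj\mtx{T}\mtx{D} \psdle \alpha^2\mtx{D}^2$, hence $\mtx{B}^\adj\mtx{B} - \alpha^2\mtx{D}^2 \psdle \zeromtx$, i.e.\ $\lambda_{\max}(\mtx{B}^\adj\mtx{B} - \alpha^2\mtx{D}^2) \le 0$. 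The ``in particular'' sentence is then automatic: the existence of a good factorization forces the eigenvalue bound, so if no $\mtx{D}$ satisfies the bound, no such factorization can exist.

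\textbf{The converse.} Assuming $\mtx{B}^\adj\mtx{B} \psdle \alpha^2\mtx{D}^2$, I would exhibit the factor explicitly as $\mtx{T} = \mtx{B}\mtx{D}^\psinv$ and verify two things. First, that it actually reproduces $\mtx{B}$: with $\mtx{P} = \mtx{D}^\psinv\mtx{D}$ the diagonal projector onto the indices $j$ with $d_{jj} \neq 0$, one has $\mtx{T}\mtx{D} = \mtx{B}\mtx{P}$, so the claim $\mtx{T}\mtx{D} = \mtx{B}$ reduces to showing that the columns $\vct{b}_j$ with $d_{jj} = 0$ vanish; this comes from comparing $(j,j)$ entries in the hypothesis (testing against $\onevct_j$) to get $\enormsq{\vct{b}_j} \le \alpha^2 d_{jj}^2$. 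Second, that $\norm{\mtx{T}} \le \alpha$: conjugating the hypothesis by the Hermitian diagonal matrix $\mtx{D}^\psinv$ yields $\mtx{T}^\adj\mtx{T} = \mtx{D}^\psinv(\mtx{B}^\adj\mtx{B})\mtx{D}^\psinv \psdle \alpha^2\mtx{D}^\psinv\mtx{D}^2\mtx{D}^\psinv = \alpha^2\mtx{P} \psdle \alpha^2\Id$, using $\mtx{D}^\psinv\mtx{D}^2\mtx{D}^\psinv = \mtx{P}$ for a nonnegative diagonal $\mtx{D}$.

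The main (and really only) obstacle is the bookkeeping around the degenerate case $d_{jj} = 0$; once that is handled via the diagonal-entry comparison above, the equivalence is immediate from the order-preservation of conjugation. The normalization $\trace(\mtx{D}^2) = 1$ is never used in this argument --- it is carried along solely to match the normalization in the Pietsch factorization.
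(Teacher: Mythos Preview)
Your proof is correct and follows essentially the same route as the paper: both directions are handled via the Loewner order, with the degenerate case $d_{jj}=0$ dispatched by reading off the $(j,j)$ diagonal entry, and the factor is built as $\mtx{T}=\mtx{B}\mtx{D}^\psinv$. The only cosmetic difference is that, for the converse norm bound, the paper factors out $\mtx{D}$ and invokes Sylvester's inertia theorem, whereas you conjugate the semidefinite inequality by $\mtx{D}^\psinv$ directly---your maneuver is arguably cleaner since conjugation preserves the order regardless of whether $\mtx{D}$ is singular.
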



\begin{proof}
Assume $\mtx{B}$ has a factorization $\mtx{B} = \mtx{TD}$ with $\norm{\mtx{T}} \leq \alpha$.  We have the chain of implications
\begin{align*}
\mtx{B} = \mtx{TD}
&\quad\Longrightarrow\quad
\enormsq{\mtx{B}\vct{x}} = \enormsq{\mtx{TD}\vct{x}} & \forall \vct{x} \\
&\quad\Longrightarrow\quad
\enormsq{\mtx{B}\vct{x}} \leq \alpha^2 \enormsq{\mtx{D}\vct{x}} & \forall \vct{x} \\
&\quad\Longrightarrow\quad
\vct{x}^\adj \mtx{B}^\adj \mtx{B} \vct{x}
	\leq \alpha^2 \vct{x}^\adj \mtx{D}^2 \vct{x} & \forall \vct{x} \\
&\quad\Longrightarrow\quad
\vct{x}^\adj ( \mtx{B}^\adj \mtx{B} - \alpha^2 \mtx{D}^2 ) \vct{x} \leq 0 & \forall \vct{x} \\
&\quad\Longrightarrow\quad
\mtx{B}^\adj \mtx{B} - \alpha^2 \mtx{D}^2 \psdle \mtx{0},
\end{align*}
where $\psdle$ denotes the semidefinite, or L{\"o}wner, ordering on Hermitian matrices.

Conversely, assume we are provided the inequality
\begin{equation} \label{eqn:nsd-bound}
\mtx{B}^\adj \mtx{B} - \alpha^2 \mtx{D}^2 \psdle \mtx{0}.
\end{equation}
First, we claim that any zero entry in $\mtx{D}$ corresponds with a zero column of $\mtx{B}$.  To check this point, suppose that $d_{jj} = 0$ for an index $j$.  The relation \eqref{eqn:nsd-bound} requires that
$$
0
\geq (\mtx{B}^\adj \mtx{B} - \alpha^2 \mtx{D}^2)_{jj}
= \vct{b}_j^\adj \vct{b}_j.
$$
This inequality is impossible unless $\vct{b}_{j} = \vct{0}$.
To continue, set $\mtx{T} = \mtx{BD}^\psinv$, and observe that $\mtx{B} = \mtx{TD}$ because the zero entries of $\mtx{D}$ correspond with zero columns of $\mtx{B}$.  Therefore, we may factor the diagonal matrix out from \eqref{eqn:nsd-bound} to reach
$$
\mtx{D}( \mtx{T}^\adj \mtx{T} - \alpha^2 \mtx{P} ) \mtx{D}
	\psdle \mtx{0}.
$$
where the matrix $\mtx{P} = \mtx{D}\mtx{D}^\psinv$ is an orthogonal projector.  
Sylvester's theorem on inertia~\cite[Thm.~4.5.8]{HJ85:Matrix-Analysis} ensures that
$
\mtx{T}^\adj \mtx{T} - \alpha^2 \mtx{P} \psdle \mtx{0}.
$
Since $\mtx{P}$ is a projector, this relation implies that
$$
\mtx{T}^\adj \mtx{T} \psdle \alpha^2 \mtx{P} \psdle \alpha^2 \Id.
$$
We conclude that $\norm{\mtx{T}} \leq \alpha$.
\end{proof}

\subsection{Factorization via optimization}
	\label{sec:pietsch-opt}

Recall that the maximum eigenvalue is a convex function on the space of Hermitian matrices, so it can be minimized in polynomial time~\cite{LO96:Eigenvalue-Optimization}.  We are led to consider the convex program
\begin{equation} \label{eqn:pietsch-opt}
\min \ \lambda_{\max}(\mtx{B}^\adj \mtx{B} - \alpha^2 \mtx{F})
\quad\subjto\quad
\text{$\trace(\mtx{F}) = 1$, $\mtx{F}$ diagonal, and $\mtx{F} \geq \mtx{0}$.}
\end{equation}
Owing to Theorem~\ref{thm:pietsch-eig}, there exists a factorization $\mtx{B}=\mtx{TD}$ with $\norm{\mtx{T}} \leq \alpha$ if and only if the value of \eqref{eqn:pietsch-opt} is nonpositive.





Now, if $\mtx{F}$ is a feasible point of \eqref{eqn:pietsch-opt} with a nonpositive objective value, we can factorize
$$
\mtx{B} = \mtx{TD}
\quad\text{with}\quad
\mtx{D} = \mtx{F}^{1/2}, \quad
\mtx{T} = \mtx{BD}^{\psinv}, \quad \text{and}\quad
\norm{\mtx{T}} \leq \alpha.
$$
In fact, it is not necessary to solve \eqref{eqn:pietsch-opt} to optimality.  Suppose $\mtx{B}$ has $s$ columns, and assume we have identified a feasible point $\mtx{F}$ with a (positive) objective value $\eta$.  That is,
$$
\lambda_{\max}(\mtx{B}^\adj\mtx{B} - \alpha^2 \mtx{F}) \leq \eta.
$$
Rearranging this relation, we reach
$$
\lambda_{\max}\left[ \mtx{B}^\adj \mtx{B} - (\alpha^2 + \eta s)
	\widetilde{\mtx{F}} \right] \leq 0
\quad\text{where}\quad
\widetilde{\mtx{F}} = \frac{1}{\alpha^2 + \eta s}(\alpha^2 \mtx{F} + \eta \Id).
$$
Since $\widetilde{\mtx{F}}$ is positive and diagonal with $\trace(\widetilde{\mtx{F}}) = 1$, we obtain the factorization
$$
\mtx{B} = \mtx{TD} \quad\text{with}\quad
\mtx{D} = \widetilde{\mtx{F}}^{1/2}, \quad
\mtx{T} = \mtx{BD}^{-1}, \quad \text{and}\quad
\norm{\mtx{T}} \leq \sqrt{\alpha^2 + \eta s}.
$$

To select a target value for the parameter $\alpha$, we look to the proof of the Kashin--Tzafriri theorem.  If $\mtx{B}$ has $s$ columns, then $\alpha = 8\cnst{K_P}\sqrt{s}$ is an appropriate choice.  Furthermore, since the argument only uses the bound $\norm{\mtx{T}} = \bigO(\sqrt{s})$, it suffices to solve \eqref{eqn:pietsch-opt} with precision $\eta = \bigO(1)$.

\subsection{Other formulations}
	\label{sec:other-forms}
	
In a general setting, a target value for $\alpha$ is not likely to be available.  Let us exhibit an alternative formulation of \eqref{eqn:pietsch-opt} that avoids this inconvenience.
\begin{equation} \label{eqn:pietsch-primal}
\min \ \lambda_{\max}(\mtx{B}^\adj \mtx{B} - \mtx{E}) + \trace(\mtx{E})
\quad\subjto\quad
\text{$\mtx{E}$ diagonal, $\mtx{E} \geq \mtx{0}$.}
\end{equation}
Suppose $\alpha_{\star}$ is the minimal value of $\norm{\mtx{T}}$ achievable in any Pietsch factorization $\mtx{B} = \mtx{TD}$.  It can be shown that $\alpha_{\star}^2$ is the value of \eqref{eqn:pietsch-primal} and that each optimizer $\mtx{E}_{\star}$ satisfies $\trace(\mtx{E}_{\star}) = \alpha_{\star}^2$.
As such, we can construct an optimal Pietsch factorization from a minimizer:
$$
\mtx{B} = \mtx{TD}
\quad\text{with}\quad
\mtx{D} =  (\mtx{E}_{\star}/ \trace(\mtx{E}_{\star}))^{1/2}, \quad
\mtx{T} = \mtx{BD}^\psinv, \quad \text{and}\quad
\norm{\mtx{T}} = \alpha_{\star}.
$$

The dual of \eqref{eqn:pietsch-primal} is the semidefinite program
\begin{equation} \label{eqn:pietsch-dual}
\max \ \ip{ \mtx{B}^\adj \mtx{B} }{ \mtx{Z} }
\quad\subjto\quad
\text{$\diag(\mtx{Z}) = \Id$ and $\mtx{Z} \psdge \mtx{0}$.}
\end{equation}
This is the famous \textsc{maxcut} semidefinite program~\cite{GW95:Improved-Approximation}.  We find an unexpected connection between Pietsch factorization and the problem of partitioning nodes of a graph.

Given a dual optimum, we can easily construct a primal optimum by means of the complementary slackness condition~\cite[Thm.~2.10]{Ali95:Interior-Point-Methods}.  Indeed, each feasible optimal pair $(\mtx{E}_{\star}, \mtx{Z}_{\star})$ satisfies $\mtx{Z}_{\star}(\mtx{B}^\adj \mtx{B} - \mtx{E}_{\star}) = \mtx{0}$.  Examining the diagonal elements of this matrix equation, we find that
$$
\mtx{E}_{\star}
= \diag(\mtx{E}_{\star})
= \diag(\mtx{ZE}_{\star})
= \diag(\mtx{Z}_{\star} \mtx{B}^\adj \mtx{B})
$$
owing to the constraint $\diag(\mtx{Z}_{\star}) = \Id$.  Obtaining a dual optimum from a primal optimum, however, requires more ingenuity.

\begin{rem} \label{rem:inf2-norm}
According to Theorem~\ref{thm:pietsch} and the discussion here, the optimal value of \eqref{eqn:pietsch-primal} overestimates $\pnorm{\infty\to 2}{\mtx{B}}^2$ by a multiplicative factor no greater than $\cnst{K_P}^2$.  As a result, the optimization problem~\eqref{eqn:pietsch-primal} can be used to design an approximation algorithm for $(\infty, 2)$ norms.
\end{rem}

\subsection{Algorithmic aspects}

The purpose of this paper is not to rehash methods for solving a standard optimization problem, so we keep this discussion brief.  It is easy to see that \eqref{eqn:pietsch-opt} can be framed as a (nonsmooth) convex optimization over the probability simplex.  Appendix~\ref{app:emd} outlines an elegant technique, called Entropic Mirror Descent \cite{BT03:Mirror-Descent}, designed specifically for this class of problems.
Although the EMD algorithm is (theoretically) not the most efficient approach to \eqref{eqn:pietsch-opt}, preliminary experiments suggest that its empirical performance rivals more sophisticated techniques.

For a concrete time bound, we refer to Alizadeh's work on primal--dual potential reduction methods for semidefinite programming \cite{Ali95:Interior-Point-Methods}.  When $\mtx{B}$ has dimension $m \times s$, the cost of forming $\mtx{B}^\adj\mtx{B}$ is at most $\bigO(s^2 m)$.  Then the cost of solving \eqref{eqn:pietsch-dual} is no more than $\widetilde{\bigO}( s^{3.5} )$, where the tilde indicates that log-like factors are suppressed.

%


\section{An Algorithm for Kashin--Tzafriri}
	\label{sec:kt-alg}

At this point, we have amassed the mat{\'e}riel necessary to deploy an algorithm that constructs the set $\tau$ promised by the Kashin--Tzafriri theorem.  The procedure appears on page~\pageref{alg:kt} as Algorithm~\ref{alg:kt}.  The following result describes its performance.

\begin{thm} \label{thm:kt-alg}
Suppose $\mtx{A}$ is an $m \times n$ standardized matrix.  With probability at least $4/5$, Algorithm~\ref{alg:kt} produces a set $\tau = \tau_{\star}$ of column indices for which
$$
\abs{\tau} \geq \frac{1}{2} \strank(\mtx{A})
\quad\text{and}\quad
\norm{\mtx{A}_{\tau}} \leq 15.
$$
The computational cost is bounded by $\widetilde{\bigO}( \abs{\tau}^2 m + \abs{\tau}^{3.5})$.
\end{thm}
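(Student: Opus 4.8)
The plan is to recognize Algorithm~\ref{alg:kt} as a direct mechanization of the proof of Kashin--Tzafriri from Section~\ref{sec:kt-pf-digest}: one round draws the random column set of Theorem~\ref{thm:rdm-inf2}, replaces the non-constructive Pietsch factorization by an approximate minimizer of the eigenvalue program~\eqref{eqn:pietsch-opt}, and thresholds columns as in Proposition~\ref{prop:pietsch-cols}, repeating the round until it can certify a good output. So I would argue correctness of one round, amplify the success probability, and then account for the running time.

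\emph{Correctness of one round.} If $\strank(\mtx{A}) > n/2$ then $\norm{\mtx{A}} < \sqrt{2}$ and $\tau = \{1,\dots,n\}$ already meets both conclusions, so suppose $\strank(\mtx{A}) \le n/2$. The algorithm fixes an integer sample size $s$ in the range allowed by Theorem~\ref{thm:rdm-inf2}, i.e.\ $s \le \lceil 2\,\strank(\mtx{A})\rceil$, and with $s \ge \strank(\mtx{A})$ (for instance $s = \lceil\strank(\mtx{A})\rceil$), draws a uniformly random $\sigma$ of cardinality $s$, and sets $\mtx{B} = \mtx{A}_\sigma$. Let $\mathcal{E}$ be the event $\pnorm{\infty\to 2}{\mtx{B}} \le 8\sqrt{s}$, which has probability at least $1/8$ by Theorem~\ref{thm:rdm-inf2}. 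On $\mathcal{E}$, Theorem~\ref{thm:pietsch} supplies a Pietsch factorization $\mtx{B} = \mtx{TD}$ with $\norm{\mtx{T}} \le \alpha$, where $\alpha = 8\cnst{K_P}\sqrt{s}$; then Theorem~\ref{thm:pietsch-eig} forces the optimal value of~\eqref{eqn:pietsch-opt} at this $\alpha$ to be nonpositive. Solving~\eqref{eqn:pietsch-opt} to additive accuracy $\eta$, a small positive constant, therefore returns a feasible diagonal $\mtx{F}$ with $\lambda_{\max}(\mtx{B}^\adj\mtx{B} - \alpha^2\mtx{F}) \le \eta$, and the rescaling $\widetilde{\mtx{F}} = (\alpha^2\mtx{F} + \eta\Id)/(\alpha^2 + \eta s)$ from Section~\ref{sec:pietsch-opt} turns it into a factorization $\mtx{B} = \mtx{TD}$ with $\mtx{D} = \widetilde{\mtx{F}}^{1/2}$, $\trace(\mtx{D}^2) = 1$, and $\norm{\mtx{T}} \le \sqrt{\alpha^2 + \eta s}$. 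Thresholding exactly as in Proposition~\ref{prop:pietsch-cols}, the set $\tau = \{j \in \sigma : d_{jj}^2 \le 2/s\}$ obeys $\abs{\tau} \ge s/2 \ge \tfrac{1}{2}\strank(\mtx{A})$ and
$$
\norm{\mtx{A}_\tau} = \norm{\mtx{T}\mtx{D}_\tau} \le \sqrt{\alpha^2 + \eta s}\cdot\sqrt{2/s} = \sqrt{128\,\cnst{K_P}^2 + 2\eta} < 15,
$$
since $\cnst{K_P}^2 \le \pi/2$ and $\eta$ is small (the exact-factorization constant is $8\sqrt{\pi} \approx 14.2$). The algorithm certifies a round by computing $\lambda_{\max}(\mtx{B}^\adj\mtx{B} - \alpha^2\mtx{F})$ and testing whether it is at most $\eta$; whenever it reports success, the two displayed estimates hold unconditionally, and the only role of $\mathcal{E}$ is to guarantee that a report of success occurs with probability at least $1/8$.

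\emph{Amplification and running time.} Running the round independently $N$ times and returning the first certified set $\tau_\star$, the probability that no round certifies is at most $(7/8)^N$, which lies below $1/5$ already for $N = 13$; hence $\tau = \tau_\star$ is produced with probability at least $4/5$. Since $\abs{\tau} \ge s/2$, we have $s = \bigO(\abs{\tau})$, so for each round: forming $\mtx{B}^\adj\mtx{B}$ costs $\bigO(s^2 m)$; solving the semidefinite program~\eqref{eqn:pietsch-opt}/\eqref{eqn:pietsch-dual}---the \textsc{maxcut} relaxation---to constant accuracy costs $\widetilde{\bigO}(s^{3.5})$ by Alizadeh's potential-reduction method; and the remaining steps (forming $\mtx{T} = \mtx{BD}^{-1}$, thresholding $\mtx{D}$, evaluating the $\lambda_{\max}$ certificate) are cheaper. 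Multiplying by $N = \bigO(1)$ gives the claimed $\widetilde{\bigO}(\abs{\tau}^2 m + \abs{\tau}^{3.5})$. If $\strank(\mtx{A})$ is not directly available and a valid $s$ must be located by a geometric search over $s \in \{1, 2, 4, \dots\}$, the per-stage costs form a geometric series dominated by its last term, and the extra repetitions needed for a union bound over the $\bigO(\log\abs{\tau})$ stages cost only logarithmic factors, so the bound is unaffected.

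\emph{Expected main obstacle.} The analytic work is already done (Theorems~\ref{thm:rdm-inf2} and~\ref{thm:pietsch-eig}), so the delicate part is quantitative: (i) choosing the optimization tolerance $\eta$ small enough that, after the $\widetilde{\mtx{F}}$-rescaling inflates $\norm{\mtx{T}}$ from $\alpha$ to $\sqrt{\alpha^2 + \eta s}$, the column-selection bound still clears the stated constant $15$; and (ii) ensuring that every step besides the single $s \times s$ semidefinite program---fixing or searching for $s$, the trivial-case test, the $\bigO(1)$ repetitions, and the certification---touches only the $s = \bigO(\abs{\tau})$ sampled columns, so that nothing pushes the running time past $\widetilde{\bigO}(\abs{\tau}^2 m + \abs{\tau}^{3.5})$.
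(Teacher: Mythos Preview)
Your proposal is correct and follows essentially the same route as the paper: a single round implements Theorem~\ref{thm:rdm-inf2} $+$ the eigenvalue program~\eqref{eqn:pietsch-opt} $+$ the thresholding of Proposition~\ref{prop:pietsch-cols}, with success probability $\ge 1/8$, and then repetition amplifies this while the geometric search over $s$ and the $\widetilde{\bigO}(s^2 m + s^{3.5})$ per-round cost give the running-time bound. The only cosmetic differences are that the paper certifies a round by testing $\norm{\mtx{A}_\tau}\le 15$ directly (rather than the eigenvalue residual) and builds the doubling search into the algorithm from the outset, using $8\log_2 s$ inner repetitions so a union bound over all stages gives the $4/5$ guarantee---exactly the mechanism you sketch in your last paragraph.
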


Remarkably, Algorithm~\ref{alg:kt} is sublinear in the size of the matrix when $\strank(\mtx{A}) = {\rm o}(n^{1/3.5})$.  Better methods for solving~\eqref{eqn:pietsch-opt} would strengthen this bound.

\begin{proof}
According to Section~\ref{sec:kt-pf}, the procedure {\sc Norm-Reduce} has failure probability less than $7/8$ when $s \leq 2 \strank(\mtx{A})$.  The probability the inner loop fails to produce an acceptable set $\tau_{\star}$ of size $s/2$ is at most $(7/8)^{8\log_2(s)}$.  So the probability the algorithm fails before $s \geq \strank(\mtx{A})$ is at most
$$
\sum\nolimits_{j=2}^\infty (7/8)^{8j} =
	\frac{(7/8)^{16}}{1 - (7/8)^8} < 0.2.
$$
With constant probability, we obtain a set $\tau_{\star}$ with cardinality at least $\strank(\mtx{A})/2$.


The cost of the procedure {\sc Norm-Reduce} is dominated by the cost of the Pietsch factorization, which is $\widetilde{\bigO}(s^2m + s^{3.5})$ for a fixed $s$.  Summing over $s$ and $k$, we find that the total cost of all the invocations of {\sc Norm-Reduce} is dominated (up to logarithmic factors) by the cost of the final invocation, during which the parameter $s \leq 2 \abs{\tau_{\star}}$.

An estimate of the spectral norm of $\mtx{A}_{\tau}$ can be obtained as a by-product of solving~\eqref{eqn:pietsch-opt}.  Indeed, Proposition~\ref{prop:pietsch-cols} and the discussion in Section~\ref{sec:pietsch-opt} show that we can bound the spectral norm in terms of the parameter $\alpha$ and the objective value obtained in \eqref{eqn:pietsch-opt}.
\end{proof}



\section{The Bourgain--Tzafriri Theorem}
	\label{sec:bt}


Our proof of the Bourgain--Tzafriri theorem is almost identical in structure with the proof of the Kashin--Tzafriri theorem.  This streamlined argument appears to be simpler than all previously published approaches, but it contains no significant conceptual innovations.  Our discussion culminates in an algorithm remarkably similar to Algorithm~\ref{alg:kt}.

\subsection{Preliminary results}

Suppose $\mtx{A}$ is a standardized matrix with $n$ columns.  We will work instead with a related matrix $\mtx{H} = \mtx{A}^\adj \mtx{A} - \Id$, which is called the \term{hollow Gram matrix}.  The advantage of considering the hollow Gram matrix is that we can perform column selection on $\mtx{A}$ simply by reducing the norm of $\mtx{H}$.

\begin{prop} \label{prop:H-A}
Suppose $\mtx{A}$ is a standardized matrix with hollow Gram matrix $\mtx{H}$.  If $\tau$ is a set of column indices for which $\norm{\mtx{H}_{\tau \times \tau}} \leq 0.5$, then $\kappa(\mtx{A}_{\tau}) \leq \sqrt{3}$.
\end{prop}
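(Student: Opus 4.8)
The plan is to reduce the statement to an elementary fact about the eigenvalues of the Gram matrix of $\mtx{A}_\tau$. First I would observe that, because $\mtx{A}$ is standardized, every diagonal entry of $\mtx{A}_\tau^\adj \mtx{A}_\tau$ equals $1$, while its off-diagonal entries are precisely the corresponding entries of $\mtx{H}$. In other words, the principal submatrix of the hollow Gram matrix is again a hollow Gram matrix, so $\mtx{A}_\tau^\adj\mtx{A}_\tau = \Id + \mtx{H}_{\tau \times \tau}$, where $\Id$ denotes the identity of the appropriate size. This is the only place the standardization hypothesis is used.

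Next I would invoke a perturbation bound for the eigenvalues of a Hermitian matrix (Weyl's inequality): every eigenvalue $\lambda$ of $\mtx{A}_\tau^\adj\mtx{A}_\tau$ satisfies $\abs{\lambda - 1} \leq \norm{\mtx{H}_{\tau\times\tau}} \leq 0.5$, hence $\lambda \in [0.5,\,1.5]$. Equivalently, $0.5\,\Id \psdle \mtx{A}_\tau^\adj\mtx{A}_\tau \psdle 1.5\,\Id$. Since the eigenvalues of $\mtx{A}_\tau^\adj\mtx{A}_\tau$ are the squares of the singular values of $\mtx{A}_\tau$, this gives $\smin(\mtx{A}_\tau)^2 \geq 0.5$ and $\smax(\mtx{A}_\tau)^2 \leq 1.5$. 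In particular $\smin(\mtx{A}_\tau) > 0$, so $\mtx{A}_\tau$ has full column rank and its condition number is finite.

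Finally I would unwind the definition of the condition number. For a unit vector $\vct{x}$ we have $\smin(\mtx{A}_\tau) \leq \enorm{\mtx{A}_\tau\vct{x}} \leq \smax(\mtx{A}_\tau)$, with both extremes attained over the unit sphere, so $\kappa(\mtx{A}_\tau) = \smax(\mtx{A}_\tau)/\smin(\mtx{A}_\tau) \leq \sqrt{1.5/0.5} = \sqrt{3}$, as claimed. There is no real obstacle in this argument; the only points that merit a moment's care are confirming that restricting the hollow Gram matrix to $\tau$ introduces no diagonal terms, and matching the paper's ratio-of-norms definition of $\kappa$ with the ratio of the extreme singular values.
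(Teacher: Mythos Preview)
Your proposal is correct and follows essentially the same route as the paper: both arguments use the identity $\mtx{A}_\tau^\adj\mtx{A}_\tau = \Id + \mtx{H}_{\tau\times\tau}$ to place the eigenvalues of $\mtx{A}_\tau^\adj\mtx{A}_\tau$ in $[0.5,1.5]$ and then read off the condition-number bound $\sqrt{1.5/0.5}=\sqrt{3}$. The only cosmetic difference is that you name Weyl's inequality explicitly, whereas the paper simply shifts the spectrum of $\mtx{H}_{\tau\times\tau}$ by one.
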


\begin{proof}
The hypothesis $\norm{\mtx{H}_{\tau \times \tau}} \leq 0.5$ implies that the eigenvalues of $\mtx{H}_{\tau \times \tau}$ lie in the range $[-0.5, 0.5]$.  Since $\mtx{H}_{\tau \times \tau} = \mtx{A}_{\tau}^\adj \mtx{A}_{\tau} - \Id$, the eigenvalues of $\mtx{A}_{\tau}^\adj \mtx{A}_\tau$ fall in the interval $[0.5, 1.5]$.  An equivalent condition is that
$0.5 \leq \enormsq{ \mtx{A}_{\tau} \vct{x} } \leq 1.5$ whenever $\enorm{\vct{x}} = 1$.
We conclude that
$$
\kappa(\mtx{A}_\tau) = \max\left\{
	\frac{\enorm{\mtx{A}_\tau \vct{x}}}{\enorm{\mtx{A}_{\tau} \vct{y}}} :
	\enorm{\vct{x}} = \enorm{\vct{y}} = 1 \right\}
	\leq \sqrt{\frac{1.5}{0.5}} = \sqrt{3}.
$$
Thus, a norm bound for $\mtx{H}_{\tau \times \tau}$ yields a condition number bound for $\mtx{A}_{\tau}$.
\end{proof}

As we mentioned before, random selection may reduce other norms even if it does not reduce the spectral norm.  Define the natural norm on linear maps from $\ell_\infty$ to $\ell_1$ by the formula
$$
\pnorm{\infty\to1}{\mtx{G}}
	= \max\{ \pnorm{1}{ \mtx{G}\vct{x} } : \infnorm{\vct{x}} = 1 \}.
$$
This norm is closely related to the cut norm, which plays a starring role in graph theory~\cite{AN04:Approximating-Cut}.  For a general $s \times s$ matrix $\mtx{G}$, the best inequality between the $(\infty,1)$ norm and the spectral norm is $\pnorm{\infty\to1}{\mtx{G}} \leq s \norm{\mtx{G}}$.  Rohn \cite{Roh00:Computing-Norm} has established that there is a class of positive semidefinite, integer matrices for which it is {\sf NP}-hard to determine the $(\infty,1)$ norm within an absolute tolerance of 1/2.  Nevertheless, it can be approximated within a small relative factor in polynomial time~\cite{AN04:Approximating-Cut}.

The $(\infty, 1)$ norm decreases when we randomly sample a principal submatrix.  The following result, which we establish in Appendix~\ref{app:inf1}, is a direct consequence of Rudelson and Vershynin's work on the cut norm of random submatrices~\cite[Thm.~1.5]{RV07:Sampling-Large}.

\begin{thm} \label{thm:inf1-reduce}
Suppose $\mtx{A}$ is an $n$-column standardized matrix with hollow Gram matrix $\mtx{H}$.  Choose
$$
s \leq \lceil \cnst{c} \cdot \strank(\mtx{A}) \rceil,
$$
and draw a uniformly random subset $\sigma$ with cardinality $s$ from $\{1,2, \dots, n\}$.  Then
$$
\Expect \pnorm{\infty\to1}{\mtx{H}_{\sigma \times \sigma}}
	\leq \frac{s}{9}.
$$
In particular, $\pnorm{\infty\to1}{\mtx{H}_{\sigma \times \sigma}} \leq s/8$ with probability at least $1/9$.
\end{thm}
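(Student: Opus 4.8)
This result is, in essence, the Rudelson--Vershynin random-submatrix estimate \cite[Thm.~1.5]{RV07:Sampling-Large} transcribed into the language of the hollow Gram matrix, so the plan is to invoke that theorem for $\mtx{H} = \mtx{A}^\adj\mtx{A} - \Id$ and then reconcile normalizations and constants. The cited bound controls a random principal submatrix in terms of a handful of structural quantities, so the first step is to collect them. Because $\mtx{A}$ is standardized, $\fnormsq{\mtx{A}} = n$ and hence $\normsq{\mtx{A}} = n/\strank(\mtx{A})$. The eigenvalues of $\mtx{A}^\adj\mtx{A}$ lie in $[0, \normsq{\mtx{A}}]$, so those of $\mtx{H}$ lie in $[-1, \normsq{\mtx{A}}-1]$, giving $\norm{\mtx{H}} \leq n/\strank(\mtx{A})$; moreover $\fnormsq{\mtx{A}^\adj\mtx{A}} \leq \normsq{\mtx{A}}\,\fnormsq{\mtx{A}} = n^2/\strank(\mtx{A})$, and removing the contribution $n$ of the unit diagonal of $\mtx{A}^\adj\mtx{A}$ gives $\fnormsq{\mtx{H}} \leq n^2/\strank(\mtx{A})$. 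Finally, $\mtx{H}$ is Hermitian with vanishing diagonal --- so every principal submatrix $\mtx{H}_{\sigma\times\sigma}$ is again hollow --- its entries satisfy $\abs{\ip{\vct{a}_j}{\vct{a}_k}} \leq 1$, and a crude bilinear estimate, $\abs{\ip{\vct{y}}{\mtx{H}\vct{x}}} \leq (\normsq{\mtx{A}} + 1)\, n$ for $\vct{x}, \vct{y}$ in the $\ell_\infty$ unit ball, yields $\pnorm{\infty\to1}{\mtx{H}} \leq \cnst{C}\, n^2/\strank(\mtx{A})$ as well.

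The second step is to substitute these bounds into the Rudelson--Vershynin estimate and invoke the hypothesis $s \leq \lceil \cnst{c}\cdot\strank(\mtx{A}) \rceil$. Their bound on $\Expect\pnorm{\infty\to1}{\mtx{H}_{\sigma\times\sigma}}$ --- or, up to a universal factor, on the cut norm of the random submatrix --- is a sum of terms of the schematic form $(s/n)^2\, \pnorm{\infty\to1}{\mtx{H}}$ and $(s^{3/2}/n)\min\{\fnorm{\mtx{H}},\, \sqrt{n}\,\norm{\mtx{H}}\}$, possibly carrying logarithmic factors that in the present setting are seen to be absent or harmless. Plugging in the estimates above, the first term is at most $\cnst{C}\, s^2/\strank(\mtx{A}) \leq \cnst{C}\cnst{c}\cdot s$ and the second is at most $\cnst{C}\, s^{3/2}/\sqrt{\strank(\mtx{A})} \leq \cnst{C}\sqrt{\cnst{c}}\cdot s$. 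Choosing the universal constant $\cnst{c}$ sufficiently small drives the total below $s/9$, which is the asserted bound $\Expect\pnorm{\infty\to1}{\mtx{H}_{\sigma\times\sigma}} \leq s/9$. (Taking $\mtx{A}$ to be a matrix whose columns come in identical pairs, so that $\strank(\mtx{A}) = n/2$, shows that a bound of this shape does force $\cnst{c}$ to be a genuine small constant, so no essentially better dependence is available.)

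The ``in particular'' clause follows from Markov's inequality: the random variable $\pnorm{\infty\to1}{\mtx{H}_{\sigma\times\sigma}}$ is nonnegative with mean at most $s/9$, so it exceeds $s/8$ with probability at most $(s/9)/(s/8) = 8/9$, hence $\pnorm{\infty\to1}{\mtx{H}_{\sigma\times\sigma}} \leq s/8$ with probability at least $1/9$. The real work --- and the main obstacle --- is bookkeeping rather than insight: one must verify that the cited theorem is stated for, or reduces to, a uniformly random index set of \emph{exactly} $s$ elements used for both rows and columns of the submatrix (an independent Bernoulli$(s/n)$ inclusion rule, or independent row and column sets, would be recovered by conditioning on cardinality together with a standard concentration estimate); that the conversion between the cut norm and the $(\infty\to1)$ norm (a factor of at most $4$ over $\Rspace{}$, and a comparable universal constant over $\Cspace{}$) is tracked correctly; and that all the universal constants --- including any logarithmic factors --- combine to produce the clean numerical values $9$ and $8$. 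The ceiling in the hypothesis and the degenerate cases (for instance $s = 1$, where $\mtx{H}_{\sigma\times\sigma}$ is the scalar $0$) cause no trouble once $\cnst{c} \leq 1$, since then $\strank(\mtx{A}) \geq 1$ forces $s \leq 2\,\strank(\mtx{A})$.
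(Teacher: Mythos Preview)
Your proposal is correct and follows essentially the same route as the paper: invoke the Rudelson--Vershynin random-submatrix bound, estimate the structural quantities of $\mtx{H}$ using $\norm{\mtx{A}}^2 = n/\strank(\mtx{A})$, substitute, and absorb constants into $\cnst{c}$. The only cosmetic differences are that the paper states the Rudelson--Vershynin bound using the column-norm $\pnorm{\rm col}{\mtx{H}} = \sum_j \enorm{\mtx{H}\onevct_j}$ rather than $\fnorm{\mtx{H}}$ (leading to the same $s^{3/2}/\sqrt{\strank(\mtx{A})}$ estimate), and it handles the uniform-versus-Bernoulli sampling issue you flag by an explicit Poissonization lemma rather than leaving it as bookkeeping.
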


To connect the $(\infty,1)$ norm with the spectral norm, 
we call on the celebrated factorization of Grothendieck~\cite[p.~56]{Pis86:Factorization-Linear}.

\begin{thm}[Grothendieck Factorization] \label{thm:groth-fact}
Each matrix $\mtx{G}$ can be factored as $\mtx{G} = \mtx{D}_1 \mtx{T} \mtx{D}_2$ where
\begin{enumerate}
\item	$\mtx{D}_i$ is a nonnegative, diagonal matrix with $\trace(\mtx{D}_i^2) = 1$ for $i = 1,2$, and
\item	$\pnorm{\infty\to1}{\mtx{G}} \leq \norm{\mtx{T}} \leq \cnst{K_G} \pnorm{\infty\to1}{\mtx{G}}$.
\end{enumerate}
When $\mtx{G}$ is Hermitian, we may take $\mtx{D}_1 = \mtx{D}_2$.
\end{thm}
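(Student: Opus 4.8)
The plan is to follow the template set by Pietsch factorization (Theorems~\ref{thm:pietsch} and~\ref{thm:pietsch-eig}): the lower bound $\pnorm{\infty\to1}{\mtx{G}} \leq \norm{\mtx{T}}$ holds for \emph{every} admissible factorization and is elementary, while the existence of a factorization meeting the upper bound is a restatement of Grothendieck's inequality. For the lower bound, suppose $\mtx{G} = \mtx{D}_1 \mtx{T} \mtx{D}_2$ with $\trace(\mtx{D}_i^2) = 1$, and let $\infnorm{\vct{x}} \leq 1$. Chain three estimates: $\pnorm{2}{\mtx{D}_2 \vct{x}}^2 = \sum_j d_{2,jj}^2 \abs{x_j}^2 \leq \trace(\mtx{D}_2^2) = 1$; then $\pnorm{2}{\mtx{T}\mtx{D}_2\vct{x}} \leq \norm{\mtx{T}}$; then $\pnorm{1}{\mtx{D}_1 \vct{y}} = \sum_i d_{1,ii}\abs{y_i} \leq \pnorm{2}{\vct{y}}$ by Cauchy--Schwarz and $\trace(\mtx{D}_1^2) = 1$. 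Composing yields $\pnorm{1}{\mtx{G}\vct{x}} \leq \norm{\mtx{T}}$, hence $\pnorm{\infty\to1}{\mtx{G}} \leq \norm{\mtx{T}}$.

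For the upper bound I would first establish an analogue of Theorem~\ref{thm:pietsch-eig}: $\mtx{G}$ admits a factorization $\mtx{G} = \mtx{D}_1 \mtx{T} \mtx{D}_2$ with $\norm{\mtx{T}} \leq \alpha$ if and only if there are nonnegative diagonal matrices $\mtx{F}_1, \mtx{F}_2$ with $\trace(\mtx{F}_1) = \trace(\mtx{F}_2) = 1$ such that
\[
\begin{bmatrix} \alpha \mtx{F}_1 & \mtx{G} \\ \mtx{G}^\adj & \alpha \mtx{F}_2 \end{bmatrix} \psdge \mtx{0}.
\]
In the forward direction, take $\mtx{F}_i = \mtx{D}_i^2$; expanding the quadratic form of the block matrix at $(\vct{u}, \vct{v})$ turns the cross term into $2\real\ip{\mtx{D}_1\vct{u}}{\mtx{T}\mtx{D}_2\vct{v}}$, and completing the square against $\alpha(\enormsq{\mtx{D}_1\vct{u}} + \enormsq{\mtx{D}_2\vct{v}})$ gives the inequality. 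In the reverse direction, set $\mtx{D}_i = \mtx{F}_i^{1/2}$ and $\mtx{T} = \mtx{D}_1^\psinv \mtx{G}\mtx{D}_2^\psinv$; exactly as in Theorem~\ref{thm:pietsch-eig}, a zero entry of $\mtx{F}_1$ (resp. $\mtx{F}_2$) forces the corresponding row (resp. column) of $\mtx{G}$ to vanish, so $\mtx{D}_1\mtx{T}\mtx{D}_2 = \mtx{G}$, and factoring the diagonal matrices out of the block inequality (Sylvester's inertia theorem, on the joint support of $\mtx{F}_1$ and $\mtx{F}_2$) reduces it to $\alpha\Id - \alpha^{-1}\mtx{T}\mtx{T}^\adj \psdge \mtx{0}$, i.e.\ $\norm{\mtx{T}} \leq \alpha$.

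The feasible set in $(\mtx{F}_1, \mtx{F}_2)$ is a spectrahedron, and a rescaling argument --- conjugating the block matrix by $\diag(t^{1/2}\Id, t^{-1/2}\Id)$, which preserves positivity while rebalancing the two traces --- shows that the smallest admissible $\alpha$ equals the optimal value of the semidefinite program
\[
\min \ \tfrac12\bigl( \trace(\mtx{E}_1) + \trace(\mtx{E}_2) \bigr)
\quad\subjto\quad
\mtx{E}_1, \mtx{E}_2 \text{ diagonal and } \begin{bmatrix} \mtx{E}_1 & \mtx{G} \\ \mtx{G}^\adj & \mtx{E}_2 \end{bmatrix} \psdge \mtx{0}.
\]
Dualizing this program --- as in Section~\ref{sec:other-forms} --- produces the standard semidefinite relaxation of $\pnorm{\infty\to1}{\mtx{G}}$, namely the \textsc{maxcut} program (cf.\ \eqref{eqn:pietsch-dual}) applied to the Hermitian dilation $\left[\begin{smallmatrix}\mtx{0} & \mtx{G}\\ \mtx{G}^\adj & \mtx{0}\end{smallmatrix}\right]$. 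Grothendieck's inequality is precisely the assertion that this relaxation overestimates $\pnorm{\infty\to1}{\mtx{G}}$ by at most the Grothendieck constant $\cnst{K_G}$; combined with strong duality (Slater's condition plainly holds), it bounds the smallest admissible $\alpha$ by $\cnst{K_G}\pnorm{\infty\to1}{\mtx{G}}$, which furnishes the factorization. The Hermitian refinement is then immediate: when $\mtx{G} = \mtx{G}^\adj$, conjugating the block matrix by the block swap shows $(\mtx{F}_2, \mtx{F}_1)$ is feasible whenever $(\mtx{F}_1, \mtx{F}_2)$ is, and convexity of the feasible set lets us replace both matrices by $\tfrac12(\mtx{F}_1 + \mtx{F}_2)$.

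The one genuinely deep input is Grothendieck's inequality, which I would invoke rather than reprove --- it is exactly the reason the constant $\cnst{K_G}$ exceeds $1$ and resists an elementary argument. Everything else transcribes the Pietsch development, and, as there, the only real care required is the bookkeeping around zero diagonal entries and the attendant pseudoinverses, handled by restricting to the joint support of $\mtx{F}_1$ and $\mtx{F}_2$ and appealing to Sylvester's theorem on inertia.
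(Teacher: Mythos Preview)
The paper does not prove Theorem~\ref{thm:groth-fact}; it is quoted as a classical result and attributed to Grothendieck via Pisier's monograph~\cite[p.~56]{Pis86:Factorization-Linear}. So there is no in-paper proof to compare against.

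Your argument is nonetheless correct and fits the paper's program well. The lower bound is the standard elementary estimate. Your block semidefinite characterization of admissible factorizations is the natural two-sided generalization of the paper's Theorem~\ref{thm:groth-eig}, which handles only the Hermitian case $\mtx{D}_1 = \mtx{D}_2$; the zero-entry bookkeeping and the appeal to Sylvester's inertia theorem carry over verbatim. The rescaling step is sound: conjugating by $\diag(t^{1/2}\Id, t^{-1/2}\Id)$ lets you equalize the two traces, so the least admissible $\alpha$ for a feasible $(\mtx{E}_1,\mtx{E}_2)$ is $\sqrt{\trace(\mtx{E}_1)\trace(\mtx{E}_2)}$, and the AM--GM inequality identifies this infimum with that of $\tfrac12(\trace(\mtx{E}_1)+\trace(\mtx{E}_2))$. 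The dualization to the \textsc{maxcut} relaxation of the Hermitian dilation $\left[\begin{smallmatrix}\mtx{0} & \mtx{G}\\ \mtx{G}^\adj & \mtx{0}\end{smallmatrix}\right]$ is precisely what Remark~\ref{rem:inf1-norm} gestures toward. You are right that the only non-elementary input is Grothendieck's inequality in its SDP form---that this relaxation overestimates $\pnorm{\infty\to1}{\mtx{G}}$ by at most $\cnst{K_G}$---and invoking it rather than reproving it is appropriate, since the paper does exactly the same. The Hermitian symmetrization by block-swapping and averaging is also standard and correct.
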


The precise value of the Grothendieck constant $\cnst{K_G}$ remains an outstanding open question, but it is known to depend on the scalar field~\cite[Sec.~5e]{Pis86:Factorization-Linear}.
\begin{itemize}
\item	When the scalar field is real, $1.570 \leq \pi/2 \leq \cnst{K_G}(\Rspace{}) \leq \pi/(2\log(1+\sqrt{2})) \leq 1.783$.

\item	When the scalar field is complex, $1.338 \leq \cnst{K_G}(\Cspace{}) \leq 1.405$.
\end{itemize}
For positive semidefinite $\mtx{G}$, the real (resp., complex) Grothendieck constant equals the square of the real (resp., complex) Pietsch constant because $\pnorm{\infty\to1}{\mtx{B}^\adj \mtx{B}} = \pnorm{\infty\to2}{\mtx{B}}^2$.

The following proposition describes the role of the Grothendieck factorization in the selection of submatrices with controlled spectral norm.

\begin{prop} \label{prop:groth-col}
Suppose $\mtx{G}$ is an $s \times s$ Hermitian matrix.  There is a set $\tau$ of column indices for which
$$
\abs{\tau} \geq \frac{s}{2}
\quad\text{and}\quad
\norm{\mtx{G}_{\tau \times \tau}} \leq
	\frac{2 \cnst{K_G}}{s} \pnorm{\infty\to1}{\mtx{G}}.
$$
\end{prop}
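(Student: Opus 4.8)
The plan is to imitate the proof of Proposition~\ref{prop:pietsch-cols}, substituting the Hermitian version of Grothendieck's factorization for the Pietsch factorization. First I would apply Theorem~\ref{thm:groth-fact} to the Hermitian matrix $\mtx{G}$ to produce a symmetric factorization $\mtx{G} = \mtx{D}\mtx{T}\mtx{D}$, where $\mtx{D}$ is nonnegative and diagonal with $\trace(\mtx{D}^2) = 1$ and $\norm{\mtx{T}} \leq \cnst{K_G}\,\pnorm{\infty\to1}{\mtx{G}}$.

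Next I would single out the indices where $\mtx{D}$ is small: set $\tau = \{ j : d_{jj}^2 \leq 2/s \}$. Since $\sum_j d_{jj}^2 = 1$, Markov's inequality shows that at most $s/2$ indices can have $d_{jj}^2 > 2/s$, hence $\abs{\tau} \geq s/2$. I would then restrict the factorization to $\tau$. Because $\mtx{D}$ is diagonal, the principal submatrix satisfies $\mtx{G}_{\tau\times\tau} = \mtx{D}_{\tau}\,\mtx{T}_{\tau\times\tau}\,\mtx{D}_{\tau}$, where $\mtx{D}_{\tau}$ is the diagonal submatrix of $\mtx{D}$ on $\tau$ and $\mtx{T}_{\tau\times\tau}$ is the corresponding principal submatrix of $\mtx{T}$. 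Taking operator norms and using $\norm{\mtx{D}_{\tau}} \leq \sqrt{2/s}$ together with the fact that compressing a matrix to a set of coordinates cannot increase its spectral norm, I obtain
$$
\norm{\mtx{G}_{\tau\times\tau}} \leq \norm{\mtx{D}_{\tau}}^2 \, \norm{\mtx{T}_{\tau\times\tau}} \leq \frac{2}{s}\,\norm{\mtx{T}} \leq \frac{2\cnst{K_G}}{s}\,\pnorm{\infty\to1}{\mtx{G}},
$$
which is exactly the claimed estimate.

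I do not expect a genuine obstacle here; the argument is essentially a transcription of Proposition~\ref{prop:pietsch-cols}. The only point deserving a moment's attention is that the symmetric Grothendieck factorization restricts cleanly to a principal submatrix and that $\norm{\mtx{T}_{\tau\times\tau}} \leq \norm{\mtx{T}}$ --- both hold because $\mtx{D}$ is diagonal and because restricting the quadratic form of a Hermitian matrix to a coordinate subspace only shrinks the operator norm.
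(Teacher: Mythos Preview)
Your proposal is correct and mirrors the paper's own proof exactly: the paper also takes the symmetric Grothendieck factorization $\mtx{G} = \mtx{DTD}$, sets $\tau = \{ j : d_{jj}^2 \leq 2/s \}$, and then refers back to the argument of Proposition~\ref{prop:pietsch-cols} for the remaining estimates. Your write-up simply spells out those details.
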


\begin{proof}
Consider a Grothendieck factorization $\mtx{G} = \mtx{DTD}$, and identify $\tau = \{ j : d_{jj}^2 \leq s/2 \}$.  The remaining details echo the proof of Proposition~\ref{prop:pietsch-cols}.
\end{proof}

\subsection{Proof of Bourgain--Tzafriri}

Suppose $\mtx{A}$ is a standardized matrix with $n$ columns, and consider its hollow Gram matrix $\mtx{H}$.  Theorem~\ref{thm:inf1-reduce} provides a set $\sigma$ for which
$$
\abs{\sigma} \geq \cnst{c} \cdot \strank(\mtx{A})
\quad\text{and}\quad
\pnorm{\infty\to1}{\mtx{H}_{\sigma \times \sigma}} \leq \frac{s}{8}.
$$
Apply Proposition~\ref{prop:groth-col} to the $s \times s$ matrix $\mtx{G} = \mtx{H}_{\sigma\times\sigma}$ to obtain a further subset $\tau$ inside $\sigma$ with
$$
\abs{\tau} \geq \frac{s}{2}
\quad\text{and}\quad
\norm{\mtx{G}_{\tau \times \tau}}
	\leq \frac{2\cnst{K_G}}{s} \pnorm{\infty\to1}{\mtx{G}}.
$$
Since $2 \cnst{K_G} < 4$ and $\mtx{H}_{\tau \times \tau} = \mtx{G}_{\tau\times\tau}$, we determine that
$$
\abs{\tau} \geq \frac{\cnst{c}}{2} \cdot \strank(\mtx{A})
\quad\text{and}\quad
\norm{\mtx{H}_{\tau \times \tau}} \leq 0.5.
$$
In view of Proposition~\ref{prop:H-A}, we conclude $\kappa(\mtx{A}_{\tau}) \leq \sqrt{3}$.

Now, take another step back and notice that this here argument is nearly algorithmic.  The random selection of $\sigma$ can easily be implemented in practice, even though the proof does not specify the value of $\cnst{c}$.  Given a Grothendieck factorization $\mtx{G} = \mtx{DTD}$, it is straightforward to identify the subset $\tau$.  The challenge, as before, is to produce the factorization.  

\subsection{Grothendieck factorization via convex optimization}

As with the Pietsch factorization, the Grothendieck factorization can be identified from the solution to a convex program.

\begin{thm} \label{thm:groth-eig}
Suppose $\mtx{G}$ is Hermitian.  The factorization $\mtx{G} = \mtx{DTD}$ satisfies $\norm{\mtx{T}} \leq \alpha$ if and only if $\mtx{D}$ satisfies
\begin{equation} \label{eqn:groth-nsd}
\lambda_{\max}
\begin{bmatrix}
- \alpha \mtx{D}^2 & \mtx{G} \\
\mtx{G} & -\alpha \mtx{D}^2
\end{bmatrix}
\leq 0.
\end{equation}
In particular, if no $\mtx{D}$ verifies this bound, then no factorization $\mtx{G} = \mtx{DTD}$ admits $\norm{\mtx{T}} \leq \alpha$.
\end{thm}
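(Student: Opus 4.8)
The plan is to follow the template of the proof of Theorem~\ref{thm:pietsch-eig}, replacing the quadratic-form comparison used there by the standard $2 \times 2$ block-matrix characterization of the operator norm: for every matrix $\mtx{T}$ and every $\alpha \geq 0$,
$$
\norm{\mtx{T}} \leq \alpha
\quad\Longleftrightarrow\quad
\begin{bmatrix} \alpha \Id & \mtx{T} \\ \mtx{T}^\adj & \alpha \Id \end{bmatrix} \psdge \mtx{0}.
$$
I would also record the elementary fact that conjugating a block matrix by $\diag(\Id, -\Id)$ reverses the signs of its off-diagonal blocks, so that the inequality~\eqref{eqn:groth-nsd} is equivalent to
$$
\begin{bmatrix} \alpha \mtx{D}^2 & \mtx{G} \\ \mtx{G} & \alpha \mtx{D}^2 \end{bmatrix} \psdge \mtx{0}.
$$
When $\alpha = 0$, both sides of the asserted equivalence force $\mtx{G} = \mtx{0}$, so we may assume $\alpha > 0$.

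For the forward implication, suppose $\mtx{G} = \mtx{DTD}$ with $\norm{\mtx{T}} \leq \alpha$; here $\mtx{T}$ need not be Hermitian. Conjugating the first block inequality above by $\diag(\mtx{D}, \mtx{D})$ gives
$$
\begin{bmatrix} \alpha \mtx{D}^2 & \mtx{DTD} \\ \mtx{D}\mtx{T}^\adj\mtx{D} & \alpha \mtx{D}^2 \end{bmatrix} \psdge \mtx{0},
$$
and since $\mtx{D}$ is a real diagonal matrix and $\mtx{G}$ is Hermitian we have $\mtx{DTD} = \mtx{G}$ and $\mtx{D}\mtx{T}^\adj\mtx{D} = (\mtx{DTD})^\adj = \mtx{G}$, which is precisely the rephrased form of~\eqref{eqn:groth-nsd}. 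For the converse, assume~\eqref{eqn:groth-nsd}, equivalently that $\begin{bmatrix} \alpha \mtx{D}^2 & \mtx{G} \\ \mtx{G} & \alpha \mtx{D}^2 \end{bmatrix} \psdge \mtx{0}$. Exactly as in the Pietsch argument, a zero entry $d_{jj} = 0$ puts a zero on the diagonal of this positive semidefinite matrix, which forces the $j$th row of $\mtx{G}$ and, by Hermitian symmetry, the $j$th column of $\mtx{G}$ to vanish. Setting $\mtx{T} = \mtx{D}^\psinv \mtx{G} \mtx{D}^\psinv$, which is Hermitian, one then has $\mtx{DTD} = \mtx{P}\mtx{G}\mtx{P} = \mtx{G}$, where $\mtx{P} = \mtx{D}\mtx{D}^\psinv$ is the orthogonal projector onto the support of $\mtx{D}$. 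Factoring $\diag(\mtx{D}, \mtx{D})$ out of the inequality produces
$$
\diag(\mtx{D}, \mtx{D})
\begin{bmatrix} \alpha \mtx{P} & \mtx{T} \\ \mtx{T} & \alpha \mtx{P} \end{bmatrix}
\diag(\mtx{D}, \mtx{D}) \psdge \mtx{0},
$$
and since the inner matrix is supported on the range of $\diag(\mtx{P}, \mtx{P})$, Sylvester's inertia theorem (applied as in the proof of Theorem~\ref{thm:pietsch-eig}) yields $\begin{bmatrix} \alpha \mtx{P} & \mtx{T} \\ \mtx{T} & \alpha \mtx{P} \end{bmatrix} \psdge \mtx{0}$. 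Adding the positive semidefinite matrix $\alpha \cdot \diag(\Id - \mtx{P}, \Id - \mtx{P})$ upgrades this to $\begin{bmatrix} \alpha \Id & \mtx{T} \\ \mtx{T} & \alpha \Id \end{bmatrix} \psdge \mtx{0}$, so $\norm{\mtx{T}} \leq \alpha$; together with the standing normalization $\trace(\mtx{D}^2) = 1$, this exhibits the desired factorization.

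The block characterization of the norm, the sign flip, and the diagonal conjugations are all routine bookkeeping. The only genuine subtlety — and the main point to get right — is the possible singularity of $\mtx{D}$: one must check that $\mtx{T} = \mtx{D}^\psinv \mtx{G} \mtx{D}^\psinv$ reproduces $\mtx{G}$ after multiplication by $\mtx{D}$ on both sides, which rests on the vanishing rows and columns of $\mtx{G}$, and that Sylvester's inertia argument is carried out on the range of $\mtx{D}$ rather than on the whole space. Both issues are handled exactly as in the proof of Theorem~\ref{thm:pietsch-eig}, so the argument introduces no new difficulties.
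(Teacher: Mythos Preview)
Your proof is correct. It differs from the paper's argument in the intermediate characterization it passes through, and the comparison is worth noting.

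The paper reduces the block condition~\eqref{eqn:groth-nsd} to the pair of one-block inequalities $\pm\mtx{G} - \alpha\mtx{D}^2 \psdle \mtx{0}$ via the explicit orthogonal conjugation
\[
\begin{bmatrix} -\alpha\mtx{D}^2 & \mtx{G} \\ \mtx{G} & -\alpha\mtx{D}^2 \end{bmatrix}
= \tfrac{1}{2}\begin{bmatrix}\Id & \Id \\ -\Id & \Id\end{bmatrix}^{\adj}
\begin{bmatrix}\mtx{G}-\alpha\mtx{D}^2 & \\ & -\mtx{G}-\alpha\mtx{D}^2\end{bmatrix}
\begin{bmatrix}\Id & \Id \\ -\Id & \Id\end{bmatrix},
\]
and then treats each inequality in parallel with the Pietsch proof. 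You instead invoke the Schur-type block characterization $\norm{\mtx{T}}\le\alpha \Leftrightarrow \begin{bmatrix}\alpha\Id & \mtx{T}\\ \mtx{T}^{\adj} & \alpha\Id\end{bmatrix}\psdge\mtx{0}$ and conjugate once by $\diag(\mtx{D},\mtx{D})$ (plus the sign flip by $\diag(\Id,-\Id)$). Both routes arrive at the same place; yours stays inside the $2\times2$ block world throughout, while the paper's block-diagonalization makes the equivalence with the two scalar semidefinite bounds on $\mtx{G}$ more visible. Your treatment of the degenerate case $d_{jj}=0$ via the ``zero diagonal entry of a psd matrix forces zero row/column'' principle is equivalent to, and a bit slicker than, the paper's explicit quadratic-form test; and your reverse direction spells out the Sylvester step and the addition of $\alpha\,\diag(\Id-\mtx{P},\Id-\mtx{P})$ more fully than the paper does.
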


\begin{proof}
To check the forward implication, we essentially repeat the argument we used in Theorem~\ref{thm:pietsch-eig} for the Pietsch case.  This reasoning yields the pair of relations
$$
\mtx{G} - \alpha \mtx{D}^2 \psdle \mtx{0}
\quad\text{and}\quad
- \mtx{G} - \alpha \mtx{D}^2 \psdle \mtx{0}.
$$
Together, these two relations are equivalent with \eqref{eqn:groth-nsd} because
$$
\begin{bmatrix}
- \alpha \mtx{D}^2 & \mtx{G} \\ \mtx{G} & -\alpha \mtx{D}^2
\end{bmatrix}
= \frac{1}{2}
\begin{bmatrix} \Id & \Id \\ -\Id & \Id \end{bmatrix}^\adj
\begin{bmatrix}
	\mtx{G} - \alpha \mtx{D}^2 & \\ & - \mtx{G} - \alpha \mtx{D}^2 \end{bmatrix}
\begin{bmatrix} \Id & \Id \\ -\Id & \Id \end{bmatrix}.
$$

To prove the reverse implication, we assume that~\eqref{eqn:groth-nsd} holds.  First, we must check that $d_{jj} = 0$ implies that $\vct{g}_j = \vct{0}$.  To verify this claim, observe that
$$
0 \geq
\begin{bmatrix} \alpha \\ \vct{g}_j \end{bmatrix}^\adj
\begin{bmatrix} 0 & \vct{g}_j^\adj \\ \vct{g}_j & -\alpha\mtx{D}^2 \end{bmatrix}
\begin{bmatrix} \alpha \\ \vct{g}_j \end{bmatrix}
=  \alpha \left( 2 \enormsq{\vct{g}_j} - \vct{g}_j^\adj \mtx{D}^2 \vct{g}_j \right)
\geq \alpha \enormsq{\vct{g}_j}
$$
because $\trace(\mtx{D}^2) = 1$.  Therefore, we may construct a Grothendieck factorization $\mtx{G} = \mtx{DTD}$ with $\norm{\mtx{T}} \leq \alpha$ by setting $\mtx{T} = \mtx{D}^\psinv \mtx{G} \mtx{D}^\psinv$.
\end{proof}




This discussion leads us to frame the eigenvalue minimization problem
\begin{equation} \label{eqn:groth-opt}
\min \ \lambda_{\max}
\begin{bmatrix}
- \alpha \mtx{F} & \mtx{G} \\
\mtx{G} & -\alpha \mtx{F}
\end{bmatrix}
\quad\subjto\quad
\text{$\trace(\mtx{F}) = 1$, $\mtx{F}$ diagonal, $\mtx{F} \geq \mtx0$}
\end{equation}
Owing to Theorem~\ref{thm:groth-eig}, there is a factorization $\mtx{G} = 
\mtx{DTD}$ with $\norm{\mtx{T}} \leq \alpha$ if and only if the value of \eqref{eqn:groth-opt} is nonpositive.

As in Section~\ref{sec:pietsch-opt}, we can easily construct Grothendieck factorizations from (imprecise) solutions to the problem \eqref{eqn:groth-opt}.
The proof of Bourgain--Tzafriri suggests that an appropriate value for the parameter $\alpha = s/4$.  Furthermore, we do not need to solve \eqref{eqn:groth-opt} to optimality to obtain the required information.  Indeed, it suffices to produce a feasible point with an objective value of $\bigO(1)$.  



To solve \eqref{eqn:groth-opt} in practice, we again propose the Entropic Mirror Descent algorithm \cite{BT03:Mirror-Descent}.  Appendix~\ref{app:emd} describes the application to this problem.  To provide a concrete bound on the computational cost, we remark that, when $\mtx{A}_{\tau}$ has dimension $m \times s$, forming $\mtx{G} = \mtx{A}_{\tau}^\adj \mtx{A}_{\tau} - \Id$ costs at most $\bigO(s^2 m)$, and Alizadeh's interior-point method~\cite{Ali95:Interior-Point-Methods} requires $\widetilde{\bigO}(s^{3.5})$ time.

\begin{rem} \label{rem:inf1-norm}
For symmetric $\mtx{G}$, Theorem~\ref{thm:groth-fact} shows that the norm $\pnorm{\infty\to1}{\mtx{G}}$ is approximated within a factor $\cnst{K_G}$ by the least $\alpha$ for which \eqref{eqn:groth-opt} has a nonpositive value.  A natural reformulation of \eqref{eqn:groth-opt} can identify this value of $\alpha$ automatically (cf.~Section~\ref{sec:other-forms}).  For nonsymmetric $\mtx{G}$, similar optimization problems arise.  These ideas yield new approximation algorithms for the $(\infty, 1)$ norm.   
\end{rem}


\subsection{An algorithm for Bourgain--Tzafriri}

We are prepared to state our algorithm for producing the set $\tau$ described by the Bourgain--Tzafriri theorem.  The procedure appears as Algorithm~\ref{alg:bt} on page~\pageref{alg:bt}.  Note the striking similarity with Algorithm~\ref{alg:kt}.  The following result describes the performance of the algorithm.  We omit the proof, which parallels that of Theorem~\ref{thm:kt-alg}.

\begin{thm}
Suppose $\mtx{A}$ is an $m \times n$ standardized matrix.  With probability at least $3/4$, Algorithm~\ref{alg:bt} produces a set $\tau = \tau_{\star}$ of column indices for which
$$
\abs{\tau} \geq \cnst{c} \cdot \strank(\mtx{A})
\quad\text{and}\quad
\kappa(\mtx{A}_{\tau}) \leq \sqrt{3}.
$$
The computational cost is bounded by $\widetilde{\bigO}( \abs{\tau}^2 m + \abs{\tau}^{3.5})$.
\end{thm}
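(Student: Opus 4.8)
The plan is to transcribe the proof of Theorem~\ref{thm:kt-alg} almost verbatim, using the dictionary Theorem~\ref{thm:rdm-inf2}~$\rightarrow$~Theorem~\ref{thm:inf1-reduce}, Proposition~\ref{prop:pietsch-cols}~$\rightarrow$~Proposition~\ref{prop:groth-col}, the Pietsch program \eqref{eqn:pietsch-opt}~$\rightarrow$~the Grothendieck program \eqref{eqn:groth-opt}, and appending one closing step furnished by Proposition~\ref{prop:H-A}. Algorithm~\ref{alg:bt} is the evident analogue of Algorithm~\ref{alg:kt}: a candidate cardinality $s$ doubles through $4, 8, 16, \dots$, and for each $s$ one runs $\Theta(\log s)$ independent trials of a \emph{reduce} subroutine. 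A trial draws a uniformly random index set $\sigma$ of size $s$, forms the hollow Gram matrix $\mtx{H}_{\sigma\times\sigma} = \mtx{A}_{\sigma}^{\adj}\mtx{A}_{\sigma} - \Id$, solves \eqref{eqn:groth-opt} with $\mtx{G} = \mtx{H}_{\sigma\times\sigma}$ for a value $\alpha = \Theta(s)$ (taken a hair above $\cnst{K_G}\,s/8$, using the numerical upper bound on $\cnst{K_G}$) to a fixed constant accuracy $\eta$, converts the resulting feasible point into a Grothendieck factorization $\mtx{H}_{\sigma\times\sigma} = \mtx{DTD}$ as in the discussion after \eqref{eqn:groth-opt}, and extracts $\tau = \{\, j \in \sigma : d_{jj}^{2} \le 2/s \,\}$ exactly as in Proposition~\ref{prop:groth-col}. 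The objective value of \eqref{eqn:groth-opt} together with the diagonal of $\mtx{D}$ certify an explicit bound on $\norm{\mtx{H}_{\tau\times\tau}}$, so the subroutine recognizes when it has succeeded; the algorithm keeps the largest valid $\tau_{\star}$ it sees and halts once an entire inner loop fails.

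For correctness I would argue just as in the proof of Theorem~\ref{thm:kt-alg}. The doubling schedule places some $s$ into the window $[\tfrac12\cnst{c}\,\strank(\mtx{A}),\ \cnst{c}\,\strank(\mtx{A})]$, and for that $s$ Theorem~\ref{thm:inf1-reduce} makes a single trial produce $\sigma$ with $\pnorm{\infty\to1}{\mtx{H}_{\sigma\times\sigma}} \le s/8$ with probability at least $1/9$; hence a round of $\Theta(\log s)$ trials fails with probability at most $(8/9)^{\Theta(\log s)} = s^{-\Omega(1)}$. Summing over the doubling levels gives a convergent geometric series that is driven below $1/4$ by taking the constant in $\Theta(\log s)$ large enough, exactly as $\sum_{j\ge 2}(7/8)^{8j} < 0.2$ in Theorem~\ref{thm:kt-alg}. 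On the favorable event, Proposition~\ref{prop:groth-col} gives $\abs{\tau} \ge s/2 \ge \tfrac14\cnst{c}\,\strank(\mtx{A})$, and (up to the solver slack addressed below) $\norm{\mtx{H}_{\tau\times\tau}} \le (2\cnst{K_G}/s)\,\pnorm{\infty\to1}{\mtx{H}_{\sigma\times\sigma}} \le \cnst{K_G}/4 < 0.5$, whereupon Proposition~\ref{prop:H-A} delivers $\kappa(\mtx{A}_{\tau}) \le \sqrt{3}$. Renaming $\tfrac14\cnst{c}$ as $\cnst{c}$ yields the stated cardinality bound.

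For the running time I would repeat the accounting of Theorem~\ref{thm:kt-alg}: forming $\mtx{H}_{\sigma\times\sigma}$ costs $\bigO(s^{2}m)$, solving \eqref{eqn:groth-opt} to constant accuracy costs $\widetilde{\bigO}(s^{3.5})$ by Alizadeh's interior-point method~\cite{Ali95:Interior-Point-Methods}, and extracting $\tau$ is cheaper; multiplying by the $\Theta(\log s)$ trials per round and summing the geometrically growing per-round costs, the total is dominated, up to logarithmic factors, by the last round, where $s \le 2\abs{\tau_{\star}}$. This gives the advertised bound $\widetilde{\bigO}(\abs{\tau}^{2}m + \abs{\tau}^{3.5})$.

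The one place where this is not a mechanical copy of Section~\ref{sec:kt-alg} is the constant bookkeeping in the second paragraph: Bourgain--Tzafriri needs $\norm{\mtx{H}_{\tau\times\tau}}$ below the \emph{sharp} threshold $0.5$ (so that $\kappa \le \sqrt{3}$), not merely $\bigO(1)$ as in Kashin--Tzafriri, so I would have to check that the slack $0.5 - \cnst{K_G}/4 \approx 0.05$ survives both the unknown value of $\cnst{K_G}$ (absorbed by the bound $\cnst{K_G}(\Rspace{}) \le 1.783$) and the finite solver accuracy $\eta$ (absorbed by choosing $\eta$ a small enough absolute constant, which inflates the run time only by a constant factor). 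I expect that verification to be the only step requiring genuine care; everything else is a routine translation of the Kashin--Tzafriri argument.
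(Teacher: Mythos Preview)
Your proposal is correct and is precisely the approach the paper intends: the paper itself omits the proof entirely, saying only that it ``parallels that of Theorem~\ref{thm:kt-alg},'' and your dictionary (Theorem~\ref{thm:rdm-inf2}~$\to$~Theorem~\ref{thm:inf1-reduce}, Proposition~\ref{prop:pietsch-cols}~$\to$~Proposition~\ref{prop:groth-col}, \eqref{eqn:pietsch-opt}~$\to$~\eqref{eqn:groth-opt}, plus Proposition~\ref{prop:H-A}) is exactly the right translation. Your added remark about the sharp threshold $0.5$ versus $\bigO(1)$ is a genuine subtlety the paper glosses over, and your handling of it---using $\cnst{K_G}/4 < 0.5$ with the known bound $\cnst{K_G}(\Rspace{}) \le 1.783$ and absorbing the solver slack into a small absolute~$\eta$---is sound; note also that the paper's choice $\alpha = s/4$ in Algorithm~\ref{alg:bt} and its $8\log_2 s$ trials are consistent with your analysis, since $\sum_{j\ge 2}(8/9)^{8j} < 1/4$.
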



\section{Future Directions} \label{sec:discussion}

After the initial work~\cite{BT87:Invertibility-Large}, additional research has clarified the role of the stable rank.  We highlight a positive result of Vershynin~\cite[Cor.~7.1]{Ver01:Johns-Decompositions} and a negative result of Szarek~\cite[Thm.~1.2]{Sza90:Spaces-Large} which together imply that the stable rank describes \emph{precisely} how large a well-conditioned column submatrix can in general exist.  See~\cite[Sec.~5]{Ver01:Johns-Decompositions} for a more detailed discussion.

\begin{thm}[Vershynin 2001] \label{thm:versh}
Fix $\eps > 0$.  For each matrix $\mtx{A}$, there is a set $\tau$ of column indices for which
$$
\abs{\tau} \geq (1 - \eps) \cdot \strank(\mtx{A})
\quad\text{and}\quad
\kappa(\mtx{A}_{\tau}) \leq \cnst{C}(\eps).
$$
\end{thm}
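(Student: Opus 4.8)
The idea is to \emph{bootstrap} the Bourgain--Tzafriri theorem (Theorem~\ref{thm:bt}), which extracts a well-conditioned submatrix whose size is a \emph{fixed} fraction of $\strank(\mtx A)$, into one capturing a $(1-\eps)$ fraction, at the cost of a condition number $\cnst C(\eps)$ that degrades as $\eps\to 0$. First I would dispose of non-uniform column norms by a short preliminary argument: for a fixed $\eps$, only columns whose $\ell_2$ norms agree up to a factor $\cnst C(\eps)$ can possibly sit inside a well-conditioned submatrix of the demanded size, so after restricting to a dyadic band of columns of comparable norm and rescaling we may assume $\mtx A$ is standardized. The goal then becomes: produce $\tau$ with $\abs\tau\ge(1-\eps)\strank(\mtx A)$ such that the eigenvalues of $\mtx A_\tau^\adj\mtx A_\tau$ lie in a band of relative width $\cnst C(\eps)^2$.

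The engine is an iterated, ``residual'' form of restricted invertibility~\cite{BT87:Invertibility-Large}. I would maintain a current set $\tau$ with $\mtx A_\tau$ well conditioned, together with the orthogonal projector $\mtx P$ onto $\colspan(\mtx A_\tau)$. At each round, form the residuals $\vct v_j=(\Id-\mtx P)\vct a_j$ for $j\notin\tau$, set aside the indices with $\enorm{\vct v_j}^2$ below a threshold $1-\eta$ depending on $\eps$ (those columns are already nearly captured by $\mtx A_\tau$), renormalize the surviving residuals, and apply Theorem~\ref{thm:bt} to this new standardized matrix to obtain a fresh batch $\tau'$ whose residuals are well conditioned and --- by the threshold --- almost orthogonal to $\colspan(\mtx A_\tau)$. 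Adjoin $\tau'$ to $\tau$ and repeat. The combining step is routine: in the $2\times 2$ block form of $\mtx A_{\tau\cup\tau'}^\adj\mtx A_{\tau\cup\tau'}$ the two diagonal blocks are well conditioned (one by induction, one by the choice of $\tau'$) while the off-diagonal block has norm $\bigO(\sqrt\eta)$, so a Schur-complement estimate shows $\kappa(\mtx A_{\tau\cup\tau'})$ worsens by at most a fixed factor each round.

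The hard part --- and the place where Vershynin's argument does real work --- is to show the process reaches $\abs\tau\ge(1-\eps)\strank(\mtx A)$ within $\bigO(1)$ rounds (the implied constant depending on $\eps$), so that the per-round degradation compounds to a bounded $\cnst C(\eps)$. Equivalently: whenever $\abs\tau<(1-\eps)\strank(\mtx A)$, the post-discard, renormalized residual matrix must still have stable rank bounded below by a positive quantity comparable to $\strank(\mtx A)$, so that Theorem~\ref{thm:bt} can strip off a macroscopic batch. A crude threshold does not suffice here: the only obvious control on the discarded columns is $\sum_{j\notin\tau}\enorm{\mtx P\vct a_j}^2\le\trace(\mtx P\,\mtx A\mtx A^\adj\mtx P)\le\norm{\mtx A}^2\rank(\mtx P)$, which is vacuous once $\abs\tau\gtrsim\strank(\mtx A)/2$. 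Overcoming this needs a finer accounting --- tracking a potential such as the residual Frobenius mass $\fnormsq{(\Id-\mtx P)\mtx A}$, showing it drops in a controlled way and stays large relative to $\strank(\mtx A)$, and possibly handling a bounded number of ``heavy'' residual directions by hand before invoking Theorem~\ref{thm:bt} --- and it is exactly this accounting that forces $\cnst C(\eps)\to\infty$ as $\eps\to 0$. That the blow-up is intrinsic, not an artifact of the method, follows from Szarek's construction~\cite{Sza90:Spaces-Large}.
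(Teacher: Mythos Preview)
The paper does not actually prove Theorem~\ref{thm:versh}; it is quoted from Vershynin~\cite[Cor.~7.1]{Ver01:Johns-Decompositions}, and the only description given of the argument is a single sentence: ``Vershynin's proof constructs the set $\tau$ \dots\ with a complicated iteration that interleaves the Kashin--Tzafriri theorem and the Bourgain--Tzafriri theorem.''  So the benchmark for comparison is thin, but it does flag one ingredient your outline omits: Theorem~\ref{thm:kt}.  Your proposal iterates Theorem~\ref{thm:bt} alone on residuals, whereas Vershynin's scheme alternates it with a Kashin--Tzafriri step.

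That difference is not cosmetic; it lands exactly on the gap you yourself identify.  You correctly observe that the whole argument hinges on showing that, after each round, the renormalized residual matrix still has stable rank comparable to $\strank(\mtx A)$, and you concede that ``a crude threshold does not suffice'' and that one needs ``a finer accounting \dots\ possibly handling a bounded number of `heavy' residual directions by hand.''  The Kashin--Tzafriri theorem is precisely the tool that handles those heavy directions: applied to the (renormalized) residual matrix, it extracts a large column subset whose \emph{spectral norm} is $\bigO(1)$, which forces the stable rank of that subset back up to order $\strank(\mtx A)$ and gives Theorem~\ref{thm:bt} something substantial to bite on in the next round.  Without that norm-reduction step, your iteration has no mechanism to prevent the residual spectral norm from blowing up relative to the residual Frobenius mass, and the lower bound on the batch size you extract can collapse---which is why your accounting stalls at roughly $\strank(\mtx A)/2$.

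In short: your architecture (iterate restricted invertibility on residuals, combine via block estimates) is the right one and matches the spirit of the cited proof, but as written it is a sketch with an acknowledged hole.  The missing piece is the interleaved Kashin--Tzafriri step, which the paper explicitly names as part of Vershynin's iteration and which supplies exactly the spectral-norm control your ``finer accounting'' is groping for.
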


\begin{thm}[Szarek]
There is a sequence $\{\mtx{A}(n)\}$ of matrices of increasing dimension for which
$$
\abs{\tau} = \strank(\mtx{A})
\quad\Longrightarrow\quad
\kappa( \mtx{A}_{\tau} ) = \omega(1).
$$
\end{thm}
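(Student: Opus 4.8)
\textit{Proof proposal.} The plan is to dispense with any averaging or union-bound argument by exhibiting an explicit, maximally symmetric family of matrices in which \emph{every} column subset of the critical size $\strank(\mtx A)$ has exactly the same condition number, so that the theorem reduces to two tiny eigenvalue computations. The family I would use is the sequence of regular simplex frames: for each $n\ge 2$, let $\mtx A(n)$ be the $n\times(n+1)$ matrix whose columns $\vct v_1,\dots,\vct v_{n+1}$ are unit vectors in $\Rspace n$ with $\ip{\vct v_i}{\vct v_j}=-1/n$ for all $i\ne j$. Such a configuration exists --- project the standard basis of $\Rspace{n+1}$ onto the hyperplane orthogonal to the all-ones vector and rescale each image to unit length --- and, if one wants the stable rank to be far below the ambient dimension, one may adjoin extra zero rows.

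First I would read off the invariants of $\mtx A(n)$ from the spectrum of its $(n+1)\times(n+1)$ Gram matrix, which by construction is
$$
\mtx A(n)^\adj\mtx A(n) \;=\; \frac{n+1}{n}\,\Id \;-\; \frac1n\,\onemtx\onemtx^\adj .
$$
Its eigenvalues are $(n+1)/n$ with multiplicity $n$ and $0$ with multiplicity $1$, so $\fnormsq{\mtx A(n)}=\trace\bigl(\mtx A(n)^\adj\mtx A(n)\bigr)=n+1$, $\normsq{\mtx A(n)}=(n+1)/n$, and therefore $\strank(\mtx A(n))=n$. In particular the stable rank is an integer, so the hypothesis $\abs\tau=\strank(\mtx A(n))$ is not vacuous: it selects exactly the $n$-element subsets of columns.

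Next I would handle all those subsets simultaneously, using that the symmetric group $S_{n+1}$ acts on the columns of $\mtx A(n)$ by isometries of $\Rspace n$; hence it suffices to take $\tau=\{1,\dots,n\}$. The Gram matrix of $\mtx A(n)_\tau$ is the $n\times n$ principal submatrix $\tfrac{n+1}{n}\Id-\tfrac1n\onemtx\onemtx^\adj$, whose eigenvalues are $1/n$ (multiplicity $1$) and $(n+1)/n$ (multiplicity $n-1$). Thus $\smin(\mtx A(n)_\tau)^2=1/n$ and $\smax(\mtx A(n)_\tau)^2=(n+1)/n$, so
$$
\kappa\bigl(\mtx A(n)_\tau\bigr)\;=\;\sqrt{\frac{(n+1)/n}{1/n}}\;=\;\sqrt{n+1}
$$
for \emph{every} $\tau$ with $\abs\tau=n$. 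Since $\sqrt{n+1}\to\infty$, the sequence $\{\mtx A(n)\}$ witnesses the theorem; in fact the same bound $\kappa(\mtx A(n)_\tau)\ge\sqrt{n+1}$ holds for all $\abs\tau\ge\strank(\mtx A(n))$, as the only larger admissible subset is the full matrix, which has rank $n<n+1$.

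I do not anticipate a genuine technical obstacle; the one substantive idea is the choice of a configuration symmetric enough that the adversary's freedom in picking $\tau$ disappears. A naive probabilistic construction would be much harder to push through: although a typical $n$-column subset of a wide random standardized matrix is ill-conditioned, there can be $\binom{N}{n}$ subsets to control and the probability that a fixed one has $\kappa\le C$ is only exponentially (not superexponentially) small, so the union bound does not close; moreover, a generic matrix whose stable rank is close to its number of rows does contain near-orthonormal, hence perfectly conditioned, subsets of size $\strank$. Passing to the extremal simplex frame is precisely what removes this difficulty.
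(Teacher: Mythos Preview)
The paper does not supply its own proof of this statement; it merely quotes Szarek's theorem with a citation to \cite[Thm.~1.2]{Sza90:Spaces-Large}. So there is nothing in the paper to compare your argument against, and your task is really to give \emph{some} correct construction. On that score your proposal succeeds: the equiangular simplex frame $\mtx A(n)$ is standardized, its Gram matrix $\frac{n+1}{n}\Id-\frac1n\onemtx\onemtx^\adj$ has the spectrum you claim, so $\strank(\mtx A(n))=n$, and every $n$-column principal Gram submatrix has eigenvalues $1/n$ and $(n+1)/n$, giving $\kappa(\mtx A(n)_\tau)=\sqrt{n+1}$ for all $\abs\tau=n$. The symmetry argument is sound (the permutation action on the columns is realized by orthogonal transformations of $\Rspace n$), so you genuinely avoid any union bound.

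For context: Szarek's original construction in the cited paper is probabilistic and lives in a Banach-space-geometry setting (spaces with large distance to $\ell_\infty^n$), so your explicit, deterministic example is a genuinely different and more elementary route to the same conclusion. What Szarek's approach buys is greater generality---it addresses a broader class of questions about subspace sections---whereas your approach buys transparency and an exact value $\kappa=\sqrt{n+1}$ rather than an asymptotic lower bound. The simplex frame is in fact the textbook witness for the sharpness of Vershynin's $(1-\eps)\strank$ bound, so your choice is natural even if it is not Szarek's.
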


Vershynin's proof constructs the set $\tau$ in Theorem~\ref{thm:versh} with a complicated iteration that interleaves the Kashin--Tzafriri theorem and the Bourgain--Tzafriri theorem.  We believe that the argument can be simplified substantially and developed into a column selection algorithm.  This achievement might lead to a new method for performing rank-revealing factorizations, which could have a significant impact on the practice of numerical linear algebra.


%

\newpage

\begin{algorithm}[htb]
\caption{Constructive version of Kashin--Tzafriri Theorem} \label{alg:kt}
\centering \fbox{
\begin{minipage}{.9\textwidth} 
\vspace{4pt}
\algname{KT}{$\mtx{A}$}
\alginout{Standardized matrix $\mtx{A}$ with $n$ columns}
{A subset $\tau_{\star}$ of $\{1, 2, \dots, n\}$}
\algdescript{Produces $\tau_{\star}$ such that $\abs{\tau_{\star}} \geq \strank(\mtx{A})/2$ and $\norm{\vct{A}_\tau} \leq 15$ w.p.~4/5}
\vspace{6pt}

\begin{algtab}
$\tau_{\star} = \{1\}$ \\
\algfor{$s = 4, 8, 16, \dots, n$}
	\algfor{$k = 1, 2, 3, \dots, 8 \log_2 s$}
		$\tau = \textsc{Norm-Reduce}(\mtx{A}, s)$ \\
		\algifthen{$\norm{\mtx{A}_{\tau}} \leq 15$}
			{$\tau_{\star} = \tau$ and {\bf break}}
	\algend
	\algifthen{$\abs{\tau_{\star}} < s$}{{\bf exit}}
\algend
\end{algtab}

\vspace{10pt} \hrule \vspace{10pt}

\algname{Norm-Reduce}{$\mtx{A}$, $s$}
\alginout{Standardized matrix $\mtx{A}$ with $n$ columns, a parameter $s$}
{A subset $\tau$ of $\{1, 2, \dots, n\}$}
\vspace{6pt}

\begin{algtab}
Draw a uniformly random set $\sigma$ with cardinality $s$ from $\{1,2,\dots, n\}$ \\
Solve \eqref{eqn:pietsch-opt} with $\mtx{B} = \mtx{A}_{\sigma}$ and $\alpha = 8\cnst{K_P}\sqrt{s}$ to obtain a 
	factorization $\mtx{B} = \mtx{TD}$  \\
Return $\tau = \{ j \in \sigma : d_{jj}^2 \leq 2/s \}$
\end{algtab}
\end{minipage}}
\end{algorithm}

\begin{algorithm}[htb]
\caption{Constructive version of Bourgain--Tzafriri Theorem} \label{alg:bt}
\centering \fbox{
\begin{minipage}{.9\textwidth} 
\vspace{4pt}
\algname{BT}{$\mtx{A}$}
\alginout{Standardized matrix $\mtx{A}$ with $n$ columns}
{A subset $\tau_{\star}$ of $\{1, 2, \dots, n\}$}
\algdescript{Produces $\tau_{\star}$ such that $\abs{\tau_{\star}} \geq \strank(\mtx{A})/2$ and $\kappa(\vct{A}_\tau) \leq \sqrt{3}$ w.p.~3/4}
\vspace{6pt}

\begin{algtab}
$\tau_{\star} = \{1\}$ \\
\algfor{$s = 4, 8, 16, \dots, n$}
	\algfor{$k = 1, 2, 3, \dots, 8 \log_2 s$}
		$\tau = \textsc{Cond-Reduce}(\mtx{A}, s)$ \\
		\algifthen{$\kappa(\mtx{A}_{\tau}) \leq \sqrt{3}$}
			{$\tau_{\star} = \tau$ and {\bf break}}
	\algend
	\algifthen{$\abs{\tau_{\star}} < s$}{{\bf exit}}
\algend
\end{algtab}

\vspace{10pt} \hrule \vspace{10pt}

\algname{Cond-Reduce}{$\mtx{A}$, $s$}
\alginout{Standardized matrix $\mtx{A}$ with $n$ columns, a parameter $s$}
{A subset $\tau$ of $\{1, 2, \dots, n\}$}
\vspace{6pt}

\begin{algtab}
Draw a uniformly random set $\sigma$ with cardinality $s$ from $\{1,2,\dots, n\}$ \\
Solve \eqref{eqn:groth-opt} with $\mtx{G} = \mtx{A}_{\sigma}^\adj \mtx{A}_{\sigma} - \Id$ and $\alpha = s/4$ to obtain factorization $\mtx{G} = \mtx{DTD}$  \\
Return $\tau = \{ j \in \sigma : d_{jj}^2 \leq 2/s \}$
\end{algtab}
\end{minipage}}
\end{algorithm}



\clearpage

\newpage

\appendix

\section{Random Reduction of Norms}
	\label{app:rdm-norm}
	
How does the norm of a matrix change when we pass to a random submatrix?  This question has great importance in modern functional analysis, but it also has implications for the design of algorithms.  This appendix describes some general results on how random selection reduces the $(\infty, 2)$ norm and the $(\infty, 1)$ norm.  We also specialize these results to the structured matrices that appear in the proofs of Theorem~\ref{thm:kt} and Theorem~\ref{thm:bt}.


\subsection{Random Coordinate Models}

We begin with two standard models for selecting random submatrices, and we describe how these models are related for an important class of matrix norms.

A matrix norm is \term{monotonic} if the norm of a matrix exceeds the norm of every (rectangular) submatrix.  More precisely, the norm $\triplenorm{\cdot}$ is monotonic if
$$
\triplenorm{ \mtx{PAP}' } \leq \triplenorm{\mtx{A}}
$$
for each matrix $\mtx{A}$ and each pair $\mtx{P}, \mtx{P}'$ of diagonal (i.e., coordinate) projectors.  The basic example of a monotonic matrix norm is the natural norm on operators from $\ell_p$ to $\ell_q$ with $p, q$ in $[1,\infty]$, which is defined as
$$
\pnorm{p\to q}{ \mtx{A} } =
	\max\{ \pnorm{q}{\mtx{A}\vct{x}} : \pnorm{p}{\vct{x}} = 1 \}.
$$

Fix a number $\delta$ in $[0,1]$, and denote by $\mtx{P}_{\delta}$ a random $n \times n$ diagonal matrix where exactly $s = \lfloor \delta n \rfloor$ entries equal one and the rest equal zero.  This matrix can be viewed as a projector onto a random set of $s$ coordinates.  Therefore, we may treat $\mtx{A}\mtx{P}_{\delta}$ as a random $s$-column submatrix of $\mtx{A}$ by ignoring the zeroed columns.  Although this model is conceptually appealing, it can be difficult to analyze because of the dependencies among coordinates.

Let us introduce a simpler model for selecting random coordinates.  We denote by $\mtx{R}_{\delta}$ a random $n \times n$ diagonal matrix whose entries are independent 0--1 random variables with common mean $\delta$.  This matrix is a projector onto a random set of coordinates with \emph{average} cardinality $\delta n$.

There is a basic result connecting these two models.  The statement here follows directly from the argument in \cite[Lem.~14]{Tro08:Linear-Independence}.

\begin{prop}[Poissonization] \label{prop:poisson}
Let $\triplenorm{\cdot}$ be a monotonic matrix norm.  For each matrix $\mtx{A}$ with $n$ columns, it holds that
$$
\Expect \triplenorm{ \mtx{A} \mtx{P}_{\delta} }
	\leq 2 \Expect \triplenorm{ \mtx{A} \mtx{R}_{\delta} }.
$$
For each $n \times n$ matrix $\mtx{H}$, it holds that
$$
\Expect \triplenorm{ \mtx{P}_{\delta} \mtx{H} \mtx{P}_{\delta} }
	\leq 2 \Expect \triplenorm{ \mtx{R}_{\delta} \mtx{H} \mtx{R}_{\delta} }.
$$
\end{prop}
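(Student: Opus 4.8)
The plan is to compare the two sampling models by conditioning on the cardinality of the random coordinate set and then exploiting monotonicity. Write $R$ for the random set of coordinates selected by $\mtx{R}_{\delta}$, so that the events $\{j \in R\}$ are independent with probability $\delta$ and $\abs{R}$ has the $\mathrm{Binomial}(n,\delta)$ distribution; let $\mtx{P}_R$ denote the corresponding coordinate projector. The combinatorial heart of the matter is this: if we condition on a realization of $R$ with $\abs{R} \geq s$ and then draw a uniformly random $s$-element subset $S$ of $R$, a one-line count with binomial coefficients shows that $S$ is uniformly distributed over all $s$-element subsets of $\{1,2,\dots,n\}$. Hence the projector $\mtx{P}_S$ onto $S$ has exactly the law of $\mtx{P}_{\delta}$, regardless of the value of $\abs{R}$.

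First I would handle the one-sided statement. Fix a realization of $R$ with $\abs{R} \geq s$ and let $S \subseteq R$ be the uniform $s$-subset just described. Because $S \subseteq R$, we have $\mtx{A}\mtx{P}_S = (\mtx{A}\mtx{P}_R)\mtx{P}_S$, so monotonicity of $\triplenorm{\cdot}$ (applied with left projector $\Id$ and right projector $\mtx{P}_S$) gives $\triplenorm{\mtx{A}\mtx{P}_S} \leq \triplenorm{\mtx{A}\mtx{P}_R}$ for every realization of $S$. Averaging over $S$ with $R$ held fixed, and using that $\mtx{P}_S$ has the law of $\mtx{P}_{\delta}$, yields $\Expect\triplenorm{\mtx{A}\mtx{P}_{\delta}} \leq \triplenorm{\mtx{A}\mtx{P}_R}$ whenever $\abs{R} \geq s$. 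Taking the conditional expectation over $R$ given $\{\abs{R} \geq s\}$ and then paying a factor $1/\Prob{\abs{R} \geq s}$ to discard the conditioning, I obtain
$$
\Expect\triplenorm{\mtx{A}\mtx{P}_{\delta}}
	\leq \frac{1}{\Prob{\abs{R} \geq s}} \, \Expect\triplenorm{\mtx{A}\mtx{R}_{\delta}}.
$$
The two-sided statement is proved by the same argument with $\mtx{H}$ in place of $\mtx{A}$: under the identical coupling, $\mtx{P}_S\mtx{H}\mtx{P}_S = \mtx{P}_S(\mtx{P}_R\mtx{H}\mtx{P}_R)\mtx{P}_S$, so monotonicity gives $\triplenorm{\mtx{P}_S\mtx{H}\mtx{P}_S} \leq \triplenorm{\mtx{P}_R\mtx{H}\mtx{P}_R}$, and the same averaging produces the corresponding bound.

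It then remains only to verify that $\Prob{\abs{R} \geq s} \geq 1/2$ when $s = \lfloor \delta n \rfloor$, i.e., that the median of a $\mathrm{Binomial}(n,\delta)$ random variable is at least $\lfloor \delta n \rfloor$. This is a classical fact about binomial medians; alternatively it is exactly what the argument in \cite[Lem.~14]{Tro08:Linear-Independence} supplies. I expect this binomial-median estimate to be the only delicate ingredient — the combinatorial coupling and the monotonicity manipulations are routine bookkeeping — so in the write-up I would either cite it or insert the short self-contained proof comparing the binomial probability mass function on the two sides of its mode.
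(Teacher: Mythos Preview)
Your approach is the standard Poissonization coupling and is exactly what the paper invokes by citing \cite[Lem.~14]{Tro08:Linear-Independence}; the ingredients (subsampling coupling, monotonicity, binomial median bound) are all correct.

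One step in the write-up is misstated, however. With $R$ held \emph{fixed}, a uniform $s$-subset $S$ of $R$ does \emph{not} have the law of $\mtx{P}_{\delta}$, so the pointwise inequality $\Expect\triplenorm{\mtx{A}\mtx{P}_{\delta}} \leq \triplenorm{\mtx{A}\mtx{P}_R}$ can fail for particular realizations of $R$ (take a realization that misses the large columns of $\mtx{A}$). The correct version conditions on $\abs{R}=k$ for each $k \geq s$ and averages over \emph{both} $R$ and $S$: given $\abs{R}=k$, the set $R$ is uniform over $k$-subsets, so the nested uniform $s$-subset $S$ is uniform over all $s$-subsets, and monotonicity then yields
\[
\Expect\triplenorm{\mtx{A}\mtx{P}_{\delta}}
= \Expect\!\left[\,\triplenorm{\mtx{A}\mtx{P}_S} \,\middle|\, \abs{R}=k\right]
\leq \Expect\!\left[\,\triplenorm{\mtx{A}\mtx{P}_R} \,\middle|\, \abs{R}=k\right].
\]
Averaging over $k \geq s$ and applying $\Prob{\abs{R} \geq s} \geq 1/2$ finishes as you indicate. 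This is only a reordering of the two expectations in your argument, not a new idea.
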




\subsection{Reduction of the $(\infty,2)$ norm}
	\label{app:inf2}

We begin with a general result on the $(\infty, 2)$ norm of a uniformly random set of columns drawn from a fixed matrix.  The basic argument appears already in the work of Bourgain and Tzafriri~\cite[Thm.~1.1]{BT91:Problem-Kadison-Singer}, but modern proofs are a little simpler.  (See~\cite[Lem.~2.3]{Ver05:Random-Sets}, for example.)  The version here offers especially good constants.


\begin{thm} \label{thm:inf2-reduce}
Fix $\delta \in [0, 1]$, and suppose $\mtx{A}$ is a matrix with $n$ columns. Then
$$
\Expect \pnorm{\infty\to2}{\mtx{A}\mtx{R}_{\delta}}
	\leq \sqrt{2\delta(1-\delta)} \fnorm{\mtx{A}}
		+ \delta \pnorm{\infty\to2}{\mtx{A}}.
$$
\end{thm}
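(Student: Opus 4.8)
The plan is to pass to the dual description of the $(\infty\to2)$ norm, so that the Bernoulli weights of $\mtx{R}_{\delta}$ enter \emph{without} absolute values; this is exactly what lets the fluctuations of $\mtx{R}_{\delta}$ about its mean cancel. Note that a direct triangle split $\mtx{A}\mtx{R}_{\delta} = \delta\mtx{A} + \mtx{A}(\mtx{R}_{\delta} - \delta\Id)$ followed by $\pnorm{\infty\to2}{\mtx{A}(\mtx{R}_{\delta}-\delta\Id)} = \sup_{\vct y}\sum_j\abs{\eta_j-\delta}\absip{\vct a_j}{\vct y}$ is far too lossy: for $\mtx{A} = \begin{bmatrix}1&\cdots&1\end{bmatrix}$ this term has expectation of order $\delta(1-\delta)n$, which swamps $\sqrt{2\delta(1-\delta)n}$.

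The first move is to dualize. Writing the columns of $\mtx{A}\mtx{R}_{\delta}$ as $\eta_j\vct{a}_j$ with $\eta_j \geq 0$, the inner vector $\vct x$ in the $\ell_\infty$ ball can be optimized coordinatewise once $\vct y$ is fixed, giving
$$
\pnorm{\infty\to2}{\mtx{A}\mtx{R}_{\delta}}
= \sup_{\enorm{\vct y}\le1}\ \sum\nolimits_j \eta_j\,\absip{\vct a_j}{\vct y},
\qquad\text{and in particular}\qquad
\sup_{\enorm{\vct y}\le1}\ \sum\nolimits_j \absip{\vct a_j}{\vct y}
= \pnorm{2\to1}{\mtx{A}^{\adj}} = \pnorm{\infty\to2}{\mtx{A}}.
$$
The second move is to center: set $\xi_j = \eta_j - \delta$, so $\Expect\xi_j = 0$ and $\Expect\xi_j^2 = \delta(1-\delta)$. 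Since each map $\vct y\mapsto\absip{\vct a_j}{\vct y}$ is \emph{nonnegative}, the supremum above splits as
$$
\pnorm{\infty\to2}{\mtx{A}\mtx{R}_{\delta}}
\ \le\ \delta\,\pnorm{\infty\to2}{\mtx{A}}
\ +\ \sup_{\enorm{\vct y}\le1}\ \sum\nolimits_j \xi_j\,\absip{\vct a_j}{\vct y},
$$
and the whole problem reduces to showing $\Expect \sup_{\enorm{\vct y}\le1}\sum_j \xi_j\absip{\vct a_j}{\vct y} \le \sqrt{2\delta(1-\delta)}\,\fnorm{\mtx{A}}$.

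To bound this last supremum I would argue as follows. The function $\vct\xi\mapsto\sup_{\vct y}\sum_j\xi_j\absip{\vct a_j}{\vct y}$ is a supremum of linear functionals, hence convex, so a standard symmetrization against an independent copy (replacing $\xi_j$ by $\eta_j - \eta'_j$ with $\vct\eta'$ an independent copy of $\vct\eta$) and then inserting independent Rademacher signs $\eps_j$ — legitimate because $\eta_j - \eta'_j$ is symmetric — yields
$$
\Expect\sup_{\vct y}\sum\nolimits_j\xi_j\absip{\vct a_j}{\vct y}
\ \le\ \Expect\sup_{\vct y}\sum\nolimits_j\eps_j\,\abs{\eta_j-\eta'_j}\,\absip{\vct a_j}{\vct y}.
$$
Conditioning on $\vct\eta,\vct\eta'$ and invoking the contraction principle for Rademacher averages (the map $t\mapsto\abs{t}$ is $1$-Lipschitz and vanishes at $0$, and $\abs{\eta_j-\eta'_j}\ge0$) strips the inner absolute values; optimizing the resulting linear functional over the unit ball turns it into a norm, leaving $\Expect_{\vct\eps}\enorm{\sum_j\eps_j\abs{\eta_j-\eta'_j}\vct a_j}$. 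Jensen's inequality together with orthogonality of the $\eps_j$ bounds this by $\bigl(\sum_j\abs{\eta_j-\eta'_j}^2\enormsq{\vct a_j}\bigr)^{1/2}$, and a final Jensen over $\vct\eta,\vct\eta'$ with the identity $\Expect\abs{\eta_j-\eta'_j}^2 = 2\delta(1-\delta)$ produces exactly $\sqrt{2\delta(1-\delta)}\,\fnorm{\mtx{A}}$. Combined with the split above, this is the claimed bound.

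The main obstacle is the last step: controlling the supremum of the centered random process over the $\ell_2$ ball, and in particular seeing that the correct scale is the \emph{Frobenius} norm (not the spectral norm) with the sharp constant $\sqrt{2}$. Both features emerge from the symmetrization-plus-contraction route — symmetrization turns the asymmetric Bernoulli centering into a Rademacher average, and contraction followed by a second-moment computation is what brings in the column norms $\enorm{\vct a_j}$, whose squares sum to $\fnormsq{\mtx{A}}$. In the complex scalar case the only change is that $\abs{\cdot}$ must be handled as a Lipschitz map on $\Cspace{}\cong\Rspace{2}$, e.g.\ via a vector-valued contraction inequality or by splitting the inner products into real and imaginary parts.
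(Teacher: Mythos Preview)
Your proposal is correct and follows essentially the same route as the paper: dualize to the $\pnorm{2\to1}{\cdot}$ form, split off the mean $\delta\pnorm{\infty\to2}{\mtx{A}}$, symmetrize the centered term against an independent copy, insert Rademacher signs, apply the Ledoux--Talagrand comparison/contraction principle to strip the absolute values from the inner products, and finish with Jensen and the second-moment identity $\Expect(\eta_j-\eta_j')^2=2\delta(1-\delta)$. The only cosmetic difference is that you write the symmetrized coefficients as $\eps_j\abs{\eta_j-\eta_j'}$ while the paper keeps $\eps_j(\delta_j-\delta_j')$; these have the same distribution, so the computations coincide.
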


We postpone the argument to the next section so we may note a corollary that appears as a key step in the proof of the Kashin--Tzafriri theorem.

\begin{cor} \label{cor:inf2-kt}
Suppose $\mtx{A}$ is a standardized matrix with $n$ columns.  Choose $s \leq \lceil 2 \strank(\mtx{A}) \rceil$, and write $\delta = s/n$.  Then
$$
\Expect \pnorm{\infty\to2}{\mtx{A}\mtx{P}_{\delta}}
	\leq 7 \sqrt{s}.
$$
\end{cor}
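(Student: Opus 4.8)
The plan is to chain together the two general estimates already in hand. Since the $(\infty,2)$ operator norm is monotonic, Proposition~\ref{prop:poisson} transfers the question from the fixed-cardinality model to the independent-coordinate model:
\[
\Expect \pnorm{\infty\to2}{\mtx{A}\mtx{P}_{\delta}} \leq 2 \, \Expect \pnorm{\infty\to2}{\mtx{A}\mtx{R}_{\delta}}.
\]
Then I would feed the right-hand side into Theorem~\ref{thm:inf2-reduce} to obtain the bound $2\sqrt{2\delta(1-\delta)}\,\fnorm{\mtx{A}} + 2\delta\,\pnorm{\infty\to2}{\mtx{A}}$, which reduces the corollary to estimating the Frobenius norm and the $(\infty,2)$ norm of $\mtx{A}$ in terms of the stable rank.

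Next I would dispatch the two terms using the standing hypotheses. Because $\mtx{A}$ is standardized, its $n$ columns are unit vectors, so $\fnormsq{\mtx{A}} = n$; together with $\delta = s/n$ and the trivial bound $1 - \delta \leq 1$, this gives $\sqrt{2\delta(1-\delta)}\,\fnorm{\mtx{A}} \leq \sqrt{2s}$. For the other term, I would use $\pnorm{\infty\to2}{\mtx{A}} \leq \sqrt{n}\,\norm{\mtx{A}}$ (valid since $\mtx{A}$ has $n$ columns) and rewrite $\normsq{\mtx{A}} = \fnormsq{\mtx{A}}/\strank(\mtx{A}) = n/\strank(\mtx{A})$ to conclude $\delta\,\pnorm{\infty\to2}{\mtx{A}} \leq s/\sqrt{\strank(\mtx{A})}$.

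Finally, I would invoke the hypothesis $s \leq \lceil 2\strank(\mtx{A})\rceil$. Since the stable rank of a nonzero matrix is at least one, we have $\lceil 2\strank(\mtx{A})\rceil \leq 2\strank(\mtx{A}) + 1 \leq 3\strank(\mtx{A})$, whence $s/\sqrt{\strank(\mtx{A})} = \sqrt{s}\cdot\sqrt{s/\strank(\mtx{A})} \leq \sqrt{3s}$. Assembling the estimates yields
\[
\Expect \pnorm{\infty\to2}{\mtx{A}\mtx{P}_{\delta}} \leq 2\bigl(\sqrt{2} + \sqrt{3}\bigr)\sqrt{s} < 7\sqrt{s},
\]
as claimed. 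I do not anticipate any genuine difficulty: all the analytic content is buried in Theorem~\ref{thm:inf2-reduce} and Proposition~\ref{prop:poisson}, and the only thing demanding care is bookkeeping the universal constants, which is why one keeps the lossy bounds $1-\delta \leq 1$ and $\lceil 2t\rceil \leq 3t$ for $t \geq 1$ rather than anything sharper.
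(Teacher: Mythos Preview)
Your proposal is correct and follows essentially the same route as the paper: Poissonization via Proposition~\ref{prop:poisson}, then Theorem~\ref{thm:inf2-reduce}, then the identities $\fnorm{\mtx{A}}=\sqrt{n}$ and $\pnorm{\infty\to2}{\mtx{A}}\leq\sqrt{n}\,\norm{\mtx{A}}$ together with $\lceil 2\strank(\mtx{A})\rceil\leq 3\strank(\mtx{A})$, arriving at the identical constant $2(\sqrt{2}+\sqrt{3})<7$. The only cosmetic difference is that the paper records the intermediate bound as $\delta\leq 3/\normsq{\mtx{A}}$ rather than writing $s/\sqrt{\strank(\mtx{A})}$, but the arithmetic is the same.
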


\begin{proof}
Owing to the standardization, $1 \leq \strank(\mtx{A}) = n /\normsq{\mtx{A}}$.  It follows that
$$
\delta \leq \frac{2 \strank(\mtx{A}) + 1}{n}
	\leq \frac{3 \strank(\mtx{A})}{n}
	= \frac{3}{\normsq{\mtx{A}}}.
$$
Apply the Poissonization result, Proposition~\ref{prop:poisson}, to see that
$$
\Expect \pnorm{\infty\to2}{\mtx{A}\mtx{P}_{\delta}}
	\leq 2 \Expect \pnorm{\infty\to2}{\mtx{A}\mtx{R}_{\delta}}.
$$
Theorem~\ref{thm:inf2-reduce} yields
$$
\Expect \pnorm{\infty\to2}{\mtx{A}\mtx{P}_{\delta}}
	\leq 2\sqrt{2\delta} \fnorm{\mtx{A}}
		+ 2\delta \pnorm{\infty\to2}{\mtx{A}}. 
$$
Since $\mtx{A}$ has $n$ unit-norm columns, it holds that $\fnorm{\mtx{A}} = \sqrt{n}$.  We also have the general bound $\pnorm{\infty\to2}{ \mtx{A} } \leq \sqrt{n} \norm{\mtx{A}}$.  Therefore,
$$
\Expect \pnorm{\infty\to2}{\mtx{A}\mtx{P}_{\delta}}
	\leq 2\sqrt{2\delta n} + 2 \delta \sqrt{n} \norm{\mtx{A}}
	= 2 \sqrt{s} \left[ \sqrt{2} + \sqrt{\delta} \norm{\mtx{A}} \right].
$$
Introduce the bound on $\delta$ and make a numerical estimate to complete the proof. 
\end{proof}

\subsection{Proof of Theorem~\ref{thm:inf2-reduce}}

We must bound the quantity
$$
E = \Expect\pnorm{\infty\to2}{\mtx{A}\mtx{R}_{\delta}}.
$$
It turns out that it is easier to work with the $(2,1)$ norm, which is dual to the $(\infty,2)$ norm, because there are some special methods that apply.  Rewrite the expression as
$$
E = \Expect \pnorm{2\to 1}{\mtx{R}_{\delta} \mtx{A}^\adj}
	= \Expect \max_{\enorm{\vct{x}} = 1} \sum\nolimits_{j=1}^n
	\delta_j \absip{ \vct{a}_j }{ \vct{x} }
$$
where $\{\delta_j\}$ is a sequence of independent 0--1 random variables with common mean $\delta$.
In the sequel, we simplify notation by omitting the restriction on the vector $\vct{x}$ and the limits from the sum.

The next step is to center and symmetrize the selectors.  First, add and subtract the mean of each term from the sum and use the subadditivity of the maximum to obtain
\begin{align*}
E &\leq \Expect \max_{\vct{x}}
	\sum\nolimits_j (\delta_j - \delta) \absip{\vct{a}_j}{\vct{x}}
	+ \max_{\vct{x}} \sum\nolimits_j \delta \absip{\vct{a}_j}{\vct{x}} \\
	&= \Expect \max_{\vct{x}}
	\sum\nolimits_j (\delta_j - \delta) \absip{\vct{a}_j}{\vct{x}}
		+ \delta \pnorm{2 \to 1}{ \mtx{A}^\adj } \\
	&= \Expect \max_{\vct{x}}
	\sum\nolimits_j (\delta_j - \delta) \absip{\vct{a}_j}{\vct{x}}
		+ \delta \pnorm{ \infty \to 2}{ \mtx{A} }.
\end{align*}
We focus on the first term, which we abbreviate by the letter $F$.  Let $\{\delta_j'\}$ be an independent copy of the sequence $\{\delta_j\}$.  Jensen's inequality allows that
\begin{align*}
F &= \Expect \max_{\vct{x}} \sum\nolimits_j
	(\delta_j - \Expect \delta_j') \absip{\vct{a}_j}{\vct{x}} \\
	&\leq \Expect \max_{\vct{x}} \sum\nolimits_j
	(\delta_j - \delta_j') \absip{\vct{a}_j}{\vct{x}}.
\end{align*}
Observe that $\{ \delta_j - \delta_j' \}$ is a sequence of independent, symmetric random variables.  Thus, we may multiply each one by a random sign without changing the expectation~\cite[Lem.~6.3]{LT91:Probability-Banach}.  That is,
$$
F \leq \Expect \max_{\vct{x}} \sum\nolimits_j
	\eps_j (\delta_j - \delta_j') \absip{\vct{a}_j}{\vct{x}}
$$
where $\{\eps_j\}$ is a sequence of independent Rademacher (i.e., uniform $\pm 1$) random variables.  


Now, we invoke a specific type of Rademacher comparison~\cite[Thm.~4.12 et seq.]{LT91:Probability-Banach} to remove the absolute values from the inner product:
$$
F \leq \Expect \max_{\vct{x}} \sum\nolimits_j
	\eps_j (\delta_j - \delta_j') \ip{\vct{a}_j}{\vct{x}}
	= \Expect \max_{\vct{x}} \ip{
		\sum\nolimits_j \eps_j (\delta_j - \delta_j') \vct{a}_j }{ \vct{x} }.
$$
Since $\vct{x}$ ranges over the $\ell_2$ unit sphere, we reach
$$
F \leq \Expect \enorm{ \sum\nolimits_j \eps_j (\delta_j - \delta_j') \vct{a}_j }.
$$

The remaining expectations are elementary.  First, apply H{\"o}lder's inequality to obtain
$$
F \leq \left( \Expect \enormsq{ \sum\nolimits_j
	\eps_j (\delta_j - \delta_j') \vct{a}_j } \right)^{1/2}.
$$
Compute the expectation with respect to $\{\eps_j\}$ and then with respect to $\{\delta_j \}$ and $\{\delta_j'\}$.
$$
F \leq \left( \Expect \sum\nolimits_j
	(\delta_j - \delta_j')^2 \enormsq{\vct{a}_j} \right)^{1/2}
	= \left( \sum\nolimits_j 2\delta(1 - \delta) \enormsq{\vct{a}_j} \right)^{1/2}
	= \sqrt{2 \delta(1-\delta)} \fnorm{\mtx{A}}.
$$
Introduce this bound on $F$ into the bound on $E$ to conclude that
$$
\Expect \pnorm{\infty\to2}{\mtx{A}\mtx{P}_{\delta}}
	\leq \sqrt{2\delta(1-\delta)} \fnorm{\mtx{A}} + \delta \pnorm{\infty\to2}{\mtx{A}}.
$$
This is the advertised estimate.

\subsection{Reduction of the $(\infty, 1)$ norm}	\label{app:inf1}

The impact of random selection on the $(\infty,1)$ norm has already received some attention in the theoretical computer science literature because of a connection with graph cuts.  The following result of Rudelson and Vershynin contains detailed information on the $(\infty, 1)$ norm of a random principal submatrix.  The statement involves an auxiliary norm
$$
\pnorm{\rm col}{\mtx{H}} = \sum\nolimits_j \enorm{\mtx{H}\onevct_j},
$$
where $\{\onevct_j\}$ is the set of standard basis vectors.  In words, we sum the Euclidean norms of the columns of the matrix.

\begin{thm}[Rudelson--Vershynin] \label{thm:rv-inf1}
Fix $\delta \in [0,1]$, and suppose $\mtx{H}$ is an $n \times n$ matrix.  Then
$$
\Expect \pnorm{\infty\to1}{\mtx{R}_\delta \mtx{H} \mtx{R}_{\delta}}
	\leq \cnst{C} \left[ \delta^2 \pnorm{\infty\to1}{\mtx{H} - \diag(\mtx{H})}
		+ \delta^{3/2} \left(\pnorm{\rm col}{\mtx{H}}
	+ \pnorm{\rm col}{\mtx{H}^\adj} \right)
		+ \delta \pnorm{\infty\to1}{ \diag(\mtx{H}) } \right].
$$
\end{thm}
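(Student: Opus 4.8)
This is the theorem of Rudelson and Vershynin~\cite{RV07:Sampling-Large}; the self-contained part is a bookkeeping reduction to its hollow core, after which one invokes their chaining estimate for the cut norm of a random submatrix. First I would peel off the diagonal: write $\mtx{H} = (\mtx{H} - \diag(\mtx{H})) + \diag(\mtx{H})$ and use subadditivity of $\pnorm{\infty\to1}{\cdot}$. Since $\mtx{R}_\delta$ is a diagonal $0$--$1$ matrix, $\mtx{R}_\delta\diag(\mtx{H})\mtx{R}_\delta = \mtx{R}_\delta\diag(\mtx{H})$, whose $(\infty\to1)$ norm is $\sum_{j}(\mtx{R}_\delta)_{jj}\abs{h_{jj}}$, with expectation exactly $\delta\pnorm{\infty\to1}{\diag(\mtx{H})}$; this produces the last term on the right. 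So it remains to prove, for a hollow $\mtx{H}$,
$$
\Expect\pnorm{\infty\to1}{\mtx{R}_\delta\mtx{H}\mtx{R}_\delta}
\leq \cnst{C}\left[\delta^2\pnorm{\infty\to1}{\mtx{H}}
+ \delta^{3/2}\bigl(\pnorm{\rm col}{\mtx{H}}+\pnorm{\rm col}{\mtx{H}^\adj}\bigr)\right].
$$

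\textbf{The hollow core.} Writing $\mtx{R}_\delta = \mtx{S} + \delta\Id$ with $\mtx{S}$ diagonal, mean-zero, bounded by $1$ in modulus and with $\Expect S_{jj}^2 \leq \delta$, expand
$$
\mtx{R}_\delta\mtx{H}\mtx{R}_\delta
= \delta^2\mtx{H} + \delta(\mtx{S}\mtx{H} + \mtx{H}\mtx{S}) + \mtx{S}\mtx{H}\mtx{S}.
$$
The leading term contributes exactly $\delta^2\pnorm{\infty\to1}{\mtx{H}}$, so it remains to control the fluctuation $\mtx{R}_\delta\mtx{H}\mtx{R}_\delta - \delta^2\mtx{H}$, and here I would follow Rudelson and Vershynin. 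Since $\pnorm{\infty\to1}{\cdot}$ is a maximum of linear functionals over sign vectors and $\mtx{H}$ is hollow (so the quadratic-in-$\mtx{S}$ piece is an off-diagonal chaos), I would decouple the two occurrences of $\mtx{S}$, symmetrize the selectors, and thereby reduce the expected fluctuation to the supremum, over pairs of sign vectors $(\vct{x},\vct{y})$, of a decoupled bilinear Rademacher chaos whose entries are weighted by the selector magnitudes. I would then bound that supremum by Dudley's inequality (or a majorizing-measure argument), with the metric on the index set controlled by the Euclidean row and column norms of $\mtx{H}$; this yields the remaining $\delta^{3/2}\bigl(\pnorm{\rm col}{\mtx{H}}+\pnorm{\rm col}{\mtx{H}^\adj}\bigr)$, where the single factor $\delta$ comes from one selector's mean magnitude and the $\delta^{1/2}$ from the square root of the per-coordinate variance that governs the chaining. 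I would import the resulting estimate from~\cite[proof of Thm.~1.5]{RV07:Sampling-Large} rather than redo it.

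\textbf{Main obstacle.} The hard part will be exactly this last chaining step: pointwise, the fluctuation concentrates at scale $\delta$ (since $\Expect\abs{S_{jj}}\leq 2\delta$ and $\Expect S_{jj}^2\leq\delta$), but the supremum ranges over exponentially many sign vectors (or, over $\mathbb{C}$, a continuum of extreme points), and controlling its expectation with the correct $\sqrt{\delta}$ dependence against the functionals $\pnorm{\rm col}{\mtx{H}}$ and $\pnorm{\rm col}{\mtx{H}^\adj}$ is the heart of Rudelson and Vershynin's argument (a non-commutative Khintchine / selection-lemma estimate feeding a chaining bound). Everything else -- the diagonal split, the centering, the decoupling -- is routine.
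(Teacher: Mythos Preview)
Your plan is sound and matches the paper's treatment. Note, however, that the paper does not actually prove Theorem~\ref{thm:rv-inf1}: it states the result and remarks only that it ``is established with the same methods as Theorem~\ref{thm:inf2-reduce}, along with an additional decoupling argument~\cite[Prop.~1.9]{BT91:Problem-Kadison-Singer}.'' So there is no detailed proof in the paper to compare against; your outline (diagonal split, centering, decoupling, symmetrization, then the Rudelson--Vershynin estimate) is exactly the kind of argument the paper is pointing to, and your bookkeeping for the diagonal term and the $\delta^2$ term is correct.

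One small difference in emphasis: the paper's pointer suggests that the elementary toolkit from the proof of Theorem~\ref{thm:inf2-reduce} (centering, symmetrization, Rademacher comparison~\cite[Thm.~4.12]{LT91:Probability-Banach}, then a direct second-moment computation) already does the job once decoupling is in hand, whereas you invoke Dudley chaining and majorizing measures for the fluctuation term. In fact the more elementary route also works here---after decoupling and symmetrizing one side, the inner maximum over sign vectors collapses to an $\ell_1$ sum, and a second pass yields the column-norm terms with the correct $\delta^{3/2}$---so you may be able to avoid the heavier chaining machinery. But either route is correct, and both end at the same place.
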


Theorem~\ref{thm:rv-inf1} is established with the same methods as Theorem~\ref{thm:inf2-reduce}, along with an additional decoupling argument~\cite[Prop.~1.9]{BT91:Problem-Kadison-Singer}.  We rely on the following corollary in our proof of the Bourgain--Tzafriri theorem.

\begin{cor}
Suppose $\mtx{A}$ is an $n$-column standardized matrix with hollow Gram matrix $\mtx{H} = \mtx{A}^\adj \mtx{A} - \Id$.  Choose $s \leq \lceil \cnst{c} \cdot \strank(\mtx{A}) \rceil$, and write $\delta = s/n$.  Then
$$
\Expect \pnorm{\infty\to1}{\mtx{P}_{\delta} \mtx{H} \mtx{P}_{\delta}}
	\leq \frac{s}{9}.
$$
\end{cor}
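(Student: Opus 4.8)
The plan is to pass from the fixed-cardinality model to the independent-selector model, apply the Rudelson--Vershynin bound, and then estimate the two norms of $\mtx{H}$ that appear on the right-hand side in terms of $\norm{\mtx{A}}$, hence in terms of the stable rank. First I would invoke the Poissonization principle (Proposition~\ref{prop:poisson}), which applies because $\pnorm{\infty\to1}{\cdot}$ is a monotonic matrix norm, to get $\Expect \pnorm{\infty\to1}{\mtx{P}_{\delta}\mtx{H}\mtx{P}_{\delta}} \leq 2 \Expect \pnorm{\infty\to1}{\mtx{R}_{\delta}\mtx{H}\mtx{R}_{\delta}}$. Now $\mtx{H}$ is Hermitian and, because $\mtx{A}$ is standardized, every diagonal entry of $\mtx{A}^{\adj}\mtx{A}$ equals $1$, so $\diag(\mtx{H}) = \mtx{0}$; feeding this into Theorem~\ref{thm:rv-inf1} kills the diagonal term, collapses $\mtx{H} - \diag(\mtx{H})$ to $\mtx{H}$, and (using $\mtx{H}^{\adj} = \mtx{H}$) leaves
$$
\Expect \pnorm{\infty\to1}{\mtx{R}_{\delta}\mtx{H}\mtx{R}_{\delta}} \leq \cnst{C}\left[ \delta^{2}\, \pnorm{\infty\to1}{\mtx{H}} + 2\,\delta^{3/2}\, \pnorm{\rm col}{\mtx{H}} \right].
$$

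The next step is to bound the two norms of $\mtx{H}$ using only $\norm{\mtx{A}}$. Since each column of $\mtx{A}$ is a unit vector, $\norm{\mtx{A}} \geq 1$, so the eigenvalues of $\mtx{H} = \mtx{A}^{\adj}\mtx{A} - \Id$ lie in $[-1,\ \norm{\mtx{A}}^{2} - 1]$, and therefore $\norm{\mtx{H}} \leq \norm{\mtx{A}}^{2} = n/\strank(\mtx{A})$. Combining this with the elementary bound $\pnorm{\infty\to1}{\mtx{H}} \leq n\,\norm{\mtx{H}}$ gives $\pnorm{\infty\to1}{\mtx{H}} \leq n^{2}/\strank(\mtx{A})$. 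For the auxiliary norm, the $j$th column of $\mtx{H}$ is $\mtx{A}^{\adj}\vct{a}_{j} - \onevct_{j}$, and using $(\mtx{A}^{\adj}\vct{a}_{j})_{j} = \enormsq{\vct{a}_{j}} = 1$ one finds $\enormsq{\mtx{A}^{\adj}\vct{a}_{j} - \onevct_{j}} = \enormsq{\mtx{A}^{\adj}\vct{a}_{j}} - 1 \leq \norm{\mtx{A}}^{2}$, so summing over the $n$ columns gives $\pnorm{\rm col}{\mtx{H}} \leq n\,\norm{\mtx{A}} = n^{3/2}/\sqrt{\strank(\mtx{A})}$.

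Finally I would substitute $\delta = s/n$ and write $r = \strank(\mtx{A})$. The two terms become $\delta^{2}\,\pnorm{\infty\to1}{\mtx{H}} \leq s^{2}/r$ and $\delta^{3/2}\,\pnorm{\rm col}{\mtx{H}} \leq s^{3/2}/\sqrt{r}$, so that
$$
\Expect \pnorm{\infty\to1}{\mtx{P}_{\delta}\mtx{H}\mtx{P}_{\delta}} \leq 2\cnst{C}\, s \left[ \frac{s}{r} + 2\sqrt{\frac{s}{r}} \right].
$$
One may assume $s \geq 2$, since for $s = 1$ the matrix $\mtx{P}_{\delta}\mtx{H}\mtx{P}_{\delta}$ reduces to a single (vanishing) diagonal entry of $\mtx{H}$; then $\lceil \cnst{c}\, r \rceil \geq 2$ forces $\cnst{c}\, r > 1$, hence $s \leq \lceil \cnst{c}\, r\rceil < 2\cnst{c}\, r$ and $s/r < 2\cnst{c}$. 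The bracket is then at most $2\cnst{c} + 2\sqrt{2\cnst{c}}$, and choosing the universal constant $\cnst{c}$ small enough makes $2\cnst{C}\,(2\cnst{c} + 2\sqrt{2\cnst{c}}) \leq 1/9$, which is the claimed bound.

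The only part I expect to require genuine care is the pair of estimates in the second paragraph: everything rests on observing that $\norm{\mtx{H}}$ --- and hence both $\pnorm{\infty\to1}{\mtx{H}}$ and $\pnorm{\rm col}{\mtx{H}}$ --- is controlled by $\norm{\mtx{A}}^{2} = n/\strank(\mtx{A})$, which is exactly what turns the Rudelson--Vershynin bound (stated for arbitrary matrices) into a statement governed by the stable rank. The remaining ingredients --- the Poissonization factor of $2$, the simplifications from hollowness and Hermiticity, the choice of $\cnst{c}$, and the harmless ceiling in the hypothesis --- are routine.
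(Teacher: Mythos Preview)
Your proposal is correct and follows essentially the same route as the paper: Poissonization, then Rudelson--Vershynin with the simplifications from $\diag(\mtx{H})=\mtx{0}$ and $\mtx{H}^{\adj}=\mtx{H}$, then the two norm estimates $\pnorm{\infty\to1}{\mtx{H}}\leq n\norm{\mtx{A}}^{2}$ and $\pnorm{\rm col}{\mtx{H}}\leq n\norm{\mtx{A}}$, and finally $\delta\leq \cnst{c}/\norm{\mtx{A}}^{2}$ with $\cnst{c}$ chosen small. Your treatment of the ceiling and the $s=1$ edge case is in fact more careful than the paper's, which simply absorbs these issues into the choice of $\cnst{c}$.
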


\begin{proof}
Suppose $\mtx{A}$ is a standardized matrix with $n$ columns, and define its $n \times n$ hollow Gram matrix $\mtx{H}$.  Observe that the $(\infty, 1)$ norm of $\mtx{H}$ satisfies the bound
$$
\pnorm{\infty\to1}{\mtx{H}}
	\leq n \norm{\mtx{H}}
	\leq n \max\{ \norm{\mtx{A}^\adj \mtx{A}} - 1, 1 \}
	\leq n \normsq{\mtx{A}}.
$$
Meanwhile, the $\pnorm{\rm col}{\cdot}$ norm satisfies
$$
\pnorm{\rm col}{\mtx{H}} < \pnorm{\rm col}{ \mtx{A}^\adj \mtx{A} }
	= \sum\nolimits_j \enorm{ \mtx{A}^\adj \vct{a}_j }
	\leq n \norm{ \mtx{A} }.
$$
These facts play a central role in the calculation.

To continue, invoke the Poissonization result, Proposition~\ref{prop:poisson}, which yields
$$
\Expect \pnorm{\infty\to1}{\mtx{P}_{\delta} \mtx{H} \mtx{P}_{\delta}}
	\leq 2 \pnorm{\infty\to1}{\mtx{R}_{\delta} \mtx{H} \mtx{R}_{\delta}}.
$$
Theorem~\ref{thm:rv-inf1} provides that
$$
\Expect \pnorm{\infty\to1}{\mtx{P}_{\delta} \mtx{H} \mtx{P}_{\delta}}
	\leq \cnst{C} \left[ \delta^2 \pnorm{\infty\to1}{\mtx{H}}
		+ \delta^{3/2} \pnorm{\rm col}{\mtx{H}}  \right]
$$
where we have applied the facts that $\mtx{H}$ is Hermitian and has a zero diagonal.  The two norm bounds result in additional simplifications:
$$
\pnorm{\infty\to1}{\mtx{P}_{\delta} \mtx{H} \mtx{P}_{\delta} }
	\leq \cnst{C} \left[ \delta^2 n \normsq{\mtx{A}}
		+ \delta^{3/2} n \norm{\mtx{A}} \right] \\
	= \cnst{C} s 
	\left[ \delta \normsq{\mtx{A}} + \delta^{1/2} \norm{\mtx{A}} \right].
$$
Since $\mtx{A}$ has unit-norm columns, $\strank(\mtx{A}) = n / \normsq{\mtx{A}}$.  As a result, $\delta = s/n \leq \cnst{c} / \normsq{\mtx{A}}$.  By fixing a sufficiently small constant $\cnst{c}$, we can ensure that
$$
\pnorm{\infty\to1}{\mtx{P}_{\delta} \mtx{H} \mtx{P}_{\delta}} \leq \frac{s}{9},
$$
the advertised bound.
\end{proof}

\section{Entropic Mirror Descent}
	\label{app:emd}
	
The algorithms for the Kashin--Tzafriri theorem and the Bourgain--Tzafriri theorem both require the solution to a convex minimization problem over the probability simplex.
It is important to have a practical algorithm for approaching these optimizations.  To that end, we briefly describe a simple, elegant method called Entropic Mirror Descent \cite{BT03:Mirror-Descent}. We then explain how to apply this technique to the specific objective functions that arise in our work.

\subsection{Convex analysis}

Let $\mathbb{E}$ be a Euclidean space, i.e., a vector space equipped with a real-linear inner product.  Let $\Omega$ be a convex subset of $\mathbb{E}$, and consider a convex function $J : \Omega \to \Rspace{}$.
The \term{subdifferential} $\partial J(\vct{f})$ contains each vector $\vct{\theta} \in \mathbb{E}^\adj$ that satisfies the inequalities
$$
J(\vct{h}) - J(\vct{f})  \geq \ip{ \vct{\theta} }{ \vct{h} - \vct{f} }
	\quad\text{for all $\vct{h} \in \Omega$.}
$$
The elements of the subdifferential are called \term{subgradients}.  They describe the directions and rates of ascent of the function $J$ at the point $\vct{f}$.  When $J$ is differentiable at $\vct{f}$, the gradient is the unique subgradient.


The \term{Lipschitz constant} of the function $J$ with respect to a norm $\triplenorm{\cdot}$ is defined to be the least number $L$ for which
$$
\abs{ J( \vct{h} ) - J( \vct{f} ) }
	\leq L \triplenorm{ \vct{h} - \vct{f} }
\quad\text{for all $\vct{h}, \vct{f} \in \Omega$.}
$$
It can be shown~\cite[Thm.~24.7]{Roc70:Convex-Analysis} that
$$
L = \sup\{ \triplenorm{ \vct{\theta} }_{\adj} :
	\vct{\theta} \in \partial J(\vct{f}), \ \vct{f} \in \Omega \}.
$$
where $\triplenorm{\cdot}_{\adj}$ is the dual norm.


\subsection{Interior subgradient methods}

Consider the (nonsmooth) convex program
$$
\min \ J(\vct{f})
\quad\subjto\quad
\vct{f} \in \Omega.
$$
Subgradient information can be used to solve this problem, but caution is necessary because the negative subgradient is not necessarily a direction of descent.  As a result, subgradient methods are typically \emph{nonmonotone}, which means that the value of the objective function can (and often will) increase.  
It is also common for subgradient methods to produce iterates outside the constraint set.  The classical remedy is to project each iterate back onto the constraint set.  This idea succeeds, but it leads to zigzagging phenomena.

Interior subgradient methods \cite{BT03:Mirror-Descent} are designed to eliminate some of the problematic behavior that projected subgradient methods exhibit.  To develop an interior subgradient method, we need a divergence measure that is tailored to the constraint set.
At each iteration, we perform two steps:
\begin{enumerate}
\item	At the current iterate $\vct{f}$, compute a subgradient $\vct{\theta} \in \partial J(\vct{f})$ to linearize the objective function:
$$
J(\vct{h}) \approx J(\vct{f}) + \ip{\vct{\theta}}{\vct{h} - \vct{f}}
$$

\item	Penalize the linearization with the divergence $D(\cdot; \vct{f})$ from the current iterate, scaled by a (large) parameter $\beta^{-1}$.  Minimize this auxiliary function to produce a new iterate $\vct{f}'$:
$$
\vct{f}' \in \argmin_{\vct{h} \in \Omega} \left\{ J(\vct{f}) + \ip{ \vct{\theta} }{ \vct{h} - \vct{f} } + \beta^{-1} D( \vct{h}; \vct{f} ) \right\}.
$$
\end{enumerate}
The divergence penalty serves two purposes.  First, it ensures that the next iterate is close to the previous iterate, which is essential because the linearization is only useful locally.  Second, it simultaneously prevents the iterates from getting too close to the boundary of the constraint set.  With a careful choice of the parameter $\beta$, we can guarantee progress toward the optimum set, at least on average.

\subsection{Optimization on the probability simplex}

The Entropic Mirror Descent (EMD) algorithm of Beck and Teboulle~\cite{BT03:Mirror-Descent} is a specific instance of the interior subgradient method that is designed for minimizing convex functions over the probability simplex, the set defined by
$$
\Delta_s =
	\{ \vct{f} \in \Rspace{s} : \trace(\vct{f}) = 1, \ \vct{f} \geq \vct{0} \}.
$$
A natural divergence measure for this set is the relative entropy function:
$$
D(\vct{h}; \vct{f}) = \sum\nolimits_{j=1}^s
	h_j \log\left(\frac{h_j}{f_j}\right).
$$
An amazing feature of the resulting interior subgradient method is that the optimization in the second step has a closed form:
$$
h_j = \frac{f_j \exp( - \beta \theta_j )}
	{\sum\nolimits_j f_j \exp(-\beta \theta_j)}
\quad\text{for $j = 1, 2, \dots, s$}.
$$
Algorithm~\ref{alg:emd} describes the procedure that arises from these choices.
Beck and Teboulle have established an elegant efficiency estimate~\cite[Thm.~4.2]{BT03:Mirror-Descent} for this method.

\begin{thm}[Efficiency of EMD] \label{thm:emd}
Let $J : \Delta_s \to \Rspace{}$ be a convex function whose Lipschitz constant with respect to the $\ell_1$ norm is $L$.  The approximate minimizer $\vct{f}$ generated by Algorithm~\ref{alg:emd} satisfies
$$
J( \vct{f} ) - J(\vct{f}_{\star})
	\leq  \sqrt{\frac{2L^2\log s}{T}}
$$
where $\vct{f}_{\star}$ is a minimizer of $J$.
\end{thm}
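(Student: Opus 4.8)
The plan is to run the standard mirror-descent analysis specialized to the entropic geometry, following Beck and Teboulle~\cite{BT03:Mirror-Descent}. Write $\vct{f}_1, \vct{f}_2, \dots$ for the iterates, with $\vct{f}_1$ the uniform distribution on $\{1, \dots, s\}$ and with $\vct{\theta}_t \in \partial J(\vct{f}_t)$ the subgradient used at step $t$; each update has the closed form displayed above, with step-size parameter $\beta$. Because $L$ is the Lipschitz constant of $J$ with respect to $\pnorm{1}{\cdot}$, the subdifferential characterization recalled in Section~\ref{app:emd} gives $\infnorm{\vct{\theta}_t} \leq L$ for every $t$, since $\ell_\infty$ is the norm dual to $\ell_1$. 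This uniform bound on the subgradients is all we need about $J$ besides convexity.

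The heart of the argument is a one-step inequality. First I would record two structural facts about the relative entropy $D(\cdot\,;\cdot)$ on $\Delta_s$: it is $1$-strongly convex with respect to $\pnorm{1}{\cdot}$, which is exactly Pinsker's inequality $\pnorm{1}{\vct{h} - \vct{f}}^2 \leq 2 D(\vct{h}; \vct{f})$; and the proximal update $\vct{f}_{t+1} = \argmin_{\vct{h} \in \Delta_s}\{\ip{\vct{\theta}_t}{\vct{h}} + \beta^{-1} D(\vct{h}; \vct{f}_t)\}$ obeys the three-point property
$$
\ip{\vct{\theta}_t}{\vct{f}_{t+1} - \vct{u}} \leq \beta^{-1}\bigl[ D(\vct{u}; \vct{f}_t) - D(\vct{u}; \vct{f}_{t+1}) - D(\vct{f}_{t+1}; \vct{f}_t) \bigr] \quad\text{for all } \vct{u} \in \Delta_s,
$$
which follows from the first-order optimality condition for the update together with the Bregman cosine identity. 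Splitting $\ip{\vct{\theta}_t}{\vct{f}_t - \vct{u}} = \ip{\vct{\theta}_t}{\vct{f}_t - \vct{f}_{t+1}} + \ip{\vct{\theta}_t}{\vct{f}_{t+1} - \vct{u}}$ and bounding the first piece by H\"older followed by Young's inequality and Pinsker,
$$
\ip{\vct{\theta}_t}{\vct{f}_t - \vct{f}_{t+1}} \leq \tfrac{\beta}{2}\infnorm{\vct{\theta}_t}^2 + \tfrac{1}{2\beta}\pnorm{1}{\vct{f}_t - \vct{f}_{t+1}}^2 \leq \tfrac{\beta}{2}L^2 + \beta^{-1} D(\vct{f}_{t+1}; \vct{f}_t),
$$
the $D(\vct{f}_{t+1}; \vct{f}_t)$ terms cancel, and I am left with $\ip{\vct{\theta}_t}{\vct{f}_t - \vct{u}} \leq \tfrac{\beta}{2}L^2 + \beta^{-1}[D(\vct{u}; \vct{f}_t) - D(\vct{u}; \vct{f}_{t+1})]$.

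Then I would set $\vct{u} = \vct{f}_{\star}$, sum over $t = 1, \dots, T$ so the divergence terms telescope, drop the nonnegative $D(\vct{f}_{\star}; \vct{f}_{T+1})$, and apply convexity in the form $J(\vct{f}_t) - J(\vct{f}_{\star}) \leq \ip{\vct{\theta}_t}{\vct{f}_t - \vct{f}_{\star}}$ to reach
$$
\sum_{t=1}^{T}\bigl(J(\vct{f}_t) - J(\vct{f}_{\star})\bigr) \leq \tfrac{T\beta}{2}L^2 + \beta^{-1} D(\vct{f}_{\star}; \vct{f}_1) \leq \tfrac{T\beta}{2}L^2 + \beta^{-1}\log s,
$$
where the last step uses that $\vct{f}_1$ is uniform, so $D(\vct{u}; \vct{f}_1) = \log s + \sum_j u_j \log u_j \leq \log s$. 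Returning the average iterate $\vct{f} = T^{-1}\sum_t \vct{f}_t$ (Jensen gives $J(\vct{f}) \leq T^{-1}\sum_t J(\vct{f}_t)$), or equivalently the best iterate, yields $J(\vct{f}) - J(\vct{f}_{\star}) \leq \tfrac{\beta}{2}L^2 + \tfrac{\log s}{\beta T}$, and choosing $\beta = \sqrt{2\log s/(L^2 T)}$ to balance the two terms produces exactly $\sqrt{2 L^2 \log s / T}$.

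The main obstacle is really just assembling the two geometric facts about the entropic setup — the $\ell_1$-strong convexity of the relative entropy (Pinsker) and the three-point property of the exponentiated-gradient update — since everything after that is telescoping and a one-variable optimization over $\beta$. Both facts are classical, so in practice I would simply cite Beck and Teboulle~\cite[Thm.~4.2]{BT03:Mirror-Descent} for the details and present the skeleton above as the explanation.
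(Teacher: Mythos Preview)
Your proposal is correct and is precisely the Beck--Teboulle argument that the paper invokes; note that the paper does not supply its own proof of this theorem but simply cites \cite[Thm.~4.2]{BT03:Mirror-Descent}, so your sketch in fact goes further than the paper does. The only minor mismatch is that Algorithm~\ref{alg:emd} uses the per-step adaptive $\beta = \sqrt{2\log s /(T\infnorm{\vct{\theta}}^2)}$ and returns the best iterate rather than the average, whereas your analysis fixes $\beta = \sqrt{2\log s /(L^2 T)}$; since $\infnorm{\vct{\theta}_t}\leq L$ and $\min_t J(\vct{f}_t)\leq T^{-1}\sum_t J(\vct{f}_t)$, the same bound follows, and Beck--Teboulle treat both variants.
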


Algorithm~\ref{alg:emd} succeeds with a wide range of step sizes.  In particular, when the total number $T$ of iterations is unknown, we may compute the step size using the current iteration number $t$:
$$
\beta = \sqrt{ \frac{2\log s}{t \infnorm{\vct{\theta}}^2} }.
$$
This choice increases the right-hand side of the efficiency estimate by a logarithmic factor.

\begin{algorithm}[tb]
\caption{Entropic Mirror Descent} \label{alg:emd}
\centering \fbox{
\begin{minipage}{.9\textwidth} 
\vspace{4pt}
\algname{Emd}{$J$, $s$, $T$}
\alginout{Objective function $J$, dimension $s$, number $T$ of iterations}
{Approximate minimizer $\vct{f}$ of $J$}
\vspace{8pt}\hrule\vspace{8pt}

\begin{algtab}
$\vct{f}^{(1)} = s^{-1} \onevct$ 
	\hfill \{ Initialize with uniform density \} \\ 
\algfor{$t = 1$ {\bf to} $T$}
	Find $\vct{\theta} \in \partial J(\vct{f}^{(t)})$
	\hfill \{ Compute subgradient \} \\
	$\beta = \sqrt{ \frac{2 \log s}{ T \infnorm{\vct{\theta}}^2} }$
	\hfill \{ Compute step size \} \\
	$\vct{h} = \vct{f}^{(t)} \cdot \exp( - \beta \vct{\theta} )$
	\hfill \{ Reweight current iterate \} \\
	$\vct{f}^{(t+1)} = \vct{h} / \trace(\vct{h})$
	\hfill \{ Rescale to obtain next iterate \} \\
\algend {\bf end} {\it for} \\
Return $\vct{f} \in \argmin_t J(\vct{f}^{(t)})$ \\
\end{algtab}
\vspace{6pt}
\end{minipage}}
\end{algorithm}


\subsection{Pietsch factorization via EMD}

Suppose $\mtx{B}$ is a matrix with $s$ columns.  We can rephrase the Pietsch factorization problem \eqref{eqn:pietsch-opt} as an optimization over the probability simplex.  Define the linear operator
$$
\diag : \Rspace{s} \to \Rspace{s \times s}
$$
that maps vectors to diagonal matrices in the obvious way.  We can write the convex program as
\begin{equation} \label{eqn:J-fn}
\min \
\lambda_{\max}( \mtx{B}^\adj\mtx{B} - \alpha^2 \diag(\vct{f}) )
\quad\subjto\quad
\vct{f} \in \Delta_s.
\end{equation}
Abbreviate the objective function $J : \Delta_s \to \Rspace{}$.  We can evidently apply EMD to complete the optimization once we find a way to compute subgradients.

We use methods from the convex analysis of Hermitian matrices to determine the subdifferential of the objective function~\cite{Lew96:Convex-Analysis}.  Let $\mtx{A}$ be an Hermitian matrix.  Then
$$
\partial \lambda_{\max}(\mtx{A}) =
	\conv\{ \vct{u} \vct{u}^\adj :
		\mtx{A}\vct{u} = \lambda_{\max}(\mtx{A}) \vct{u}, \
		\enorm{\vct{u}} = 1 \}.
$$
In words, the subdifferential of the maximum eigenvalue function at $\mtx{A}$ is the convex hull of all rank-one projectors whose range lies in the top eigenspace of $\mtx{A}$.  According to~\cite[Thm.~23.9]{Roc70:Convex-Analysis}, we have
$$
\partial J(\vct{f}) = \{ - \alpha^2 \diag^{\adj}( \mtx{\Theta} ) :
	\mtx{\Theta} \in \partial \lambda_{\max}(\mtx{B}^\adj \mtx{B} - \alpha^2 \diag(\vct{f}) ) \}.
$$
where the adjoint map $\diag^{\adj} : \Rspace{s \times s} \to \Rspace{s}$ extracts the diagonal of a matrix.  In particular, we may construct a subgradient $\vct{\theta} \in \partial J(\vct{f})$ from a normalized maximal eigenvector $\vct{u}$ of the matrix $\mtx{B}^\adj \mtx{B} - \alpha^2 \diag(\vct{f})$ using the formula
$$
\vct{\theta} =
	- \alpha^2 \diag^{\adj}( \vct{u} \vct{u}^\adj ) = - \alpha^2 \abssq{\vct{u}}
$$
where $\abssq{\cdot}$ denotes the componentwise squared magnitude of a vector.

In summary, we can evaluate the objective function $J(\vct{f})$ and simultaneously obtain a subgradient $\vct{\theta} \in \partial J(\vct{f})$ from an eigenvector calculation plus some lower-order operations.  Note that the standard methods for producing a single eigenvector, such as the Lanczos algorithm and its variants \cite[Ch.~9]{GvL96:Matrix-Computations}, require access to the matrix only through its action on vectors.  It is therefore preferable in some settings---for example, when $\mtx{B}$ is sparse---not to form the matrix $\mtx{B}^\adj \mtx{B}$.

Eigenvector computation is a primitive in every numerical linear package, so it is reasonable to assume that high-precision eigenvectors are available.  In any case, slight variants of EMD will work with approximate subgradients, provided they are computed to sufficient precision.  A simple analysis supporting this claim does not seem to be available in the optimization literature, but see \cite[Ch.~6]{Kal07:Efficient-Algorithms} for related work.

We can bound the Lipschitz constant of $J$ with respect to the $\ell_1$ norm just by considering subgradients of the form $\vct{\theta} = - \alpha^2 \abssq{\vct{u}}$ because their convex hull yields the complete subdifferential.  Since the eigenvector $\vct{u}$ is normalized,
$$
\infnorm{ \vct{\theta} } = \alpha^2 \max\nolimits_j \abssq{u_j} \leq \alpha^2,
$$
we determine that the Lipschitz constant $L \leq \alpha^2$.  According to Theorem~\ref{thm:emd}, the EMD algorithm ostensibly requires $\widetilde{\bigO}(\alpha^4)$ iterations to deliver a solution to \eqref{eqn:J-fn} with constant precision.  In practice, far fewer iterations suffice.


\begin{rem}
The application of EMD to \eqref{eqn:J-fn} closely resembles the multiplicative weights method \cite[Ch.~6]{Kal07:Efficient-Algorithms} for solving the {\sc maxcut} problem \eqref{eqn:pietsch-dual}.  Indeed, the two algorithms are substantially identical, except for the specific choice of step sizes and the method for constructing the final solution from the sequence of iterates.  The efficiency estimates are also similar, except that the multiplicative weights method uses the widths of the constraints in lieu of the Lipschitz constant.  EMD appears to be more effective in practice because it exploits the geometry of the problem more completely.
\end{rem}

\subsection{Grothendieck factorization via EMD}

Suppose $\mtx{G}$ is an $s \times s$ Hermitian matrix.  The Grothendieck factorization problem \eqref{eqn:groth-opt} can be expressed as solving
$$
\min \ \lambda_{\max}
	\begin{bmatrix} -\alpha \diag(\vct{f}) & \mtx{G} \\
		\mtx{G} & - \alpha \diag(\vct{f}) \end{bmatrix}
\quad\subjto\quad
\vct{f} \in \Delta_s.
$$
Abbreviate the objective function $J : \Delta_s \to \Rspace{}$.  Once again, EMD is an appropriate technique.

We may obtain subgradients using the same methods as before.  Compute a normalized, maximal eigenvector of the matrix:
$$
\begin{bmatrix} -\alpha \diag(\vct{f}) & \mtx{G} \\
		\mtx{G} & - \alpha \diag(\vct{f}) \end{bmatrix}
\begin{bmatrix} \vct{u} \\ \vct{v} \end{bmatrix}
	= J(\vct{f}) \begin{bmatrix} \vct{u} \\ \vct{v} \end{bmatrix}
\quad\text{where}\quad
\enormsq{\vct{u}} + \enormsq{\vct{v}} = 1.
$$
Then a subgradient $\vct{\theta} \in \partial J(\vct{f})$ can be obtained from the formula
$$
\vct{\theta} = - \alpha \left( \abssq{\vct{u}} + \abssq{\vct{v}} \right).
$$
The Lipschitz constant $L \leq \alpha$, so the number of iterations of EMD is apparently $\widetilde{\bigO}(\alpha^2)$.  Of course, the eigenvector calculations can be streamlined by exploiting the structure of the matrix.

\section*{Acknowledgments}

The author thanks Ben Recht for helpful discussions about eigenvalue minimization.

\bibliographystyle{alpha}
\bibliography{sparse}

\begin{thebibliography}{BDM08}

\bibitem[Ali95]{Ali95:Interior-Point-Methods}
F.~Alizadeh.
\newblock Interior-point methods in semidefinite programming with applications
  to combinatorial optimization.
\newblock {\em SIAM J. Optimization}, 5(1):13--51, Feb. 1995.

\bibitem[AN04]{AN04:Approximating-Cut}
N.~Alon and A.~Naor.
\newblock Approximating the cut norm via {G}rothendieck's inequality.
\newblock In {\em Proc. 36th Ann. ACM Symposium on Theory of Computing (STOC)},
  pages 72--80, Chicago, 2004.

\bibitem[BDM08]{BDM08:Selecting-Exactly}
C.~Boutsidis, P.~Drineas, and M.~Mahoney.
\newblock On selecting exactly $k$ columns from a matrix.
\newblock Submitted for publication, 2008.

\bibitem[BT87]{BT87:Invertibility-Large}
J.~Bourgain and L.~Tzafriri.
\newblock Invertibility of ``large'' submatrices with applications to the
  geometry of {B}anach spaces and harmonic analysis.
\newblock {\em Israel J. Math}, 57(2):137--224, 1987.

\bibitem[BT91]{BT91:Problem-Kadison-Singer}
J.~Bourgain and L.~Tzafriri.
\newblock On a problem of {K}adison and {S}inger.
\newblock {\em J. reine angew. Math.}, 420:1--43, 1991.

\bibitem[BT03]{BT03:Mirror-Descent}
A.~Beck and M.~Teboulle.
\newblock Mirror descent and nonlinear projected subgradient methods for convex
  optimization.
\newblock {\em Operations Res. Lett.}, 31:167--175, 2003.

\bibitem[GE96]{GE96:Efficient-Algorithms}
M.~Gu and S.~Eisenstat.
\newblock Efficient algorithms for computing a strong rank-revealing {QR}
  factorization.
\newblock {\em SIAM J. Sci. Comput.}, 17(4):848--869, Jul. 1996.

\bibitem[GVL96]{GvL96:Matrix-Computations}
G.~H. Golub and C.~F. Van~Loan.
\newblock {\em Matrix Computations}.
\newblock Johns Hopkins Univ. Press, 3rd edition, 1996.

\bibitem[GW95]{GW95:Improved-Approximation}
M.~X. Goemans and D.~P. Williamson.
\newblock Improved approximation algorithms for maximum cut and satisfiability
  problems using semidefinite programming.
\newblock {\em J. Assoc. Comput. Mach.}, 42:1115--1145, 1995.

\bibitem[HJ85]{HJ85:Matrix-Analysis}
R.~A. Horn and C.~R. Johnson.
\newblock {\em Matrix Analysis}.
\newblock Cambridge Univ. Press, 1985.

\bibitem[Kal07]{Kal07:Efficient-Algorithms}
S.~Kale.
\newblock {\em Efficient algorithms using the multiplicative weights update
  method}.
\newblock {P}h.{D}. dissertation, {C}omputer {S}cience {D}ept., Princeton
  Univ., Princeton, Nov. 2007.

\bibitem[Lew96]{Lew96:Convex-Analysis}
A.~S. Lewis.
\newblock Convex analysis on the {H}ermitian matrices.
\newblock {\em SIAM J. Optimization}, 6:164--177, 1996.

\bibitem[LO96]{LO96:Eigenvalue-Optimization}
A.~S. Lewis and M.~L. Overton.
\newblock Eigenvalue optimization.
\newblock {\em Acta Numerica}, 5:149--190, 1996.

\bibitem[LT91]{LT91:Probability-Banach}
M.~Ledoux and M.~Talagrand.
\newblock {\em Probability in Banach Spaces: Isoperimetry and Processes}.
\newblock Springer, 1991.

\bibitem[Pis86]{Pis86:Factorization-Linear}
G.~Pisier.
\newblock {\em Factorization of linear operators and geometry of Banach
  spaces}.
\newblock Number~60 in CBMS Regional Conference Series in Mathematics. AMS,
  Providence, 1986.
\newblock Reprinted with corrections, 1987.

\bibitem[Roc70]{Roc70:Convex-Analysis}
R.~T. Rockafellar.
\newblock {\em Convex Analysis}.
\newblock Princeton Univ. Press, 1970.

\bibitem[Roh00]{Roh00:Computing-Norm}
J.~Rohn.
\newblock Computing the norm $\| {A} \|_{\infty,1}$ is {NP}-hard.
\newblock {\em Linear and Multilinear Algebra}, 47:195--204, 2000.

\bibitem[RV07]{RV07:Sampling-Large}
M.~Rudelson and R.~Vershynin.
\newblock Sampling from large matrices: {A}n approach through geometric
  functional analysis.
\newblock {\em J. Amer. Comput. Soc.}, 54(4):Article 21, pp. 1--19, Jul. 2007.

\bibitem[Sza90]{Sza90:Spaces-Large}
S.~Szarek.
\newblock Spaces with large distance from $\ell^n_\infty$ and random matrices.
\newblock {\em Amer. J. Math.}, 112(6):899--942, Dec. 1990.

\bibitem[Tro08]{Tro08:Linear-Independence}
J.~A. Tropp.
\newblock On the linear independence of spikes and sines.
\newblock {\em J. Fourier Anal. Appl.}, 2008.
\newblock To appear.

\bibitem[Ver01]{Ver01:Johns-Decompositions}
R.~Vershynin.
\newblock Johns decompositions: Selecting a large part.
\newblock {\em Israel J. Math.}, 122:253--277, 2001.

\bibitem[Ver06]{Ver05:Random-Sets}
R.~Vershynin.
\newblock {\em High Dimensional Probability}, volume~51 of {\em IMS Lecture
  Notes---Monograph Series}, chapter Random sets of isomorphism of linear
  operators on Hilbert space, pages 148--154.
\newblock IMS, 2006.

\end{thebibliography}

\end{document}